\newtheorem{theo}{Theorem} 
\newtheorem{mainconj}[theo]{Conjecture} 
\newtheorem{lemma}{Lemma}[section]
\newtheorem{prop}[lemma]{Proposition}
\newtheorem{theoint}[lemma]{Theorem}
\newtheorem{claim}[lemma]{Claim}
\theoremstyle{remark}
\newtheorem{remark}[lemma]{Remark}
\theoremstyle{definition}
\newtheorem{defi}[lemma]{Definition}
\newcommand{\lin}{\textsc{l}}
\newcommand{\NN}{\mathbb{N}}
\newcommand{\RR}{\mathbb{R}}
\newcommand{\eps}{\varepsilon}
\newcommand{\CCC}{\mathcal{C}}
\newcommand{\DDD}{\mathcal{D}}
\newcommand{\HHH}{\mathcal{H}}
\newcommand{\OOO}{\mathcal{O}}
\newcommand{\PPP}{\mathcal{P}}
\newcommand{\RRR}{\mathcal{R}}
\newcommand{\SSS}{\mathcal{S}}
\newcommand{\tmu}{\tilde{\mu}}
\newcommand{\indic}{1\!\!1}
\newcommand{\vU}{\vec{U}}
\newcommand{\hdot}{\dot{H}^1}
\newcommand{\EMPH}[1]{\medskip\noindent\textit{#1}.}
\DeclareMathOperator{\supp}{supp}
\newcommand{\profiles}{$\left(U^j_{\lin},\left\{t_{j,n},x_{j,n},\lambda_{j,n}\right\}_n\right)_{j\geq 1}$}
\newcommand{\ds}{\displaystyle}
\numberwithin{equation}{section} 
\title[Scattering profile for the energy-critical wave equation]{Scattering profile for global solutions of the energy-critical wave equation}
\author[T.~Duyckaerts]{Thomas Duyckaerts$^1$}
\author[C.~Kenig]{Carlos Kenig$^2$}
\author[F.~Merle]{Frank Merle$^3$}
\thanks{$^1$LAGA, Universit\'e Paris 13, Sorbonne Paris Cit\'e, UMR 7539. Partially supported by ERC Grant Blowdisol 291214 and French ANR Grant SchEq ANR-12-JS01-0005-01}
\thanks{$^2$University of Chicago. Partially supported by NSF Grants DMS-1265429 and DMS-1463746}
\thanks{$^3$Cergy-Pontoise (UMR 8088), IHES. Partially supported by ERC Grant Blowdisol 291214}
\date{\today}
\begin{document}

\begin{abstract}
 Consider the focusing energy-critical wave equation in space dimension $3$, $4$ or $5$. We prove that any global solution which is bounded in the energy space converges in the exterior of wave cones to a radiation term which is a solution of the linear wave equation. 
\end{abstract}

\maketitle

\section{Introduction}
In this note we consider the energy-critical nonlinear wave equation on $\RR^N$, $N\in \{3,4,5\}$:
 \begin{equation}
  \label{NLW}
  \left\{\begin{aligned}
  \partial_t^2u-\Delta u-|u|^{\frac{4}{N-2}}u&=0\\
(u,\partial_t u)_{\restriction t=0}&=(u_0,u_1)\in \hdot\times L^2,
\end{aligned}\right.
\end{equation}
where $\hdot=\hdot(\RR^N)$, $L^2=L^2(\RR^N)$, and $u$ is real-valued.
If $f$ is a function of space and time, we will denote $\vec{f}=(f,\partial_tf)$. It is known that the equation \eqref{NLW} is locally well posed in $\hdot\times L^2$ and that the energy
$$E(\vec{u}(t))=\frac 12 \int |\nabla u(t,x)|^2\,dx+\frac 12\int |\partial_tu(t,x)|^2\,dx-\frac{N-2}{2N}\int |u(t,x)|^{\frac{2N}{N-2}}\,dx$$
is conserved.

All solutions of the defocusing equation (\eqref{NLW} with a $+$ instead of a $-$ sign in front of the nonlinearity) are global and scatter to a solution of the linear equation (see e.g. \cite{Struwe88,Grillakis90b,GiSoVe92,Grillakis92,ShSt93,Kapitanski94,ShSt94,BaSh98,Nakanishi99b}). The dynamics of the focusing equation \eqref{NLW} is richer: small data solutions are global and scatter, however blow-up in finite time may occur \cite{Levine74}. Global, non-scattering solutions also exist. Examples of such solutions are given by solutions of the elliptic equation:
\begin{equation}
 \label{Ell}
 -\Delta Q=|Q|^{\frac{4}{N-2}}Q,\quad Q\in \dot{H}^{1},
\end{equation} 
 (see \cite{Ding86} for the existence of such solutions) and their Lorentz transforms
\begin{equation}
\label{travelling_waves}
Q_{\ell}(t,x)=Q\left(\left(-\frac{t}{\sqrt{1-|\ell|^2}}+\frac{1}{|\ell|^2} \left(\frac{1}{\sqrt{1-|\ell|^2}}-1\right)\ell\cdot x\right)\ell+x\right)=Q_{\ell}(0,x-t\ell)
\end{equation}
where $\ell \in \RR^N$, $|\ell|<1$. Note that 
$$Q_{\ell}(t,x)=Q_{\ell}(0,x-t\ell),$$
so  that $Q_{\ell}$ is a solitary wave traveling at speed $|\ell|$. The energy of $Q_{\ell}$ is given by:
\begin{equation}
\label{EQl}
E(\vec{Q}_{\ell}(0))=\frac{1}{\sqrt{1-|\ell|^2}}E(\vec{Q}(0))\underset{|\ell|\to 1}{\longrightarrow}+\infty. 
\end{equation} 
It is conjectured that any bounded, global solution of \eqref{NLW} is a sum of modulated, decoupled traveling waves and a scattering part. More precisely:
\begin{mainconj}[Soliton resolution]
\label{C:E1}
 Let $u$ be a solution of \eqref{NLW} on $[0,+\infty)\times \RR^N$ such that
 \begin{equation}
  \label{E2}
  \sup_{t\geq 0}\left\|\vec{u}(t)\right\|_{\hdot\times L^2}<\infty.
 \end{equation} 
 Then there exist a solution $v_{\lin}$ of the linear wave equation
\begin{equation}
 \label{E3}
 \left\{\begin{aligned}
( \partial_t^2-\Delta)v_{\lin}&=0\\
\vec{v}_{\lin\restriction_{t=0}}&=(v_0,v_1)\in \hdot\times L^2
\end{aligned}\right.
\end{equation} 
an integer $J\geq 0$, and for $j\in \{1,\ldots,J\}$, a (nonzero) traveling wave $Q_{\ell_j}^j$ ($|\ell_j|<1$), and parameters $x_j(t)\in \RR^N$, $\lambda_j(t)\in \RR^N$  such that
\begin{equation}
\label{expansion}
\lim_{t\to+\infty}
 \vec{u}(t)-\vec{v}_{\lin}(t)-\sum_{j=1}^J \left(\frac{1}{\lambda_j(t)^{\frac{N-2}{2}}}Q_{\ell_j}^j\left(0,\frac{\cdot-x_j(t)}{\lambda_j(t)}\right),\frac{1}{\lambda_j(t)^{\frac{N}{2}}}\partial_t Q_{\ell_j}^j\left(0,\frac{\cdot-x_j(t)}{\lambda_j(t)}\right)\right)=0
\end{equation} 
in $\hdot\times L^2$ and 
\begin{gather*}
 \forall j\in \{1,\ldots,J\},\quad \lim_{t\to\infty}\frac{x_j(t)}{t}=\ell_j,\quad \lim_{t\to\infty}\frac{\lambda_j(t)}{t}=0\\
 \forall j,k\in \{1,\ldots,J\},\quad j\neq k\Longrightarrow \lim_{t\to+\infty}\frac{|x_j(t)-x_k(t)|}{\lambda_j(t)}+\frac{\lambda_j(t)}{\lambda_k(t)}+\frac{\lambda_k(t)}{\lambda_j(t)}=+\infty.
\end{gather*}
\end{mainconj}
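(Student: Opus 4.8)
Since the statement to be proved is the soliton resolution conjecture itself --- at present a major open problem, known only for radial data or along sequences of times --- what follows is necessarily a program rather than a complete argument. The plan is to organize it around two blocks: \emph{extraction of the radiation} and \emph{rigidity for the compact remainder}, the second being the obstacle.

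\emph{First block: the radiation $v_{\lin}$.} I would start from the boundedness \eqref{E2} and finite speed of propagation to show that, outside every forward light cone $\{|x|>t-A\}$, the solution is asymptotically free: localizing the Duhamel formula and the Strichartz estimates to such an exterior region, and using that $\vec u$ cannot concentrate there, one gets that $t\mapsto\vec u(t)$ restricted to $\{|x|>t-A\}$ is Cauchy as $t\to+\infty$. Letting $A\to+\infty$ and invoking the linear existence theory would produce a solution $v_{\lin}$ of \eqref{E3} with $\|\vec u(t)-\vec v_{\lin}(t)\|_{\hdot\times L^2(|x|>t-A)}\to 0$ for each $A$. Setting $\vec w(t):=\vec u(t)-\vec v_{\lin}(t)$, I would then record the interior-concentration statement: for every $\eps>0$ there is $A$ with $\limsup_{t\to+\infty}\|\vec w(t)\|_{\hdot\times L^2(|x|>t-A)}<\eps$. (This is the part of the conjecture closest to the results announced in the present note.)

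\emph{Second block, preliminary step: profile decomposition of the interior.} Given $t_n\to+\infty$, apply a Bahouri--Gérard profile decomposition to $\vec w(t_n)$, writing it as $\sum_{j=1}^{J}\big(\lambda_{j,n}^{-\frac{N-2}{2}}U^j_{\lin}(0,\tfrac{\cdot-x_{j,n}}{\lambda_{j,n}}),\ \lambda_{j,n}^{-\frac{N}{2}}\partial_t U^j_{\lin}(0,\tfrac{\cdot-x_{j,n}}{\lambda_{j,n}})\big)$ plus a small remainder, with pairwise orthogonal parameters. Interior concentration forces $\lambda_{j,n}/t_n\to 0$ and $x_{j,n}/t_n\to\ell_j$ with $|\ell_j|\le 1$; a channels-of-energy argument (see below) would rule out $|\ell_j|=1$, so $|\ell_j|<1$. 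A profile whose associated nonlinear solution $U^j_{\NL}$ scattered would already have been absorbed into $v_{\lin}$; hence each $U^j_{\NL}$ is a global, bounded, non-scattering solution of \eqref{NLW} whose trajectory, after the orthogonality reductions, is precompact in $\hdot\times L^2$ modulo the scaling--translation group.

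\emph{Second block, core step: rigidity.} The crux is the claim that such a compact nonzero solution is, up to a Lorentz transform and a rescaling, a stationary solution of \eqref{Ell}, i.e.\ equals some $Q_\ell$. Here the plan is to invoke the Duyckaerts--Kenig--Merle exterior-energy lower bound --- in odd dimension a nonzero free wave sends, for all $t\ge 0$ or all $t\le 0$, at least a fixed fraction of its energy into $\{|x|>|t|\}$ --- : if the compact solution were not a traveling wave, linearizing and combining this dispersive lower bound with a virial/Morawetz identity would produce a net outgoing energy flux, contradicting compactness, while the self-similar and multi-soliton alternatives are excluded using the variational characterization of $Q$ and the energy formula \eqref{EQl}. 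Passing to the limit then gives $U^j_{\NL}=Q^j_{\ell_j}$ up to symmetries; summing energies (orthogonality, together with $E(\vec Q^j_{\ell_j})>0$) forces $J<\infty$ and the remainder to $0$, so \eqref{expansion} holds along $t_n$. One would finally upgrade to the full limit $t\to+\infty$ by a continuity/no-return argument and read off $x_j(t)/t\to\ell_j$, $\lambda_j(t)/t\to 0$ and the separation of $(x_j,\lambda_j)$ from an ODE analysis of the modulation equations. Everything difficult lives in this step, and that is precisely where the conjecture is open: in even dimension $N=4$ the exterior-energy lower bound \emph{fails} on a nontrivial subspace of data, so the dispersive mechanism must be replaced (by working along well-chosen times, or by a finer analysis of the nonlinear flux), which is why only the sequential version is currently available there; and even for $N=3,5$, moving solitons with Lorentz speeds possibly clustering near $1$, the exclusion of a degenerate continuum of scales, and control of the modulation parameters for \emph{all} times require a delicate induction on the energy level in the spirit of Kenig--Merle, plus coercivity of the linearized operator around the multi-soliton configuration. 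A substitute for channels of energy in even dimension and a full-time no-return lemma are the real sticking points.
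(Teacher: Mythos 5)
You have correctly identified that this statement is a conjecture, not a theorem: the paper does not prove it, and says so explicitly (the only established case is $N=3$ radial, via \cite{DuKeMe13}; sequential-in-time versions go through \cite{DuJiKeMe16P}). So there is no proof in the paper against which to check your argument, and a program is the honest answer here. Two remarks are nonetheless in order.

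Your \emph{first block} is where the content of the present paper lives, and it is also where your sketch is too quick. You write that localizing Duhamel and Strichartz to $\{|x|>t-A\}$ and ``using that $\vec{u}$ cannot concentrate there'' yields convergence to a free wave. But the non-concentration near the forward light cone is precisely the substance of Theorem~\ref{T:E1}, and it is \emph{not} a soft consequence of finite speed of propagation and Strichartz. The paper must introduce a critical threshold $\overline{A}$, decompose $S^{N-1}$ into regular and singular directions, show by a quantitative Strichartz and profile-decomposition argument that the singular set is finite, and then eliminate each singular direction via a virial/flux identity encoding that nonlinear energy cannot travel at unit speed. Stating non-concentration as an input rather than an output is circular: it assumes exactly what the paper proves. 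Your parenthetical that this block is ``closest to the results of the note'' is right, but the mechanism you give for it is essentially what one would try first and is known to be insufficient without the singular-direction and virial machinery.

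Your \emph{second block} correctly locates the open obstacles: rigidity of compact-up-to-symmetries solutions, failure of the exterior-energy lower bound on a subspace when $N=4$, full-time (as opposed to sequential) no-return, and control of modulation parameters when Lorentz speeds may approach $1$. This matches both the literature and the paper's own stance that Theorem~\ref{T:E1} is only a first step toward the conjecture, intended to feed the sequential analysis of \cite{DuJiKeMe16P}. No objection there beyond the fact that it remains, as you say, a program.
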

This conjecture was proved by the authors in \cite{DuKeMe13} for radial solutions in space dimension $N=3$ (in this case $x_j(t)\equiv 0$ and the only stationary solutions in the expansion \eqref{expansion} are $W$ and $-W$, where
$$W(x)=\frac{1}{\left(1+\frac{|x|^2}{3}\right)^{1/2}}.$$
In this article we extract the linear profile $v_{\lin}(t)$ which appears in the expansion \eqref{expansion}. More precisely, we prove:
\begin{theo}
\label{T:E1}
 Let $u$ be a solution of \eqref{NLW} on $[0,+\infty)\times \RR^N$ that satisfies \eqref{E2}.
Then there exists a solution $v_{\lin}$ of the linear wave equation \eqref{E3} such that, for all $A\in \RR$,
\begin{equation}
 \label{E4}
 \lim_{t\to+\infty} \int_{|x|\geq t+A}|\nabla(u-v_{\lin})(t,x)|^2+|\partial_t(u-v_{\lin})(t,x)|^2+\frac{1}{|x|^2}|u(t,x)|^2+|u(t,x)|^{\frac{2N}{N-2}}\,dx=0.
\end{equation} 
\end{theo}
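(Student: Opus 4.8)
The plan is to construct $v_{\lin}$ as the "radiation field" of $u$ near the light cone, using the fact that outside of (slightly enlarged) wave cones the nonlinear solution $u$ behaves, in a suitable weighted sense, like a free wave. The first step is to establish a \emph{channels-of-energy}-type lower bound, or rather a decay statement: for the solution $u$ of \eqref{NLW} bounded in the energy space, the exterior energy
$$
\int_{|x|\ge t+A}|\nabla u(t,x)|^2+|\partial_t u(t,x)|^2+\frac{|u(t,x)|^2}{|x|^2}+|u(t,x)|^{\frac{2N}{N-2}}\,dx
$$
is bounded uniformly in $t$ (this is immediate from \eqref{E2} and Hardy's inequality/Sobolev) and, crucially, that the nonlinear term is negligible in the exterior region. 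The key estimate I would aim for is that for every $A$,
$$
\int_{t}^{+\infty}\int_{|x|\ge s+A}|u(s,x)|^{\frac{2N}{N-2}}\,dx\,\frac{ds}{?}\ \text{is controlled,}
$$
or more precisely a finite-speed-of-propagation statement showing the Strichartz norm of $u$ restricted to $\{|x|\ge t+A\}$ is finite. The mechanism for this is that traveling waves $Q_{\ell_j}^j$ concentrate along rays $x\approx t\ell_j$ with $|\ell_j|<1$, hence lie \emph{strictly inside} the cone $|x|=t+A$ for $A$ fixed and $t$ large; thus the "non-scattering" part of $u$ does not reach the exterior region, and the remainder scatters.

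Second, with the nonlinearity controlled in the exterior, I would compare $u$ with the free evolution. Fix $A$ and let $w$ solve the linear wave equation with data $\vec{u}(t_n)$ at an increasing sequence $t_n\to\infty$ (or use the monotonicity of exterior energy for differences of solutions under finite speed of propagation). The difference $u-w$ solves an equation driven by the nonlinear term $|u|^{\frac{4}{N-2}}u$, which by the first step has small Strichartz norm on $\{|x|\ge s+A\}$ for $s\ge t_n$; a standard perturbation/Strichartz argument then gives that $\vec{u}(t)-\vec w(t)\to 0$ in the exterior energy norm as $t\to\infty$. To produce a \emph{single} limiting profile $v_{\lin}$ independent of the sequence, I would use that the free evolutions $\vec w^{(n)}(t)$ with data $\vec u(t_n)$ are Cauchy in the exterior-energy sense: $\|\vec w^{(n)}(t)-\vec w^{(m)}(t)\|$ in the region $\{|x|\ge t+A\}$ is, by finite speed of propagation, bounded by $\|\vec u(t_n)-\vec w^{(m)}(t_n)\|$ on $\{|x|\ge t_n+A\}$, which tends to $0$. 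One then defines $v_{\lin}$ on $\{|x|\ge t+A\}$ as the limit; a diagonal argument over $A\in\ZZ$ and the finite speed of propagation make the definition consistent, yielding a global linear solution $v_{\lin}$ (its data may only be defined outside a ball, but finite speed of propagation lets one patch, or one simply works with the exterior restriction as in the statement).

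Third, I would upgrade the convergence of the gradient and time-derivative to the full expression in \eqref{E4} including the Hardy term $|x|^{-2}|u|^2$ and the potential term $|u|^{2N/(N-2)}$: the former follows from the $\dot H^1$ convergence by Hardy's inequality applied on the exterior region (where $1/|x|\le 1/(t+A)$ gives extra smallness of the cross terms involving $v_{\lin}$, while Hardy controls the rest), and the latter from Sobolev embedding $\dot H^1\hookrightarrow L^{2N/(N-2)}$ together with the already-established smallness of the exterior Strichartz norm of $u$.

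The main obstacle I anticipate is the first step — proving that the exterior Strichartz norm of $u$ is finite for each $A$, i.e. that \emph{all} the "soliton-like" or otherwise non-dispersive behavior of a general bounded solution stays strictly inside every cone $|x|=t+A$. For a general (non-radial, $N=3,4,5$) solution one does not have the soliton resolution \eqref{expansion} available, so one cannot literally say "$u$ is traveling waves plus scattering"; instead I expect the proof to rely on a channels-of-energy argument in the exterior region à la Duyckaerts--Kenig--Merle, exploiting the explicit exterior lower bounds for the free wave equation in dimensions $3,4,5$, combined with a virial/monotonicity identity to show that any energy lingering in the exterior must radiate. Making this rigorous — in particular handling the even dimension $N=4$ where exterior energy estimates for the free equation are more delicate, and controlling the nonlinear interaction uniformly in $A$ — is where the real work lies.
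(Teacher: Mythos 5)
Your steps 2 and 3 — comparing $u$ to free evolutions launched at times $t_n$, patching by finite speed of propagation, and upgrading convergence to include the Hardy and $L^{\frac{2N}{N-2}}$ terms — match the proof of Proposition \ref{P:E5} in the paper fairly closely (the paper uses radiation fields to show that the exterior free solutions $v_{\lin}^A$ all agree outside the cone, which plays the role of your ``Cauchy in exterior energy'' argument). But you have correctly flagged that essentially all the difficulty lies in your Step 1, and the mechanism you propose for it is not the one that works, and in fact cannot work as stated.

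You suggest establishing that the exterior Strichartz norm $\|u\|_{S(\{|x|\ge t+A\})}$ is finite for \emph{every} $A$ by ``channels-of-energy'' lower bounds for the free wave in dimensions $3,4,5$. This is a dead end: the exterior energy lower bound for the free wave fails for generic data already in dimension $N=4$ (only one half of the data is controlled in even dimensions), and the paper does \emph{not} use it. The paper's approach to Step~1 is entirely different and considerably more structured. One argues by contradiction and defines $\overline{A}$ as the infimum of the $A$'s for which the exterior Strichartz norm is finite (this is finite for large $A$ by small-data theory and finite speed of propagation, so $\overline{A}<\infty$; if the theorem failed one would have $\overline{A}\in\RR$). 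One then introduces the angular decomposition of the sphere $S^{N-1}$ into \emph{regular} directions (near which the Strichartz norm stays finite slightly inside the cone $|x|=t+\overline{A}$) and \emph{singular} directions, and shows via a delicate geometric argument combined with Strichartz estimates that the set of singular directions is finite and nonempty (Proposition~\ref{P:E7}). Next, a profile decomposition of $\vec u(t_n)$ along a sequence of times, together with localization constraints on the parameters (Lemmas~\ref{L:E10}, \ref{L:E11}), shows that a nontrivial linear profile must concentrate at scale $\lambda_{1,n}\to 0$ at the point $(\overline{A}+t_n)\omega$ for each singular direction $\omega$. The decisive step is then a localized virial/Morawetz-type identity in a ball of radius $\alpha$ travelling along the null direction $(t, (\overline A+t)e_1)$ (Proposition~\ref{P:E12}), which produces times $t_n'$ at which the quantity $\|\vec u(t_n')\|_{e_1}^2 := \|\partial_t u+\partial_{x_1}u\|_{L^2}^2 + \sum_{k\ge 2}\|\partial_{x_k}u\|_{L^2}^2$ (restricted to that ball) is small. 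Since this $e_1$-norm is conserved by the free flow, is orthogonal across profiles, and its smallness forces a small Strichartz norm (Claim~\ref{Cl:E15}), the extracted profile is forced to have small $S(\RR)$-norm, contradicting its nontriviality. That contradiction eliminates the singular directions and hence $\overline{A}$, closing the argument.

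In short: your outline is correct about where the real work lies, and your intuition (finite-energy ``solitons'' cannot linger near the light cone, shown by a virial identity) is exactly the right one, but you propose to implement it via free exterior-energy lower bounds, which fail in $N=4$ and are not used. The actual implementation requires the contradiction set-up with $\overline{A}$, the regular/singular angular decomposition, a profile decomposition tied to the cone $|x|=t+\overline{A}$, the $e_1$-norm adapted to the outgoing null direction, and a virial identity localized on a travelling ball. None of these ingredients appears in your proposal, so what you have is a plausible plan missing its central engine.
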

We see Theorem \ref{T:E1} as a first step toward the proof of Conjecture \ref{C:E1}. It will be used in an article by Hao Jia and the authors to prove a weak form of the conjecture, i.e. that the expansion \eqref{expansion} holds for a sequence of times $t_n\to+\infty$ (see \cite{DuJiKeMe16P}). 
Note that Theorem \ref{T:E1} implies that the following limit exists:
$$\lim_{A\to-\infty} \lim_{t\to+\infty} \int_{|x|\geq t+A} \frac{1}{2}|\nabla u(t,x)|^2+\frac{1}{2}(\partial_tu(t,x))^2-\frac{N-2}{2N}|u(t,x)|^{\frac{2N}{N-2}}\,dx=E_{\lin}(v_0,v_1),$$
where 
$$E_{\lin}(\vec{v}_{\lin}(t))=\frac{1}{2}\int |\nabla v_{\lin}(t,x)|^2+(\partial_tv_{\lin}(t,x))^2\,dx$$
is the conserved energy for the linear wave equation. One can also prove (using for example the profile decomposition of \cite{BaGe99} recalled in \S \ref{SS:profiles}), as a consequence of Theorem \ref{T:E1}:
\begin{equation}
\label{weaklim}
\vec{S}_{\lin}(-t) \vec{u}(t) \xrightharpoonup[t+\infty]{}(v_0,v_1),
\end{equation} 
where $\vec{S}_{\lin}$ denotes the linear evolution (see \S \ref{SS:lin} below). 

We next mention a few related works. Theorem \ref{T:E1} is proved in \cite{DuKeMe12b} in the radial case in dimension $3$ (where it is significantly simpler). A very close proof yields Theorem \ref{T:E1} for radial solutions of \eqref{NLW} in higher dimensions (see e.g. \cite{CoKeLaSc15P}) and of the defocusing analogue of \eqref{NLW} with an additional linear potential \cite{JiaLiuXu14P}, still in the radial setting. It is proved in \cite{CoKeLaSc15b} by an adaptation of the proof of \cite{DuKeMe12b}. Related results for energy-critical, mass-supercritical Schr\"odinger equations were proved by T.~Tao in \cite{Tao04DPDE,Tao07DPDE,Tao08DPDE}.

\medskip

Let us give a short outline of the paper. We first prove (Section \ref{S:large_wave_cone}), as a consequence of small data theory and finite speed of propagation, that \eqref{E4} holds for large positive $A$. We then argue by contradiction, assuming that \eqref{E4} does not hold for all $A\in \RR$, and defining $\overline{A}$ as the largest real number such that \eqref{E4} does not hold. We divide (see Section \ref{S:sing_reg}) the elements of the sphere $S^{N-1}$ between \emph{regular directions} (in an angular neighbourhood of which \eqref{E4} holds locally, for some $A<\overline{A}$), and \emph{singular directions} (other elements of $S^{N-1}$), and prove, using geometrical considerations and again small data theory and finite speed of propagation, that the set of singular directions is finite. To conclude the proof, we show in Sections \ref{S:cond_profiles}, \ref{S:virial} and \ref{S:end_of_proof} that the set of singular directions is empty, which will contradict the definition of $\overline{A}$. The 
core of the proof of this fact is in Section \ref{S:virial}, where we prove, using virial type identities, that there are no nonlinear profiles remaining close to the wave cone $\{t=|x|\}$. This is coherent with the intuition that nonlinear objects with finite energy travel at a speed strictly slower than $1$, as the traveling waves \eqref{travelling_waves} and their energies \eqref{EQl} suggest. 
Let us also mention that we never need to know, in all this proof, what happens inside the wave cone (that is for $(t,x)$ such that $|t|-|x|\gg 1$). This is of course made possible by finite speed of propagation.

In Section \ref{S:preliminaries}, we give some preliminary results on linear and nonlinear wave equations. We introduce in particular an isometry between the initial data and the asymptotic profile of a solution of the linear wave equation that is known (see e.g. the work of Friedlander \cite{Friedlander80}) but seems to have been somehow forgotten. We construct this isometry (that we use many times in the article) in Appendices \ref{S:radiation} and \ref{S:funct} for the sake of completeness.
\section{Preliminaries and notations}
\label{S:preliminaries}
\subsection{Linear wave equation}
\label{SS:lin}
If $(v_0,v_1)\in \hdot\times L^2$, we let 
$$ S_{\lin}(t)(v_0,v_1):=\cos(t\sqrt{-\Delta})v_0+\frac{\sin(t\sqrt{-\Delta})}{\sqrt{-\Delta}}v_1$$
the solution $v_{\lin}$ of \eqref{E3}, and 
$$\vec{S}_{\lin}(t)(v_0,v_1):=\vec{v}_{\lin}(t)=\left(S_{\lin}(t)(v_0,v_1),\frac{\partial}{\partial t}\left(S_{\lin}(t)(v_0,v_1)\right)\right).$$
The linear energy 
$$ E_{\lin}(\vec{v}_{\lin}(t))=\frac 12\int |\nabla v_{\lin}(t,x)|^2\,dx+\frac 12\int (\partial_t v_{\lin}(t,x))^2\,dx$$
is conserved.
We will often use radial coordinates, denoting, for $x\in \RR^N\setminus \{0\}$, $r=|x|>0$ and $\omega=x/|x|\in S^{N-1}$. We also denote
$$\partial_r=\frac{\partial}{\partial_r}=\frac{x}{|x|}\cdot\nabla,\quad \frac{1}{r}\nabla_{\omega}=\nabla- \frac{x}{|x|}\partial_r.$$
We will often use the following asymptotic property.
\begin{theoint}
\label{T:B2}
 Assume $N\geq 3$ and let $v_{\lin}$ be a solution of the linear wave equation \eqref{E3}. Then
 \begin{equation}
  \label{B4}
  \lim_{t\to+\infty} \left\|\frac{1}{r}\nabla_{\omega}v_{\lin}(t)\right\|_{L^2}+\left\|\frac 1r v_{\lin}(t)\right\|_{L^2}=0
 \end{equation} 
 and there exists a unique $G_+\in L^2(\RR\times S^{N-1})$ such that
 \begin{align}
  \label{B1}
  \lim_{t\to+\infty} \int_0^{+\infty} \int_{S^{N-1}} \left|r^{\frac{N-1}{2}} \partial_t v_{\lin}(t,r\omega)-G_+(r-t,\omega)\right|^2\,d\omega\,dr&=0\\
  \label{B1bis}
  \lim_{t\to+\infty} \int_0^{+\infty} \int_{S^{N-1}} \left|r^{\frac{N-1}{2}} \partial_r v_{\lin}(t,r\omega)+G_+(r-t,\omega)\right|^2\,d\omega\,dr&=0.
 \end{align} 
 Furthermore,
 \begin{equation} 
  \label{B2}
  E_{\lin}(v_0,v_1)=\int_{\RR\times S^{N-1}} |G_+(\eta,\omega)|^2d\eta d\omega=\left\|G_+\right\|^2_{L^2}
 \end{equation}
and the map
\begin{align*}
 (v_0,v_1)&\mapsto \sqrt{2}G_+\\
(\dot{H}^1\times L^2)(\RR^N)&\to L^2(\RR\times S^{N-1})
 \end{align*}
is a bijective isometry.
\end{theoint}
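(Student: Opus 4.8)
The plan is to establish everything first for a dense class of data, where all asymptotics can be computed explicitly, and then to pass to general data using the isometry built into \eqref{B2}. Let $D\subset\hdot\times L^2$ be the set of $(v_0,v_1)$ with $\widehat{v_0},\widehat{v_1}\in C^\infty_0(\RR^N\setminus\{0\})$; it is dense. Writing $\widehat{v_{\lin}}(t,\xi)=\cos(t|\xi|)\widehat{v_0}(\xi)+\frac{\sin(t|\xi|)}{|\xi|}\widehat{v_1}(\xi)$, passing to polar coordinates $\xi=\rho\sigma$ ($\rho>0$, $\sigma\in S^{N-1}$) and computing $\partial_tv_{\lin}(t,r\omega)$, $\partial_rv_{\lin}(t,r\omega)$, $v_{\lin}(t,r\omega)$, one is led to the spherical oscillatory integral $\int_{S^{N-1}}e^{ir\rho\,\omega\cdot\sigma}h(\sigma)\,d\sigma$. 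Its only stationary points are $\sigma=\pm\omega$ (where the tangential gradient of $\sigma\mapsto\omega\cdot\sigma$ vanishes), so stationary phase gives, uniformly for $\rho$ in a fixed compact subset of $(0,\infty)$,
\[
\int_{S^{N-1}}e^{ir\rho\,\omega\cdot\sigma}h(\sigma)\,d\sigma=(r\rho)^{-\frac{N-1}{2}}\big(c\,e^{ir\rho}h(\omega)+\bar c\,e^{-ir\rho}h(-\omega)\big)+O\big((r\rho)^{-\frac{N+1}{2}}\big),
\]
with an explicit constant $c=c(N)$.

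\textbf{Extracting the profile.} Inserting this and writing $\cos(t\rho),\sin(t\rho)$ in exponentials, $\partial_tv_{\lin}(t,r\omega)$ becomes $r^{-\frac{N-1}{2}}$ times a sum of oscillatory integrals in $\rho$ with phases $e^{\pm i\rho(r-t)}$ and $e^{\pm i\rho(r+t)}$, each against $\rho^{\frac{N-1}{2}}$ times a fixed $C^\infty_0((0,\infty))$ function of $\rho$ (and of $\omega$). Since the weight is exactly the polar Jacobian, $\|r^{\frac{N-1}{2}}f\|_{L^2(dr\,d\omega)}=\|f\|_{L^2(\RR^N)}$, Plancherel in $\rho$ shows that the $e^{\pm i\rho(r+t)}$ pieces have $L^2_{dr\,d\omega}$ mass concentrated in $\{r+t>t\}$, hence tending to $0$ as $t\to+\infty$, while the $O((r\rho)^{-(N+1)/2})$ error is $O(t^{-(N+1)/2})$ in $L^2_{dr\,d\omega}$ away from $r=0$ and harmless near $r=0$ by regularity of the data. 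Collecting the surviving $e^{\pm i\rho(r-t)}$ terms and using $\widehat{v_j}(-\rho\omega)=\overline{\widehat{v_j}(\rho\omega)}$ (reality), these two terms are complex conjugates and sum to $r^{-\frac{N-1}{2}}G_+(r-t,\omega)$ for an explicit real $G_+\in C^\infty_0(\RR\times S^{N-1})$ whose partial Fourier transform in the first variable equals, for $\rho>0$, a fixed nonzero multiple of $\rho^{\frac{N-1}{2}}\big(-i\rho\,\widehat{v_0}(\rho\omega)+\widehat{v_1}(\rho\omega)\big)$. This proves \eqref{B1} on $D$; the same computation for $\partial_rv_{\lin}$ (the radial derivative produces the factor $\pm i\rho$ at $\sigma=\pm\omega$, i.e. the opposite combination) gives \eqref{B1bis} with profile $-G_+$. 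Finally $v_{\lin}(t,r\omega)=-\int_r^\infty\partial_rv_{\lin}(t,\rho\omega)\,d\rho$ gains a power of $r$, so $\frac1rv_{\lin}(t)\to0$ in $L^2$, and differentiating the spherical expansion in $\omega$ gains decay because the phase is critical there, so $\frac1r\nabla_\omega v_{\lin}(t)\to0$ in $L^2$; this is \eqref{B4}.

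\textbf{Energy identity, extension, bijectivity.} On $D$, Plancherel in $\rho$ applied to the formula for $G_+$, together with $\int_{\RR^N}(|\xi|^2|\widehat{v_0}|^2+|\widehat{v_1}|^2)\,d\xi=(2\pi)^N(\|\nabla v_0\|_{L^2}^2+\|v_1\|_{L^2}^2)=2(2\pi)^NE_{\lin}(v_0,v_1)$, yields \eqref{B2}: the cross term between the $\widehat{v_0}$ and $\widehat{v_1}$ contributions is proportional to $\int_{\RR^N}|\xi|\,\im\big(\widehat{v_0}\,\overline{\widehat{v_1}}\big)\,d\xi$, which vanishes by parity, and the surviving constant is $1$ once the stationary-phase constant $c$ is tracked. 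Thus $T_0\colon(v_0,v_1)\mapsto\sqrt2\,G_+$ is an isometry from $D$ into $L^2(\RR\times S^{N-1})$; let $T$ be its continuous extension to $\hdot\times L^2$, an isometry with closed range. For arbitrary $(v_0,v_1)$ choose $(v_0^{(n)},v_1^{(n)})\in D$ converging to it; since $\|\partial_t(v_{\lin}-v_{\lin}^{(n)})(t)\|_{L^2(\RR^N)}^2\le2E_{\lin}(v_0-v_0^{(n)},v_1-v_1^{(n)})$ uniformly in $t$, a $3\eps$ argument shows $r^{\frac{N-1}{2}}\partial_tv_{\lin}(t,\cdot)\to\tfrac1{\sqrt2}\big(T(v_0,v_1)\big)(\cdot-t,\cdot)$ in $L^2$, and likewise for \eqref{B1bis}, \eqref{B4}; uniqueness of $L^2$ limits identifies $G_+:=\tfrac1{\sqrt2}T(v_0,v_1)$ as the unique function with \eqref{B1}. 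For surjectivity, since the range is closed it suffices to see it is dense: given a real $G$ with partial Fourier transform (in $s$) smooth in $\omega$ and compactly supported in $\RR\setminus\{0\}$ (such $G$ are dense), the formula above determines a compactly supported $a\in C^\infty_0(\RR^N\setminus\{0\})$; then $\widehat{v_1}(\xi)=\tfrac12\big(a(\xi)+\overline{a(-\xi)}\big)$ and $\widehat{v_0}(\xi)=\tfrac1{2i|\xi|}\big(\overline{a(-\xi)}-a(\xi)\big)$ define $(v_0,v_1)\in D$ with $T_0(v_0,v_1)=\sqrt2\,G$. Hence $T$ is a bijective isometry.

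\textbf{Main difficulty.} The crux is the stationary-phase step: one must control the errors strongly enough to get convergence in $L^2_{dr\,d\omega}$ rather than merely pointwise, and — the genuinely delicate point — separate the outgoing profile $G_+(r-t,\omega)$ from the ``$r+t$'' contribution, which does \emph{not} tend to $0$ pointwise but whose $L^2$ mass escapes to $r+t=+\infty$ and so vanishes in the limit. Keeping track of the exact constants and verifying the parity cancellation so that \eqref{B2} comes out with constant exactly $1$ also requires care, but once the profile formula is in hand this is bookkeeping.
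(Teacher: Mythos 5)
Your proposal is correct in outline but takes a genuinely different route from the paper's proof.

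The paper works with a different dense class --- initial data in $\left(C_0^\infty(\RR^N)\right)^2$ (compactly supported in \emph{space}, not in Fourier) --- and proves the key asymptotic expansion not by stationary phase but by the conformal (Kelvin-type) symmetry of the wave equation. Lemma~\ref{L:3} shows that for such data one can write $v(t,r\omega)=r^{-\frac{N-1}{2}}F\bigl(r-t,\omega,\tfrac1r\bigr)$ with $F$ \emph{smooth} up to $\sigma=0$, by exhibiting the pulled-back solution $w$ under the inversion and invoking finite speed of propagation; the radiation field is then simply $g(\eta,\omega)=F(\eta,\omega,0)$, and \eqref{B4}--\eqref{B1bis} become a Taylor expansion of $F$ in $\sigma$, with the $L^2$ part supplied by the Morawetz/Hardy decay of Claim~\ref{Cl:5}. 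This has the virtue of being insensitive to the parity of $N$ and of requiring no oscillatory-integral error bounds. Your route via $\widehat{v_0},\widehat{v_1}\in C_0^\infty(\RR^N\setminus\{0\})$ and stationary phase on $S^{N-1}$ is the other classical approach, and it has its own advantages: it produces the explicit relation between $G_+$ and $(\widehat{v_0},\widehat{v_1})$ (namely a constant multiple of $\rho^{\frac{N-1}{2}}\bigl(-i\rho\,\widehat{v_0}(\rho\omega)+\widehat{v_1}(\rho\omega)\bigr)$ on $\rho>0$), which makes both the Plancherel computation for \eqref{B2} and the surjectivity argument transparent, whereas the paper obtains the energy identity less directly from Lemma~\ref{L:6} plus energy conservation and proves surjectivity by constructing an ansatz and solving for the error term.

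One technical point you should make explicit: the pointwise stationary-phase error $O\bigl((r\rho)^{-(N+1)/2}\bigr)$ does not by itself give an $o(1)$ bound in $L^2(dr\,d\omega)$, because the surviving $r$-range is $r\approx t$ with measure $O(1)$ only after one knows the outgoing wave is concentrated near the light cone. For data in your class $D$ this is standard (it is the rapid decay in $\bigl||x|-t\bigr|$ of the kernel), but it is a separate input: you need to first truncate to $\{|r-t|\le R\}$ using this decay, apply stationary phase uniformly there, and only then let $t\to\infty$, $R\to\infty$. Similarly, your remark that the error is ``harmless near $r=0$'' deserves a line: on $\{r\le R_0\}$ both $r^{\frac{N-1}{2}}\partial_tv_{\lin}(t,\cdot)$ and $G_+(\cdot-t,\omega)$ tend to $0$ in $L^2$ as $t\to+\infty$, the former by dispersion of the nice data and the latter because the translate of an $L^2$ function escapes. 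Once these two points are filled in, the proof is complete and equivalent in strength to the paper's.
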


\begin{remark}
Using Theorem \ref{T:B2} on the solution $(t,x)\mapsto v_{\lin}(-t,x)$ of \eqref{E3}, we obtain that 
\begin{equation*}
  \lim_{t\to-\infty} \left\|\frac{1}{r}\nabla_{\omega}v_{\lin}(t)\right\|_{L^2}+\left\|\frac 1r v_{\lin}(t)\right\|_{L^2}=0,
 \end{equation*} 
 that there exists $G_-\in L^2(\RR\times S^{N-1})$ such that
\begin{align*}
  \lim_{t\to -\infty} \int_0^{+\infty} \int_{S^{N-1}} \left|r^{\frac{N-1}{2}} \partial_t v_{\lin}(t,r\omega)-G_-(t+r,\omega)\right|^2\,d\omega\,dr&=0\\
  \label{B1bis}
  \lim_{t\to-\infty} \int_0^{+\infty} \int_{S^{N-1}} \left|r^{\frac{N-1}{2}} \partial_r v_{\lin}(t,r\omega)-G_-(t+r,\omega)\right|^2\,d\omega\,dr&=0.
 \end{align*} 
 and that the map $(v_0,v_1)\mapsto \sqrt{2} G_-$ is a bijective isometry from $(\dot{H}^1\times L^2)(\RR^N)$ to $L^2(\RR\times S^{N-1})$
\end{remark}
We will call $G_+$ (respectively $G_-$) the \emph{radiation fields} associated to  $v_{\lin}$.  

Theorem \ref{T:B2} is known (see in particular the works of Friedlander \cite{Friedlander62,Friedlander80}). We give a proof in Appendices \ref{S:radiation} and \ref{S:funct} for the sake of completeness. Let us mention that the following identity for the conserved momentum is also available (but will not be used in this article):
$$\int \nabla v_0 v_1=-\int_{S^{N-1}\times \RR} \omega |G_+(\eta,\omega)|^2\,d\eta.$$

\subsection{Strichartz estimates}
 If $\Omega$ is a measurable subset of $\RR_t\times \RR^N_x$, of the form $\Omega=\bigcup_{t\in \RR}\{t\}\times \Omega_t$, $\Omega_t\subset \RR^N$ measurable, and $u$ a measurable function defined on $\Omega$, we denote
 \begin{equation}
  \label{E8}
  \|u\|_{S(\Omega)}:=\left( \int_{-\infty}^{+\infty}\left( \int_{\Omega_t} |u|^{\frac{2(N+2)}{N-2}}\,dx \right)^{\frac 12}\,dt \right)^{\frac{N-2}{N+2}}.
 \end{equation} 
 If $I\subset \RR$ is measurable, we will abuse notation, writing $S(I)$ for $S(I\times \RR^N)$.

The spaces $S(I)$ appear in the following Strichartz estimate (see \cite{GiVe95}): let $f \in L^1\left(\RR,L^2(\RR^N)\right)$, $(u_0,u_1)\in (\hdot\times L^2)(\RR^N)$, and 
   $$u(t)=S_{\lin}(t)(u_0,u_1)+ \int_{0}^t S_L(t-s)(0,f(s))\,ds$$
   the solution of
 \begin{equation}
   \label{E10}
( \partial_t^2-\Delta)u=f,\qquad 
\vec{u}_{\restriction_{t=0}}=(u_0,u_1)\in \hdot\times L^2.
\end{equation}
Then $u$ is well-defined, $\vec{u}\in C^0(\RR,\hdot \times L^2)$, $u\in S(\RR)$ and
$$\|u\|_{S(\RR)}+\sup_{t\in \RR} \|\vec{u}(t)\|_{\hdot\times L^2}\leq C\left(\|f\|_{L^1(\RR,L^2)}+\|(u_0,u_1)\|_{\hdot\times L^2}\right).$$
 We will use occasionally other Strichartz estimates: in the preceding inequality, one can replace $S(\RR)$ by $L^{\frac{2(N+1)}{N-2}}(\RR^{N+1})$, and also by $L^4(\RR,L^{12})$ (if $N=3$) or $L^2(\RR,L^{\frac{2N}{N-3}})$ (if $N\geq 4$).
 
 \subsection{Miscellaneous properties of the critical nonlinear wave equation}
 We recall the Cauchy theory for equation \eqref{NLW}:
 \begin{theo}
  \begin{enumerate}
   \item Small data theory: there exists $\delta_0>0$ such that if $I$ is an interval containing $0$ and $(u_0,u_1)\in \hdot\times L^2$ is such that $\|S_L(t)(u_0,u_1)\|_{S(I)}<\delta_0$, then there exists a unique solution of \eqref{NLW} $\vec{u}\in C^0(I,\hdot\times L^2)$. Furthermore
   $$\sup_{t\in I}\left\|\vec{u}(t)-\vec{S}_{\lin}(t)(u_0,u_1)\right\|_{\hdot\times L^2}+\left\|u-S_{\lin}(\cdot)(u_0,u_1)\right\|_{S(I)}\leq C\|S_{\lin}(\cdot)(u_0,u_1)\|^{\frac{N+2}{N-2}}_{S(I)}.$$
   \item If $(u_0,u_1)\in \hdot\times L^2$, there exists a unique maximal solution $u$ of \eqref{NLW}. Letting $(T_-,T_+)$ the maximal interval of existence of $u$, we have the following blow-up criterion:
   $$ T_+<\infty\Longrightarrow \|u\|_{S((0,T_+))}=+\infty.$$
   \item \label{scatter}If $u$ is a solution of \eqref{NLW} such that $u\in S\big((0,T_+)\big)$, then $T_+=+\infty$ and $u$ scatters for positive times: there exists a solution $u_{\lin}$ of \eqref{E3} such that 
   \begin{equation}
    \label{E9} \lim_{t\to+\infty}\left\|\vec{u}(t)-\vec{u}_{\lin}(t)\right\|_{\hdot\times L^2}=0.
   \end{equation} 
  \end{enumerate}
 \end{theo}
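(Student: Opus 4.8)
The plan is to run the standard Cauchy theory via the Strichartz estimates recalled above and a contraction-mapping argument (as in \cite{GiVe95}), treating the three parts in turn.

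\emph{Part (1).} The key algebraic fact is that $\|u\|_{S(I)}=\|u\|_{L^{(N+2)/(N-2)}_t L^{2(N+2)/(N-2)}_x(I\times\RR^N)}$, so that the nonlinearity $F(u):=|u|^{4/(N-2)}u$, which satisfies $|F(u)|=|u|^{(N+2)/(N-2)}$ pointwise, obeys the identity $\|F(u)\|_{L^1(I,L^2)}=\|u\|_{S(I)}^{(N+2)/(N-2)}$. One would set up the Duhamel map
$$\Phi(u)(t)=S_{\lin}(t)(u_0,u_1)+\int_0^t S_{\lin}(t-s)(0,F(u)(s))\,ds$$
on the ball $B_{\delta_0}=\{u:\|u-S_{\lin}(\cdot)(u_0,u_1)\|_{S(I)}\le\delta_0\}$. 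If $\|S_{\lin}(\cdot)(u_0,u_1)\|_{S(I)}<\delta_0$, then $\|u\|_{S(I)}<2\delta_0$ on $B_{\delta_0}$, and the inhomogeneous Strichartz estimate gives $\|\Phi(u)-S_{\lin}(\cdot)(u_0,u_1)\|_{S(I)}\le C\|u\|_{S(I)}^{(N+2)/(N-2)}\le C(2\delta_0)^{(N+2)/(N-2)}\le\delta_0$ for $\delta_0$ small (the exponent exceeds $1$), together with $\sup_{t\in I}\|\vec{\Phi}(u)(t)-\vec S_{\lin}(t)(u_0,u_1)\|_{\hdot\times L^2}\le C\|u\|_{S(I)}^{(N+2)/(N-2)}$. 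For contraction one uses $|F(a)-F(b)|\le C\bigl(|a|^{4/(N-2)}+|b|^{4/(N-2)}\bigr)|a-b|$, valid since $4/(N-2)\ge 1$ for $N\le 6$, and Hölder (with the exponents matched to the $S(I)$ norm as above) to obtain $\|\Phi(u)-\Phi(v)\|_{S(I)}\le C\delta_0^{4/(N-2)}\|u-v\|_{S(I)}\le\tfrac12\|u-v\|_{S(I)}$. The unique fixed point $u$ is the asserted solution; the quantitative bound in the statement is exactly the estimate on $\Phi(u)-S_{\lin}(\cdot)(u_0,u_1)$, $\vec u\in C^0(I,\hdot\times L^2)$ follows from strong continuity of $\vec S_{\lin}$ and dominated convergence in the Duhamel term, and uniqueness in the Strichartz class follows by the same difference estimate on a small subinterval plus a connectedness argument in $t$.

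\emph{Part (2).} For arbitrary data $(u_0,u_1)\in\hdot\times L^2$ one has $S_{\lin}(\cdot)(u_0,u_1)\in S(\RR)$, hence $\|S_{\lin}(\cdot)(u_0,u_1)\|_{S((-T,T))}\to 0$ as $T\to 0$ by dominated convergence; part (1) then produces a solution on some interval about $0$, and patching with uniqueness defines the maximal interval $(T_-,T_+)$. Suppose $T_+<\infty$ but $\|u\|_{S((0,T_+))}<\infty$. Partition $(0,T_+)$ into finitely many subintervals on each of which $\|u\|_S<\delta_0$; iterating the estimates of part (1) across them yields $\sup_{0<t<T_+}\|\vec u(t)\|_{\hdot\times L^2}<\infty$ and $\|F(u)\|_{L^1((0,T_+),L^2)}<\infty$. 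Since $\vec S_{\lin}(-t)\vec u(t)=(u_0,u_1)+\int_0^t\vec S_{\lin}(-s)(0,F(u)(s))\,ds$ and $\vec S_{\lin}(-s)$ is an isometry of $\hdot\times L^2$ (linear energy conservation), the right-hand side is Cauchy as $t\to T_+$, so $\vec u(t)$ converges to some $(w_0,w_1)\in\hdot\times L^2$; solving \eqref{NLW} with data $(w_0,w_1)$ at $t=T_+$ via part (1) extends $u$ beyond $T_+$, contradicting maximality. Hence $T_+<\infty\Rightarrow\|u\|_{S((0,T_+))}=+\infty$.

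\emph{Part (3).} If $u\in S\bigl((0,T_+)\bigr)$, part (2) forces $T_+=+\infty$. Exactly as in part (2), $\int_0^\infty\|\vec S_{\lin}(-s)(0,F(u)(s))\|_{\hdot\times L^2}\,ds=\int_0^\infty\|F(u)(s)\|_{L^2}\,ds=\|u\|_{S((0,\infty))}^{(N+2)/(N-2)}<\infty$, so
$$(v_0,v_1):=(u_0,u_1)+\int_0^{\infty}\vec S_{\lin}(-s)(0,F(u)(s))\,ds$$
is well-defined in $\hdot\times L^2$, and $\|\vec S_{\lin}(-t)\vec u(t)-(v_0,v_1)\|_{\hdot\times L^2}\le\int_t^\infty\|F(u)(s)\|_{L^2}\,ds\to 0$; applying the isometry $\vec S_{\lin}(t)$ gives \eqref{E9} with $u_{\lin}=S_{\lin}(\cdot)(v_0,v_1)$.

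The argument is classical, so there is no genuine obstacle: the two places that need a little care are the precise Strichartz exponents in part (1) — which, as noted, reduce to the identity relating $\|\cdot\|_{S(I)}$ to $\|F(u)\|_{L^1_tL^2_x}$ and the matching Hölder pairings for the difference estimate — and, in part (2), the bookkeeping producing the uniform $\hdot\times L^2$ bound on $(0,T_+)$ and the Cauchy property as $t\to T_+$. Since $N\in\{3,4,5\}$, the nonlinearity $z\mapsto|z|^{4/(N-2)}z$ is $C^1$ with the stated Lipschitz-type bound, so no low-regularity subtleties enter; alternatively one may simply invoke the Cauchy theory in the references already cited.
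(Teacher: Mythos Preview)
Your proposal is correct and follows the standard contraction-mapping route via Strichartz estimates, with the key identity $\|F(u)\|_{L^1_tL^2_x}=\|u\|_{S(I)}^{(N+2)/(N-2)}$ and the matching H\"older pairings all checked properly. The paper itself does not give a proof of this theorem at all: it simply cites \cite{KeMe08} for the Cauchy theory, and then remarks that part~(3) is a special case of Claim~\ref{Cl:E4} (the general fact that an $L^1_tL^2_x$ inhomogeneity forces convergence of $\vec S_{\lin}(-t)\vec u(t)$), which is exactly the mechanism you invoke in your part~(3). So your sketch is more detailed than what the paper provides, and is in line with the cited reference.
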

 (see \cite{KeMe08}). In the theorem, a \emph{solution} of \eqref{NLW} on $I$ is by definition a solution in the Duhamel sense which is in $S(J)$ for all $J\Subset I$.
 Point \eqref{scatter} can be seen as the consequence of the following result on the non-homogeneous linear wave equation, that we will use repeatedly in the paper:
 \begin{claim}
  \label{Cl:E4}
Let $f\in L^1(\RR,L^2(\RR^N))$ and $u$ the solution of \eqref{E10} (in the Duhamel sense). 
 Then, there exists a solution $v_{\lin}$ of \eqref{E3} such that
   \begin{equation}
    \label{E11}
\lim_{t\to\infty} \|\vec{u}(t)-\vec{v}_{\lin}(t)\|_{\hdot\times L^2}=0.
    \end{equation} 
\end{claim}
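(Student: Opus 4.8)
\textbf{Proof proposal for Claim \ref{Cl:E4}.}

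The plan is to reduce the claim to the asymptotic analysis of the Duhamel term, since the free part $S_{\lin}(t)(u_0,u_1)$ is already a solution of \eqref{E3} and contributes nothing to the error. Writing $u(t)=S_{\lin}(t)(u_0,u_1)+w(t)$ with $w(t)=\int_0^t S_{\lin}(t-s)(0,f(s))\,ds$, it suffices to produce data $(w_0,w_1)\in\hdot\times L^2$ such that $\vec w(t)-\vec S_{\lin}(t)(w_0,w_1)\to 0$ in $\hdot\times L^2$ as $t\to+\infty$; then $v_{\lin}=S_{\lin}(\cdot)(u_0+w_0,u_1+w_1)$ works. The natural candidate for the scattering data is obtained by conjugating with the (unitary on $\hdot\times L^2$) linear flow: set
\[
(w_0,w_1):=\int_0^{+\infty} \vec S_{\lin}(-s)(0,f(s))\,ds,
\]
and the task is to justify that this integral converges in $\hdot\times L^2$ and that it represents the limit of $\vec S_{\lin}(-t)\vec w(t)$.

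The key steps, in order, are as follows. First, by the group property of the linear flow, $\vec S_{\lin}(-t)\vec w(t)=\int_0^t \vec S_{\lin}(-s)(0,f(s))\,ds$. Second, since $\vec S_{\lin}(-s)$ is an isometry on $\hdot\times L^2$ and, by the energy estimate for the free wave equation, $\|\vec S_{\lin}(s)(0,g)\|_{\hdot\times L^2}\leq C\|g\|_{L^2}$ for any $g\in L^2$, we get $\|\vec S_{\lin}(-s)(0,f(s))\|_{\hdot\times L^2}\leq C\|f(s)\|_{L^2}$, which is in $L^1_s(\RR)$ by hypothesis. Hence the Bochner integral defining $(w_0,w_1)$ converges absolutely in $\hdot\times L^2$, and
\[
\left\|\vec S_{\lin}(-t)\vec w(t)-(w_0,w_1)\right\|_{\hdot\times L^2}
=\left\|\int_t^{+\infty}\vec S_{\lin}(-s)(0,f(s))\,ds\right\|_{\hdot\times L^2}
\leq C\int_t^{+\infty}\|f(s)\|_{L^2}\,ds\;\underset{t\to+\infty}{\longrightarrow}\;0
\]
by dominated convergence. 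Third, applying the isometry $\vec S_{\lin}(t)$ and using that it commutes with everything in sight, this yields $\|\vec w(t)-\vec S_{\lin}(t)(w_0,w_1)\|_{\hdot\times L^2}\to 0$. Adding back the free part of $u$ gives \eqref{E11} with $v_{\lin}=S_{\lin}(\cdot)\big((u_0,u_1)+(w_0,w_1)\big)$.

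There is no serious obstacle here; the only point requiring a little care is the justification that the Duhamel formula and the group property interact as claimed at the level of $\hdot\times L^2$-valued integrals, i.e. that $\vec S_{\lin}(-t)\int_0^t \vec S_{\lin}(t-s)(0,f(s))\,ds=\int_0^t \vec S_{\lin}(-s)(0,f(s))\,ds$ — this is immediate from linearity and the group law $\vec S_{\lin}(-t)\vec S_{\lin}(t-s)=\vec S_{\lin}(-s)$, together with the fact that $\vec S_{\lin}(-t)$ is bounded and so passes under the Bochner integral. Everything else is the elementary energy inequality for the inhomogeneous linear wave equation combined with the $L^1_t L^2_x$ hypothesis on $f$; no Strichartz estimate is even needed for this particular claim, although the same argument underlies point \eqref{scatter} of the Cauchy theory once one knows $u-S_{\lin}(\cdot)(u_0,u_1)$ solves \eqref{E10} with $f=|u|^{4/(N-2)}u\in L^1_tL^2_x$ on $(0,+\infty)$ when $u\in S((0,+\infty))$.
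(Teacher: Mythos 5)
Your proof is correct and is essentially the same argument the paper gives, just written out in full: the paper's one-line proof cites exactly the formula $\vec S_{\lin}(-t)\vec u(t)=(u_0,u_1)+\int_0^t \vec S_{\lin}(-s)(0,f(s))\,ds$ (which subsumes your splitting $u=S_{\lin}(\cdot)(u_0,u_1)+w$) together with energy estimates, and your steps simply unpack what "energy estimates" means here. The only cosmetic point is that $\vec S_{\lin}(t)$ is a genuine isometry on $\hdot\times L^2$, so $\|\vec S_{\lin}(-s)(0,f(s))\|_{\hdot\times L^2}=\|f(s)\|_{L^2}$ with constant $1$; invoking dominated convergence for the tail of an absolutely convergent integral is likewise harmless overkill.
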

\begin{proof}
The existence of $v_{\lin}$ follows from the formula
$$\vec{S}_{\lin}(-t)\vec{u}(t)=(u_0,u_1)+\int_0^t \vec{S}_{\lin}(-s)(0,f(s))\,ds$$
and energy estimates.
\end{proof}
We recall the \emph{finite speed of propagation} property: if $R>0$, $x_0\in \RR^N$ and $u$ is a solution of \eqref{E10} such that $(u_0,u_1)(x)=0$ for $|x-x_0|\leq R$, and $f(t,x)=0$ for $|x-x_0|\leq R-t$, $t\in [0,R]$, then $u(t,x)=0$ for $|x-x_0|\leq R-t$, $t\in [0,R]$. As a consequence, if the initial data of two solutions of \eqref{NLW} coincide for $|x-x_0|\leq R$, then the two solutions coincide for $t\in [0,R]$, $|x-x_0|<R-t$, if $t$ is in the domains of existence of both solutions. A consequence of finite speed of propagation and small data theory is the following claim:
\begin{claim}
 \label{Cl:FSP}
 There exists $\delta_1>0$ with the following property.
 Let $u$ be a solution of \eqref{NLW} such that $T_+(u)=+\infty$. Let $T>T_-(u)$, and $A>-T$. 
 \begin{enumerate}
  \item \label{I:NL_L}Assume $\|u\|_{S(\left\{|x|\geq A+t,\; t\geq T\right\})}=\delta<\delta_1.$
  Then 
  $\left\|S_{\lin}(\cdot-T)\vec{u}(T)\right\|_{S(\left\{|x|\geq A+t,\; t\geq T\right\})}\leq 2\delta.$
  \item \label{I:L_NL}Assume 
  $\left\|S_{\lin}(\cdot-T)\vec{u}(T)\right\|_{S(\left\{|x|\geq A+t,\; t\geq T\right\})}=\delta'<\delta_1.$
  Then
  $\|u\|_{S(\left\{|x|\geq A+t,\; t\geq T\right\})}\leq 2\delta'.$
 \end{enumerate}
\end{claim}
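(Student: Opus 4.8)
The plan is to obtain both implications as consequences of the small-data theory of \eqref{NLW} and the finite speed of propagation, combined with a continuity argument. I would first fix $\delta_1$ to be small enough that $\delta_1 < \delta_0$ (the small-data threshold) and that the Strichartz constant $C$ from the small-data estimate satisfies, say, $C(2\delta_1)^{\frac{4}{N-2}} < 1/2$ — the precise constraints will emerge from the estimates below, so I would state them only at the end. The key point is that the region $\Omega := \{|x|\ge A+t,\ t\ge T\}$ is forward-in-time a union of truncated exterior wave cones, hence respected by finite speed of propagation: by the finite speed of propagation property recalled just before the claim, the behaviour of $u$ on $\Omega$ depends only on $\vec u(T)$ restricted to $\{|x|\ge A+T\}$. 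So I may freely modify the initial data $\vec u(T)$ inside $\{|x| < A+T\}$ without changing either of the two $S(\Omega)$ norms appearing in the statement; in particular I may assume $\vec u(T)$ is supported in $\{|x|\ge A+T\}$, and by the standard existence theory (density plus the blow-up criterion) it suffices to prove the statement for such data.

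For part \eqref{I:NL_L}: assume $\|u\|_{S(\Omega)} = \delta < \delta_1$. Writing $v := S_{\lin}(\cdot - T)\vec u(T)$, the Duhamel formula gives $u = v + \int_T^{\cdot} S_{\lin}(\cdot - s)(0, |u|^{\frac{4}{N-2}}u(s))\,ds$ on $\Omega$. Applying the (inhomogeneous) Strichartz estimate, but localizing all space integrals to the slices $\Omega_t$ — which is legitimate because the retarded fundamental solution of the wave equation has support in the backward light cone, so the contribution to $v$ on $\Omega_t$ coming from the forcing term only involves the forcing on earlier slices $\Omega_s$, $s \le t$, again by finite speed of propagation — one gets
\[
\|v\|_{S(\Omega)} \le \|u\|_{S(\Omega)} + C\big\| |u|^{\frac{4}{N-2}}u \big\|_{L^1(\{t\ge T\},L^2(\Omega_t))} \le \delta + C\|u\|_{S(\Omega)}^{\frac{N+2}{N-2}} \le \delta + C\delta^{\frac{4}{N-2}}\delta \le 2\delta,
\]
the last inequality holding once $\delta_1$ is chosen with $C\delta_1^{4/(N-2)}\le 1$. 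Part \eqref{I:L_NL} is the reverse: assume $\|v\|_{S(\Omega)} = \delta' < \delta_1$, and run a bootstrap/continuity argument on the truncated regions $\Omega^{(\tau)} := \{|x|\ge A+t,\ T\le t\le \tau\}$. The function $\tau \mapsto \|u\|_{S(\Omega^{(\tau)})}$ is continuous and vanishes at $\tau = T$; as long as it is $\le 2\delta'$, the same localized Strichartz estimate gives $\|u\|_{S(\Omega^{(\tau)})} \le \|v\|_{S(\Omega^{(\tau)})} + C\|u\|_{S(\Omega^{(\tau)})}^{\frac{N+2}{N-2}} \le \delta' + C(2\delta')^{\frac{4}{N-2}}(2\delta') \le \frac{3}{2}\delta'$ (choosing $\delta_1$ so that $C(2\delta_1)^{4/(N-2)}\le 1/4$), strictly below $2\delta'$; a standard open-closed argument then shows $\|u\|_{S(\Omega^{(\tau)})}\le 2\delta'$ for all $\tau$, and letting $\tau\to\infty$ gives $\|u\|_{S(\Omega)}\le 2\delta'$. (The hypothesis $T_+(u)=+\infty$ is what lets $\tau$ range over all of $[T,\infty)$; alternatively the bound itself, via the blow-up criterion applied in the exterior region, reproves $T_+=+\infty$.)

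The main obstacle — really the only non-routine point — is justifying that the Strichartz estimate can be applied with all the spatial integrals cut down to the time-dependent domains $\Omega_t$, i.e. that one genuinely has a Strichartz inequality "on the exterior cone region" rather than on all of $\RR^{N+1}$. This is exactly where finite speed of propagation enters a second time: because $\Omega$ is a decreasing (in $t$, reading backward) union of exterior balls, for $(t,x)\in\Omega$ the backward light cone from $(t,x)$ meets the slice $\{s\}\times\RR^N$ inside $\Omega_s$ for every $s\in[T,t]$, so the portion of the Duhamel integral that matters on $\Omega$ only sees the forcing on $\Omega$; one can make this rigorous either by the domain-of-dependence argument or, more cheaply, by the support-modification reduction described above (extend the data and the forcing by zero outside $\Omega$, apply the global Strichartz estimate, and observe by finite speed of propagation that the solution so obtained agrees with $u$ on $\Omega$). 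Everything else is the verbatim small-data fixed-point/continuity argument of \cite{KeMe08}.
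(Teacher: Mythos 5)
Your proposal is correct and takes essentially the same route as the paper: introduce the auxiliary solution of the inhomogeneous linear equation with the nonlinearity truncated to the exterior region, apply the global Strichartz estimate, and identify it with $u$ on $\Omega$ via finite speed of propagation, then conclude part (a) directly and part (b) by a continuity/bootstrap argument. The only cosmetic difference is that you phrase the finite-speed step as a data/forcing truncation plus a causality check for $\Omega$, while the paper writes down the auxiliary equation explicitly and leaves the causality of the exterior-cone region implicit.
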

\begin{proof}
 Let $\tilde{u}$ be the solution of 
 \begin{equation*}
  \left\{\begin{aligned}
  \partial_t^2\tilde{u}-\Delta \tilde{u}&=|u|^{\frac{4}{N-2}}u\indic_{\{|x|\geq A+t\}}\\
(\tilde{u},\partial_t \tilde{u})_{\restriction t=T}&=\vec{u}(T).
\end{aligned}\right.
\end{equation*}
By Strichartz estimates, for all $T_1>T$,
\begin{equation*}
\left|\left\|\tilde{u}\right\|_{S(\left\{|x|\geq A+t,\; T\leq t\leq T_1\right\})}-\left\|S_{\lin}(\cdot-T)\vec{u}(T)\right\|_{S(\left\{|x|\geq A+t,\; T\leq t\leq T_1\right\})}\right|
\leq C\left\|u\right\|_{S(\left\{|x|\geq A+t,\; T\leq t\leq T_1\right\})}^{\frac{N+2}{N-2}}.
\end{equation*}
By finite speed of propagation, $u(t,x)=\tilde{u}(t,x)$ if $t\geq T$, $|x|> A+t$, and thus
\begin{equation*}
\left|\left\|u\right\|_{S(\left\{|x|\geq A+t,\; T\leq t\leq T_1\right\})}-\left\|S_{\lin}(\cdot-T)\vec{u}(T)\right\|_{S(\left\{|x|\geq A+t,\; T\leq t\leq T_1\right\})}\right|
\leq C\left\|u\right\|_{S(\left\{|x|\geq A+t,\; T\leq t\leq T_1\right\})}^{\frac{N+2}{N-2}},
\end{equation*}
Assuming $\delta_1$ small, point \eqref{I:NL_L} follows immediately. An easy bootstrap argument yields point \eqref{I:L_NL}.
\end{proof}
\subsection{Profile decomposition}
\label{SS:profiles}
We finally recall that any sequence $\big\{(u_{0,n},u_{1,n})\big\}_n$ bounded in $\hdot\times L^2$ has a subsequence (still denoted by $\big\{(u_{0,n},u_{1,n})\big\}_n$) that admits a \emph{profile decomposition} \profiles, where for all $j$, $U_{\lin}^j$ is a solution of the linear wave equation \eqref{E3} and for all $j,n$, $\lambda_{j,n}>0$, $x_{j,n}\in \RR^N$ and $t_{j,n}\in \RR$, have the following properties:
$$j\neq k\Longrightarrow \lim_{n\to\infty} \frac{\lambda_{j,n}}{\lambda_{k,n}}+\frac{|x_{j,n}-x_{k,n}|}{\lambda_{j,n}}+\frac{|t_{j,n}-t_{k,n}|}{\lambda_{j,n}}=+\infty$$
(pseudo-orthogonality) and
$$\lim_{n\to\infty}\limsup_{n\to\infty} \left\|w_n^J\right\|_{S(\RR)}=0,$$
where 
\begin{equation}
\label{wnJ}
w_n^J(t)=S_{\lin}(t)(u_{0,n},u_{1,n})-\sum_{j=1}^J U_{\lin,n}^j(t), 
\end{equation} 
and
$$ U_{\lin,n}^j(t,x)=\frac{1}{\lambda_{j,n}^{\frac{N-2}{2}}}U^j\left( \frac{t-t_{j,n}}{\lambda_{j,n}},\frac{x-x_{j,n}}{\lambda_{j,n}} \right).$$
The existence of the profile decomposition was established in \cite{BaGe99} for $N=3$ (see \cite{Bulut10} for higher dimensions). We refer to \cite{BaGe99} for the properties of this profile decomposition (see also \cite[Section 3]{DuKeMe15Pb} for a review).
\section{Scattering to a linear solution outside a large wave cone}
\label{S:large_wave_cone}
We let $\tau_n\to+\infty$ and (after extraction), 
\begin{align}
\label{E12}
(v_0,v_1)&=w-\lim_{n\to\infty} \vec{S}_{\lin}(-\tau_n)\vec{u}(\tau_n)\text{ in }\hdot\times L^2\\
\label{E13}
v_{\lin}(t)&=S_{\lin}(t)(v_0,v_1),
\end{align} 
where $w-\lim$ stands for the weak limit.
\begin{prop}
\label{P:E5}
 Let $u$ be as in Theorem \ref{T:E1}, and assume that the conclusion of Theorem \ref{T:E1} does not hold. Then there exists $\overline{A}\in \RR$ with the following properties:
 \begin{gather}
  \label{E14}
  \forall A>\overline{A},\quad \left\|u\right\|_{S(\{t>0,\,|x|>t+A\})}<\infty\\
  \label{E14'}
  \forall A>\overline{A},\quad \lim_{t\to\infty}\left\||x|^{-1}u(t)\right\|_{L^2(\{|x|>t+A\})}+\left\|u(t)\right\|_{L^{\frac{2N}{N-2}}(\{|x|>t+A\})}=0\\
  \label{E15}
  \forall A>\overline{A},\quad \lim_{t\to\infty}\left\|\nabla_{t,x}(u-v_{\lin})(t)\right\|_{L^2(\{|x|>t+A\})}=0\\
  \label{E16}
  \|u\|_{S\left(\{t>0,|x|>t+\overline{A}\}\right)}=\infty.
 \end{gather}
\end{prop}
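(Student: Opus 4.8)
The plan is to build $\overline A$ out of the single monotone quantity $A\mapsto\|u\|_{S(\{t>0,\,|x|>t+A\})}$. Let $\mathcal S$ be the set of $A\in\RR$ for which this is finite; since $\{|x|>t+A'\}\subset\{|x|>t+A\}$ when $A'>A$, and the $S$-norm is monotone under inclusion of the space-time domain, $\mathcal S$ is an interval unbounded to the right. Assertions \eqref{E14}–\eqref{E16} will then follow from three facts: $\mathcal S$ contains a half-line; on $\mathcal S$ one has \eqref{E14'}–\eqref{E15} with the canonical $v_{\lin}$ of \eqref{E13}; and $\mathcal S\neq\RR$, because Theorem \ref{T:E1} is assumed to fail.

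\emph{$\mathcal S$ contains a half-line.} Fix a cutoff $\chi_A$ equal to $1$ on $\{|x|\ge A\}$, to $0$ on $\{|x|\le A/2\}$, with $|\nabla\chi_A|\lesssim 1/A$. A standard cutoff argument using Hardy's inequality gives $\|(\chi_A u_0,\chi_A u_1)\|_{\hdot\times L^2}\to 0$ as $A\to+\infty$, so by Strichartz $\|S_{\lin}(\cdot)(\chi_A u_0,\chi_A u_1)\|_{S(\RR)}\to 0$; finite speed of propagation for the linear equation gives $S_{\lin}(t)(u_0,u_1)=S_{\lin}(t)(\chi_A u_0,\chi_A u_1)$ on $\{t\ge0,\,|x|>t+A\}$, hence $\|S_{\lin}(\cdot)(u_0,u_1)\|_{S(\{t>0,\,|x|>t+A\})}<\delta_1$ for $A$ large. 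Claim \ref{Cl:FSP}, point \eqref{I:L_NL}, with $T=0$ then yields \eqref{E14} for such $A$.

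\emph{On $\mathcal S$, \eqref{E14'} and \eqref{E15} hold with $v_{\lin}$.} For $A\in\mathcal S$ let $\tilde u$ solve $(\partial_t^2-\Delta)\tilde u=|u|^{\frac4{N-2}}u\,\indic_{\{|x|>t+A\}}$ with $\vec{\tilde u}(0)=\vec u(0)$, as in the proof of Claim \ref{Cl:FSP}. Because $\|u\|_{S(\{t>0,\,|x|>t+A\})}<\infty$, the source lies in $L^1(\RR,L^2)$, so by Claim \ref{Cl:E4} there is a linear solution $v_{\lin}^A$ with $\|\vec{\tilde u}(t)-\vec v_{\lin}^A(t)\|_{\hdot\times L^2}\to 0$; since $\tilde u=u$ on $\{t>0,\,|x|>t+A\}$ by finite speed of propagation, this gives \eqref{E15} with $v_{\lin}^A$ in place of $v_{\lin}$, and \eqref{E14'} follows by combining it with Theorem \ref{T:B2} (in particular \eqref{B4} and the $r^{-(N-1)/2}$–decay of the radiation encoded by $G_+\in L^2$) and Hardy/Sobolev inequalities on the exterior region $\{|x|>t+A\}$ (whose inner radius tends to $+\infty$). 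It remains to identify $v_{\lin}^A$ with the $v_{\lin}$ of \eqref{E13}: applying the isometry $\vec S_{\lin}(-\tau_n)$ to $\vec u(\tau_n)=\vec v_{\lin}^A(\tau_n)+(\vec u-\vec v_{\lin}^A)(\tau_n)$, and splitting the error according to $\{|x|>\tau_n+A\}$, where it tends to $0$ strongly by \eqref{E15} at time $\tau_n$, and $\{|x|\le\tau_n+A\}$, where it tends to $0$ weakly after backward linear evolution, one obtains $\vec S_{\lin}(-\tau_n)\vec u(\tau_n)\rightharpoonup(v_0^A,v_1^A)$, hence $(v_0^A,v_1^A)=(v_0,v_1)$ by \eqref{E12}; in particular $v_{\lin}^A$ is independent of $A$.

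\emph{Conclusion, and the main obstacle.} If $\mathcal S=\RR$ then, by the previous step, \eqref{E14'} and \eqref{E15} hold for every $A$; but at a fixed $A$ those two statements together are exactly \eqref{E4}, so Theorem \ref{T:E1} would hold with this $v_{\lin}$, contrary to hypothesis. Hence $\mathcal S\neq\RR$, and since $\mathcal S$ contains a half-line and is upward closed, $\overline A:=\inf\mathcal S$ is finite; for $A>\overline A$ we have $A\in\mathcal S$, so \eqref{E14} holds and then \eqref{E14'}–\eqref{E15} by the previous step. Finally $\|u\|_{S(\{t>0,\,|x|>t+A\})}=+\infty$ for every $A<\overline A$; letting $A\uparrow\overline A$, the domains $\{t>0,\,|x|>t+A\}$ decrease, up to a null set, to $\{t>0,\,|x|>t+\overline A\}$, so by monotone convergence $\|u\|_{S(\{t>0,\,|x|>t+\overline A\})}=+\infty$, which is \eqref{E16}. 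I expect the only real difficulty to be the identification $v_{\lin}^A=v_{\lin}$ above — equivalently, that the part of $\vec u(\tau_n)-\vec v_{\lin}^A(\tau_n)$ supported inside the cone becomes negligible for the weak topology after applying $\vec S_{\lin}(-\tau_n)$ — which is where finite speed of propagation, and in odd dimension strong Huygens, must be used carefully; the exterior Hardy/Sobolev bounds and the half-line statement are routine, and \eqref{E16} is a soft consequence of the monotonicity of the $S$-norm.
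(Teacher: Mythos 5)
Your overall scaffolding (define $\mathcal S$, take $\overline A=\inf\mathcal S$, identify $v_{\lin}$ via the exterior approximation $\tilde u$) matches the paper's, and the construction of $v_{\lin}^A$ from Claim \ref{Cl:E4} plus finite speed of propagation is exactly Step 2 of the paper. However, there are two genuine gaps, both at the points you flagged as either ``the only real difficulty'' or ``a soft consequence.''

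First, the identification $v_{\lin}^A=v_{\lin}$ as elements of $\hdot\times L^2$ is \emph{false} in general, not merely hard to prove. What is true, and all that is needed for \eqref{E15}, is that the two radiation fields agree on $\{\eta>A\}$, i.e.\ $v_{\lin}^A$ and $v_{\lin}$ agree asymptotically in the exterior region $\{|x|>t+A\}$. Your proposed justification --- that the part of $(\vec u-\vec v_{\lin}^A)(\tau_n)$ supported in $\{|x|\le\tau_n+A\}$ tends to $0$ weakly after applying $\vec S_{\lin}(-\tau_n)$ --- is precisely where this breaks down: if that piece contains something resembling $\vec w_{\lin}(\tau_n)$ for a fixed linear wave $w_{\lin}$ whose radiation field is supported in $\{\eta<A\}$ (such a wave concentrates inside $\{|x|<t+A\}$ for large $t$), then $\vec S_{\lin}(-\tau_n)\vec w_{\lin}(\tau_n)=\vec w_{\lin}(0)\neq 0$ for every $n$, so the backward evolution does not tend to $0$ weakly. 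Finite speed of propagation and strong Huygens cannot rescue this; the obstruction is not about propagation speed but about the fact that the ``interior radiation'' is exactly the discrepancy between $v_0,v_1$ and $v_0^A,v_1^A$. The paper circumvents this by proving only the exterior agreement $G=G^A$ on $\{\eta>A\}$, via a duality argument: pair $\vec u(\tau_n)$ against linear waves $\vec w_{\lin}(\tau_n)$ whose radiation field $\Phi$ is supported in $\{\eta>A+\eps\}$; the isometry of Theorem \ref{T:B2} and the weak limit \eqref{E12} give $2\int G\Phi$, while the exterior approximation \eqref{E23} and the support of $\Phi$ give $2\int G^A\Phi$.

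Second, the proof of \eqref{E16} by ``monotone convergence'' is incorrect. As $A\uparrow\overline A$ the domains $\{t>0,\,|x|>t+A\}$ \emph{shrink}, so the $S$-norm \emph{decreases}; from $\|u\|_{S(\Omega_A)}=\infty$ for $A<\overline A$ you cannot conclude anything about the smaller domain $\Omega_{\overline A}$. Indeed, a priori $u$ could have infinite $S$-norm on each shell $\{t+A<|x|<t+\overline A\}$ while having finite $S$-norm on $\{|x|>t+\overline A\}$. Ruling this out requires a dynamical input: the paper assumes $\|u\|_{S(\Omega_{\overline A})}<\infty$, takes $T$ large so that the tail is $<\delta_1/4$, uses Claim \ref{Cl:FSP} to pass to $S_{\lin}(\cdot-T)\vec u(T)$, then exploits the absolute continuity of the $S$-norm of the \emph{fixed} linear wave $S_{\lin}(\cdot-T)\vec u(T)$ to enlarge the region to $\{|x|>\overline A-\eps+t\}$ while staying below $\delta_1$, and applies Claim \ref{Cl:FSP} again to get $\|u\|_{S(\{t>T,\,|x|>\overline A-\eps+t\})}<2\delta_1$, contradicting the definition of $\overline A$.
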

\begin{proof}
 \EMPH{Step 1}
 We note that $\|u\|_{S\left\{|x|>|t|+A\right\}}$ is finite for large $A>0$. Indeed, since 
 $$\left\|S_{\lin}(\cdot)(u_0,u_1)\right\|_{S(\RR)}<\infty,$$
 we have, for large $A$,
$$\left\|S_{\lin}(\cdot)(u_0,u_1)\right\|_{S(\{|x|\geq |t|+A\})}<\delta_1,$$
where $\delta_1$ is given by Claim \ref{Cl:FSP}. The conclusion follows from Claim \ref{Cl:FSP}.

\EMPH{Step 2} We let $\overline{A}\in \RR\cup\{-\infty\}$ be defined by
\begin{equation}
 \label{E20} 
 \overline{A}:=\inf\left\{A\in \RR\;:\; \|u\|_{S(\{t>0,\;|x|>t+A\})}<\infty\right\}.
\end{equation} 
In particular \eqref{E14} holds. We prove here that \eqref{E15} holds. Let $A>\overline{A}$. Let $v$ be the solution of 
\begin{equation}
 \label{E21}
 \left\{\begin{aligned}
       \partial_t^2v-\Delta v&=|u|^{\frac{4}{N-2}}u\indic_{|x|>t+A}\\  
       \vec{v}_{\restriction t=0}&=(u_0,u_1).
        \end{aligned}
\right.
\end{equation} 
 By the definition of $\overline{A}$, we see that the right-hand side of the first equation in \eqref{E21} is in $L^1([0,+\infty),L^2(\RR^N))$. By Claim \ref{Cl:E4}, there exists $v_{\lin}^A$, solution of \eqref{E3}, such that 
 \begin{equation}
  \label{E22}
  \lim_{t\to+\infty} \int \left|\nabla_{t,x}(v-v_{\lin}^A)(t,x)\right|^2\,dx=0.
 \end{equation} 
 By finite speed of propagation, $u(t,x)=v(t,x)$ for $(t,x)$ such that $t>0$, $|x|>A+t$, and thus
\begin{equation}
  \label{E23}
  \lim_{t\to+\infty} \int_{|x|>t+A} \left|\nabla_{t,x}(u-v_{\lin}^A)(t,x)\right|^2\,dx=0
 \end{equation} 
 and 
 \begin{equation*}
  \lim_{t\to+\infty}  \int_{|x|>t+A}\frac{1}{|x|^2} |u(t,x)|^2+|u(t,x)|^{\frac{2N}{N-2}}\,dx=0.
 \end{equation*} 
 It remains to prove 
 $$\lim_{t\to+\infty}\left\|\nabla_{t,x}\left(v_{\lin}^A(t)-v_{\lin}(t)\right)\right\|_{L^2(|x|>t+A)}=0.$$ 
 We let $G, G^A \in L^2(\RR\times S^{N-1})$ be the radiation fields associated to $v_{\lin}$ and $v_{\lin}^A$ respectively (see Theorem \ref{T:B2}). We will prove
 \begin{equation}
  \label{E26}
  \forall \omega\in S^{N-1},\; \forall \eta>A,\quad G(\eta,\omega)=G^A(\eta,\omega),
 \end{equation} 
 which, in view of \eqref{E23} and Theorem \ref{T:B2}, will yield the conclusion of Step 2.
 
 Fix $\eps>0$ and $\Phi\in L^2(\RR\times S^{N-1})$ such that $\Phi(\eta,\omega)=0$ if $\eta\leq A+\eps$. Let $w_{\lin}(t)=S_{\lin}(t)(w_0,w_1)$ be the solution of the linear wave equation \eqref{E3} whose associated radiation field for $t\to+\infty$ is $\Phi$ (see Theorem \ref{T:B2}). In other words,
 \begin{align}
  \label{E27}
  \lim_{t\to+\infty}\int_0^{+\infty}\int_{S^{N-1}} \left|r^{\frac{N-1}{2}}\partial_{t}w_{\lin}(t,r\omega)-\Phi(r-t,\omega)\right|^2\,d\omega\,dr=0\\
  \label{E27'}
 \lim_{t\to+\infty}\int_0^{+\infty}\int_{S^{N-1}} \left|r^{\frac{N-1}{2}}\partial_{r}w_{\lin}(t,r\omega)+\Phi(r-t,\omega)\right|^2\,d\omega\,dr=0.
  \end{align} 
  On one hand, we have
  \begin{multline}
   \label{E28}
   \Big(\vec{u}(\tau_n),S_{\lin}(\tau_n)(w_0,w_1)\Big)_{\hdot\times L^2}=\Big(S_{\lin}(-\tau_n)\vec{u}(\tau_n),(w_0,w_1)\Big)_{\hdot\times L^2}
   \\
   \underset{n\to\infty}{\longrightarrow} \left((v_0,v_1),(w_0,w_1)\right)_{\hdot\times L^2},
  \end{multline} 
  by the definition \eqref{E12} of $(v_0,v_1)$, and thus, using the isometry property of radiation fields:
  \begin{equation}
   \label{E29}
   \lim_{n\to\infty}
   \left(\vec{u}(\tau_n),S_{\lin}(\tau_n)(w_0,w_1)\right)_{\hdot\times L^2}=2\int_{-\infty}^{+\infty}\int_{S^{N-1}} G(\eta,\omega)\,\Phi(\eta,\omega)\,d\omega\,d\eta.
  \end{equation} 
  On the other hand:
  \begin{multline*}
   \left(\vec{u}(\tau_n),S_{\lin}(\tau_n)(w_0,w_1)\right)_{\hdot\times L^2}= \int\nabla_{t,x}u(\tau_n,x)\cdot\nabla_{t,x} w_{\lin}(\tau_n,x)\,dx\\
   =\int_0^{+\infty} \int_{S^{N-1}}r^{\frac{N-1}{2}}\left(\partial_tu(\tau_n,r\omega)-\partial_ru(\tau_n,r\omega)\right)\Phi(r-\tau_n,\omega)\,d\omega\,dr+o_n(1)\\
   =2\int_0^{+\infty} \int_{S^{N-1}}G^A(r-\tau_n,\omega)\Phi(r-\tau_n,\omega)\,d\omega\,dr+o_n(1),
  \end{multline*}
  where at the last line we used that $\Phi(r-\tau_n,\omega)=0$ if $r-\tau_n\leq A$, \eqref{E23}, and the definition of $G^A$. Hence
  $$\lim_{n\to\infty} \left(\vec{u}(\tau_n),\vec{S}_{\lin}(\tau_n)(w_0,w_1)\right)_{\hdot\times L^2}=2\int_{-\infty}^{+\infty}\int_{S^{N-1}} G^A(\eta,\omega)\Phi(\eta,\omega)\,d\omega\,d\eta.$$
  Combining with \eqref{E29}, we obtain that for all $\Phi\in L^2(\RR\times S^{N-1})$ such that $\Phi(\eta)=0$ if $\eta\leq A+\eps$,
  $$\int_{-\infty}^{+\infty} (G^A-G)\Phi=0.$$
  Using this equality with 
  $$ \Phi(\eta,\omega)=(G^A(\eta,\omega)-G(\eta,\omega))\indic_{\eta\geq A+\eps}$$
  yields \eqref{E26} (since $\eps>0$ can be taken arbitrarily small), which concludes Step 2.
  
  \EMPH{Step 3} By Step 2, and since we are assuming that the conclusion of Theorem \ref{T:E1} does not hold, $\overline{A}\in \RR$. In this step we prove:
  \begin{equation}
   \label{E31}
   \|u\|_{S\left(\{t>0,\; |x|>\overline{A}+t\}\right)}=\infty,
  \end{equation} 
  which will conclude the proof of Proposition \ref{P:E5}. We argue by contradiction, assuming
  \begin{equation}
   \label{E32} \|u\|_{S\left(\{ t>0,\;|x|\geq \overline{A}+t\}\right)}<\infty.
  \end{equation} 
  Let $\delta_1$ be given by Claim \ref{Cl:FSP} and $T\gg 1$ such that 
  $$\|u\|_{S\left(\{t>T,\; |x|\geq \overline{A}+t\}\right)}<\delta_1/4.$$
  Then by Claim \ref{Cl:FSP},
  $$\left\|S_{\lin}(t-T)\vec{u}(T)\right\|_{S\left(\{t>T,\;|x|>\overline{A}+t\}\right)}<\delta_1/2.$$
  Let $\eps>0$ such that 
  \begin{equation}
   \label{E33}
   \left\|S_{\lin}(t-T)\vec{u}(T)\right\|_{S\left(\{t>T,\;|x|>\overline{A}-\eps+t\}\right)}<\delta_1.
  \end{equation} 
  Then, again by Claim \ref{Cl:FSP},
  $$\|u\|_{S\left(\{t>T,\;|x|>\overline{A}-\eps+t\}\right)}<2\delta_1,$$
  which contradicts the definition of $\overline{A}$, concluding the proof.
 \end{proof}
\section{Singular and regular directions}
\label{S:sing_reg}
In this section, we still assume that $u$ is a solution of \eqref{NLW} that satisfies the assumptions of Theorem \ref{T:E1} and not its conclusion. We let $\overline{A}\in \RR$ be defined by Proposition \ref{P:E5} in the preceding section.
\begin{defi}
 \label{D:E6}
 The set $\RRR$ of \emph{regular directions} is the set of $\omega\in S^{N-1}$ such that there exists $\eps>0$ with 
 \begin{equation}
  \label{E:34}
  \left\|u\right\|_{S\left( \left\{t>0,\; |x|>\overline{A}-\eps+t\text{ and }\left|\widehat{\left(\omega,x\right)}\right|<\eps \right\}\right)}<\infty,
 \end{equation} 
 where $\widehat{(\omega,x)}\in [-\pi,\pi)$ is the angle between $\omega$ and $x$. The set $\SSS$ of \emph{singular directions} is defined as $\SSS:=S^{N-1}\setminus \RRR$.
\end{defi}
In this section we prove:
\begin{prop}
 \label{P:E7}
 Under the above assumptions, the set $\SSS$ is finite and nonempty.
\end{prop}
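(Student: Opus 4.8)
The plan is to prove finiteness and nonemptiness separately; nonemptiness is the easier half. If $\SSS$ were empty, then every $\omega\in S^{N-1}$ would be a regular direction, so for each $\omega$ there would be an $\eps_\omega>0$ with \eqref{E:34} finite. By compactness of $S^{N-1}$, finitely many of the angular caps $\{|\widehat{(\omega,x)}|<\eps_\omega\}$ would cover the sphere, and since the corresponding exterior regions $\{t>0,\,|x|>\overline A-\eps_\omega+t,\,|\widehat{(\omega,x)}|<\eps_\omega\}$ would then cover $\{t>0,\,|x|>\overline A-\min_\omega\eps_\omega+t\}$, subadditivity of the quantity $\|\cdot\|_{S(\Omega)}^{\frac{2(N+2)}{N-2}}$ (it is an integral of a positive density, cf.\ \eqref{E8}) would give $\|u\|_{S(\{t>0,\,|x|>\overline A-\eps+t\})}<\infty$ for $\eps=\min_\omega\eps_\omega>0$, contradicting the definition \eqref{E20} of $\overline A$. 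Hence $\SSS\neq\emptyset$.

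For finiteness, the idea is a packing argument at a single late time slice combined with finite speed of propagation and small data theory. First I would show that $\RRR$ is open and, more importantly, that the complement $\SSS$ can be detected at a fixed time: if $\omega_0\in\SSS$, then for every $\eps>0$ and every $T$, $\|u\|_{S(\{t>T,\,|x|>\overline A-\eps+t,\,|\widehat{(\omega_0,x)}|<\eps\})}=\infty$, because the region with $t\le T$ contributes only a finite amount (by the local Cauchy theory, $u\in S(J)$ for $J\Subset(T_-,T_+)$) so the divergence comes from arbitrarily large times. Consequently, using Claim \ref{Cl:FSP}\eqref{I:NL_L}, for each $\omega_0\in\SSS$ and each $\eps>0$ and each large $T$ we must have $\|S_{\lin}(\cdot-T)\vec u(T)\|_{S(\{t>T,\,|x|>\overline A-\eps+t,\,|\widehat{(\omega_0,x)}|<\eps\})}\ge \delta_1/2$, i.e.\ a definite amount of \emph{linear} Strichartz mass emanates from $\vec u(T)$ into each small angular sector around a singular direction. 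Because the global Strichartz norm $\|S_{\lin}(\cdot-T)\vec u(T)\|_{S(\RR)}$ is finite (from \eqref{E2} and Strichartz), and because angular sectors around well-separated directions, intersected with the truncated cone $\{|x|>\overline A-\eps+t\}$, become \emph{disjoint} in spacetime for $\eps$ small (here finite speed of propagation / the geometry of the cone is what makes two angularly-separated exterior sectors disjoint, not merely their angular projections), only finitely many pairwise $\eps$-separated directions can each carry $\ge(\delta_1/2)^{\frac{2(N+2)}{N-2}}$ worth of the finite total, and a standard covering argument upgrades this to: $\SSS$ is finite.

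I expect the geometric disjointness step to be the main obstacle: one must check that for directions $\omega_1,\dots,\omega_k\in S^{N-1}$ that are pairwise separated by angle $\ge 3\eps$ (say), the spacetime regions $\Omega_i=\{t>T,\,|x|>\overline A-\eps+t,\,|\widehat{(\omega_i,x)}|<\eps\}$ are pairwise disjoint for all sufficiently large $T$ — the subtlety is that near the tip of the cone the angular caps are "thick" in absolute terms, so one needs $|x|\gtrsim T$ large to make the caps thin enough to separate, which is exactly why restricting to $t>T$ with $T$ large (equivalently $|x|>\overline A-\eps+t$ with $t$ large) is essential. Once disjointness holds, the counting is immediate: $k\cdot(\delta_1/2)^{\frac{2(N+2)}{N-2}}\le \sum_i\|S_{\lin}(\cdot-T)\vec u(T)\|_{S(\Omega_i)}^{\frac{2(N+2)}{N-2}}\le \|S_{\lin}(\cdot-T)\vec u(T)\|_{S(\RR)}^{\frac{2(N+2)}{N-2}}<\infty$, bounding the number of $3\eps$-separated points in $\SSS$ uniformly in $\eps$, hence $\SSS$ finite. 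The remaining care is in making the reduction "divergence is at large times and in the linear flow" precise, which is routine given Claims \ref{Cl:E4} and \ref{Cl:FSP} and the blow-up criterion.
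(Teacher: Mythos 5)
Your nonemptiness argument is correct and matches the paper's: finite subadditivity of $\|\cdot\|_{S(\cdot)}^{\frac{N+2}{N-2}}$ over a cover does hold (from $(\sum a_i)^{1/2}\le\sum a_i^{1/2}$ inside the time integral), so the compactness-plus-contradiction-with-$\overline A$ argument is fine. The finiteness half, however, rests on a step that is false. The inequality $\sum_i\|v\|_{S(\Omega_i)}^{\alpha}\le\|v\|_{S(\RR)}^{\alpha}$ over pairwise disjoint $\Omega_i$ fails for the mixed norm $S$ of \eqref{E8}, for any $\alpha$: writing $p=\frac{2(N+2)}{N-2}$, $\|v\|_{S(\cup_i\Omega_i)}^{\frac{N+2}{N-2}}=\int_t\big(\sum_i\int_{\Omega_{i,t}}|v|^p\big)^{1/2}\,dt$, and $(\sum a_i)^{1/2}$ can be far smaller than $\sum a_i^{1/2}$. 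A constant function on $[0,1]\times[0,1]^3$ with the spatial cube split into $k$ disjoint boxes gives $\sum_i\|\cdot\|_{S(\Omega_i)}^5\sim k^{1/2}\to\infty$ while $\|\cdot\|_{S}^5=1$. Pure space-time Lebesgue norms \emph{are} additive over disjoint sets, but $S$ is not one. This is precisely why the paper does not sum $S$-norms directly: it instead proves Proposition~\ref{P:E8} (a $\ge\delta_2$ lower bound on the $S$-norm of the linear flow on a thin shell $\overline A-\eps+t\le|x|\le\overline A+\eps+t$ with angular width $4/\sqrt T$) and then \emph{interpolates}, bounding the $S$-norm on each piece by a positive power of the pure Lebesgue $L^{\frac{2(N+1)}{N-2}}_{t,x}$-norm times a Strichartz-bounded factor ($L^4_tL^{12}_x$ for $N=3$, $L^2_tL^{\frac{2N}{N-3}}_x$ for $N\ge4$), cf.\ \eqref{EC1} and \eqref{EC2}. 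Only the $L^{\frac{2(N+1)}{N-2}}_{t,x}$-norms are summed. Without this interpolation (or an equivalent device), the packing argument does not close.

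A secondary gap: you invoke Claim~\ref{Cl:FSP} on angular-sector regions $\{t>T,\,|x|>\overline A-\eps+t,\,|\widehat{(\omega_0,x)}|<\eps\}$, but Claim~\ref{Cl:FSP} is stated and proved only for full truncated exterior cones $\{|x|\ge A+t,\,t\ge T\}$, and its proof relies on the causal closedness of that region (forward light cones from interior points remain inside). Angular sectors with a fixed aperture lack this property as stated; the paper deals with this via the set $\DDD_n$ of Claim~\ref{Cl:E9} and the whole of Proposition~\ref{P:E8}. Also, your geometric worry about caps being thick near the tip is not what is at stake for fixed aperture (two angular cones of aperture $<\eps$ around directions separated by $\ge 3\eps$ are disjoint for all $x$, independently of $T$); the shrinking width $4/\sqrt T$ in the paper serves the different purpose of making \emph{arbitrary} distinct singular directions disjointly enclosed for large $T$, so one bounds $|\SSS|$ directly rather than via a separated-packing count.
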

\begin{prop}
\label{P:E8}
 There exists $\delta_2>0$ such that if $\omega\in S^{N-1}$ satisfies, for some $\eps>0$ 
 \begin{equation}
  \label{E35}
  \liminf_{T\to+\infty} \left\|S_{\lin}(\cdot-T)\vec{u}(T)\right\|_{S\left( \left\{ t>T,\; \overline{A}-\eps+t\leq |x|\leq \overline{A}+\eps+t\text{ and }\widehat{(\omega,x)}\leq \frac{4}{\sqrt{T}}\right\} \right)}<\delta_2,
 \end{equation} 
 then $\omega\in \RRR$.
\end{prop}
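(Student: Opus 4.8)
The plan is to argue by contradiction and to combine the smallness hypothesis \eqref{E35} with Claim \ref{Cl:FSP} and finite speed of propagation, after upgrading the control on the thin cone-neighbourhood of $\omega$ to control on a genuine solid cone $\{|x|>\overline{A}-\eps'+t,\ \widehat{(\omega,x)}<\eps'\}$. The key geometric point is that the region appearing in \eqref{E35}, although its angular aperture shrinks like $1/\sqrt{T}$, is a neighbourhood of the portion of the light cone $\{|x|=t+a\}$, $\overline{A}-\eps\le a\le\overline{A}+\eps$, in the angular direction $\omega$; and the \emph{transverse} Strichartz norm in a genuine solid cone around $\omega$ at times $\ge T$ is controlled by the behaviour of $u$ near that light cone plus the behaviour strictly outside it (where we already have finiteness, $|x|>t+\overline{A}+\eps$ being contained in $\{t>0,|x|>t+A\}$ for $A$ slightly bigger than $\overline{A}$, on which $\|u\|_{S}<\infty$ by \eqref{E14}). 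So the only place where $\|u\|_S$ could fail to be finite in a neighbourhood of $\omega$ is exactly the region in \eqref{E35}.

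Concretely, I would proceed as follows. First, fix $A'>\overline{A}$ with $A'<\overline{A}+\eps$; by \eqref{E14} we have $\|u\|_{S(\{t>0,|x|>t+A'\})}<\infty$, hence for $T$ large $\|u\|_{S(\{t>T,|x|>t+A'\})}<\delta_1/10$, and by Claim \ref{Cl:FSP}\eqref{I:NL_L} the same (up to a factor $2$) holds for $S_{\lin}(\cdot-T)\vec u(T)$. Second, pick along the $\liminf$ in \eqref{E35} a sequence $T\to+\infty$ realizing the bound $<\delta_2$ on the thin region; with $\delta_2$ chosen so that $2\delta_2+2\delta_1/10<\delta_1$, the triangle inequality for the $S$-norm gives, for such $T$,
\begin{equation*}
\left\|S_{\lin}(\cdot-T)\vec u(T)\right\|_{S\left(\left\{t>T,\ |x|>\overline{A}-\eps+t,\ \widehat{(\omega,x)}<\tfrac{4}{\sqrt T}\right\}\right)}<\delta_1,
\end{equation*}
because that solid-cone region is contained in the union of the thin slab from \eqref{E35} and of $\{t>T,|x|>t+A'\}$ once $T$ is large enough that $\widehat{(\omega,x)}<4/\sqrt T$ and $|x|\ge t+\overline{A}+\eps$ cannot happen simultaneously — no, more carefully: the solid cone splits into the part with $|x|\le t+\overline{A}+\eps$ (inside the thin slab, since there $|x|\ge t+\overline{A}-\eps$ too) and the part with $|x|>t+\overline{A}+\eps>t+A'$. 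Third, apply Claim \ref{Cl:FSP}\eqref{I:L_NL} on this solid-cone region to get $\|u\|_{S(\dots)}<2\delta_1$ at time $T$; by finite speed of propagation this controls $u$ on the truncated solid cone emanating from the ball of radius comparable to $\sqrt T\cdot$(aperture) around the appropriate point, and letting $T\to+\infty$ along the chosen sequence the aperture $4/\sqrt T$ times the spatial scale $\sim T$ opens up to a fixed solid angle, so one obtains a genuine $\eps'>0$ with $\|u\|_{S(\{t>0,|x|>\overline{A}-\eps'+t,\ \widehat{(\omega,x)}<\eps'\})}<\infty$, i.e. $\omega\in\RRR$.

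The main obstacle, and the step that needs the most care, is the third one: converting the $1/\sqrt T$ angular aperture at time $T$, via finite speed of propagation, into a \emph{fixed} angular aperture valid for all positive times. The relevant geometry is that the forward light cone from a ball of radius $\rho$ centred at a point at distance $\sim T$ from the origin in the direction $\omega$ subtends, near that distance, an angle $\sim \rho/T$; so to capture a fixed angle $\eps'$ one needs $\rho\sim \eps' T$, whereas the slab in \eqref{E35} only gives control on a ball of radius $\sim (4/\sqrt T)\cdot T=4\sqrt T=o(T)$. Thus a single time $T$ is \emph{not} enough, and one genuinely uses that \eqref{E35} holds for a sequence $T_k\to\infty$: one shows that the union over $k$ of the backward-in-influence regions of these balls eventually covers a fixed solid cone $\{|x|>\overline{A}-\eps'+t,\ \widehat{(\omega,x)}<\eps'\}$ at every time, patching finitely many of them together, so that $\|u\|_S$ on that fixed solid cone is bounded by a finite sum of terms each $<2\delta_1$. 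Carrying this covering argument out cleanly — checking that the finitely many truncated cones from the $T_k$'s do tile the target solid cone, and that $S$-norm finiteness is additive over such a finite cover — is the technical heart of the proof; the rest is a bookkeeping exercise in the constants $\delta_1,\delta_2,\eps,\eps'$.
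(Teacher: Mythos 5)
There is a genuine gap, and you partly sense it yourself, but the fix you sketch does not work. The bootstrap argument that upgrades $\|S_{\lin}(\cdot-T)\vec u(T)\|_{S(\Omega)}$ small to $\|u\|_{S(\Omega)}$ small (i.e.\ Claim~\ref{Cl:FSP}\eqref{I:L_NL}) is \emph{not} a black box valid for arbitrary $\Omega$: its proof relies on finite speed of propagation, which requires $\Omega$ to be backward causal (if $(t,x)\in\Omega$, $t_n<t'<t$, $|x'-x|<t-t'$, then $(t',x')\in\Omega$). Your region $\{t>T,\ |x|>\overline A-\eps+t,\ |\widehat{(\omega,x)}|<4/\sqrt T\}$ fails this: a point $(t,x)$ at angle $0$ from $\omega$ can have, in its backward light cone at time $t'$ with $t-t'$ comparable to $|x|$, points $x'$ with $|x'|>\overline A-\eps+t'$ but with angle of order $1$. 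So the step "apply Claim~\ref{Cl:FSP}\eqref{I:L_NL} on this solid-cone region" is unjustified, and the resulting inequality $\|u\|_{S(\dots)}<2\delta_1$ does not follow.

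Your proposed repair — patching finitely many forward influence regions of balls of radius $\sim\sqrt{T_k}$ at the times $T_k$ — cannot succeed even in principle. The forward influence cone of a ball of radius $\rho_k\sim\sqrt{T_k}$ centred near $(T_k+\overline A)\omega$ subtends, at later time $t$, an angle $\sim\rho_k/t$, which tends to $0$ as $t\to\infty$; and since $\rho_k/T_k\to 0$ as $k\to\infty$ as well, the union over $k$ of these forward cones does not contain any fixed-aperture cone $\{|x|>\overline A-\eps'+t,\ |\widehat{(\omega,x)}|<\eps'\}$, no matter how the $T_k$ are chosen and no matter how many of them you take. So it is not a bookkeeping issue; the covering simply fails.

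What the paper does instead is replace the naive cone by the region $\DDD_n=\bigcup_{t>t_n}\{t\}\times D_{\overline A-\eps+t,\,1/\sqrt{t_n}}$, where $D_{\tau,\theta}$ is the set of points at distance $>\tau$ from the exterior cone $\Gamma_\theta$. This region is engineered to have all three needed properties at once: it is backward causal (Claim~\ref{Cl:E9}, because distance to $\Gamma_\theta$ decreases by at most $|t-t'|$ along a backward light cone), so the bootstrap is legitimate; it \emph{contains} the genuine fixed-aperture cone $\{t>t_n,\ |x|>\overline A-\eps+t,\ |\widehat{(e_1,x)}|<1/\sqrt{t_n}\}$ (Lemma~\ref{L:E8}\eqref{LE8:2}), so $\|u\|_{S(\DDD_n)}<\infty$ directly gives $e_1\in\RRR$ with $\eps'=\min(\eps,1/\sqrt{t_n})$ for one fixed large $n$, no patching needed; and its intersection with the thin slab $\{|x|\le\overline A+\eps+t\}$ has angular aperture at most $1/\sqrt{t_n}+C\sqrt{\eps/t}\le 2/\sqrt{t_n}$ (Lemma~\ref{L:E8}\eqref{LE8:3}), so the smallness of the linear evolution there follows from \eqref{E35}, while the part with $|x|>\overline A+\eps+t$ is handled by the exterior estimate as in your second step. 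Your triangle-inequality split into "thin slab" plus "far exterior" is the right idea and matches the paper's Step~1; what you are missing is the geometric construction of a causal domain that simultaneously pinches near the light cone and opens to a fixed cone in the interior.
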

We first prove Proposition \ref{P:E7} assuming Proposition \ref{P:E8}.
\subsection{Finiteness of the set of singular directions}
We first prove by contradiction that $\SSS$ is non-empty.

Assume that $\SSS$ is empty. Thus $\RRR=S^{N-1}$ and for all $\omega$ belonging to $S^{N-1}$, there exists $\eps(\omega)$ such that 
\begin{equation}
 \label{E36}
  \left\|u\right\|_{S\left( \left\{t>0,\; |x|>\overline{A}-\eps(\omega)+t\text{ and }\left|\widehat{\left(\omega,x\right)}\right|<\eps (\omega)\right\}\right)}<\infty.
 \end{equation} 
 By the compactness of $S^{N-1}$, we can find $\omega_1,\ldots,\omega_J$ in $S^{N-1}$ such that 
 \begin{equation}
  \label{E37}
  S^{N-1}=\bigcup_{j=1}^J\left\{\omega\in S^{N-1}\;:\; \left|\widehat{(\omega,\omega_j)}\right|<\eps(\omega_j)\right\}.
 \end{equation} 
 Letting $\eps=\min_{j=1\ldots J} \eps(\omega_j)$, we see that 
 $$\|u\|_{S\left( \left\{t>0,\; |x|>\overline{A}-\eps+t \right\}\right)}<\infty,$$
 contradicting the definition of $\overline{A}$.
 
 We next prove that $\SSS$ is finite. Let $\omega_1$,\ldots,$\omega_J$ be \emph{two by two distinct} elements of $\SSS$. Then for large $T$,  the sets 
 $$\left\{(t,x)\;:\; t>T,\; \overline{A}-\eps+t\leq |x|\leq \overline{A}+\eps+t\text{ and }\left|\widehat{(\omega_j,x)}\right|\leq \frac{4}{\sqrt{T}}\right\}$$
 are pairwise disjoint.  As a consequence, for large $T$,
 \begin{multline}
  \label{E38}
  \sum_{j=1}^J \left\|S_{\lin}(\cdot-T)\vec{u}(T)\right\|_{L^{\frac{2(N+1)}{N-2}}\left(\left\{ t>T,\; \overline{A}-\eps+t\leq |x|\leq \overline{A}+\eps+t\text{ and }\left|\widehat{(\omega_j,x)}\right|\leq \frac{4}{\sqrt{T}}\right\}\right)}^{\frac{2(N+1)}{N-2}}\\
  \leq \left\|S_L(\cdot-T)\vec{u}(T)\right\|_{L^{\frac{2(N+1)}{N-2}}\left([\overline{A}-\eps+T,+\infty)\times \RR^N\right)}^{\frac{2(N+1)}{N-2}}.
 \end{multline}
Since $\overline{u}(T)$ is bounded in $\hdot\times L^2$, the right-hand side of \eqref{E38} is bounded independently of $T>0$. 

Let 
$$\Omega_j=\Big\{ (t,x)\;:\; t>T,\; \overline{A}-\eps+t\leq |x|\leq \overline{A}+\eps+t\text{ and } \left|\widehat{(\omega_j,x)}\right|\leq 4/\sqrt{T}\Big\}.$$
We will prove that $\SSS$ is finite distinguishing between $N=3$ and $N=4,5$. If $N=3$, we have
\begin{equation}
 \label{EC1}
 \left\|S_{\lin}(\cdot-T)\vec{u}(T)\right\|_{S(\Omega_j)}\leq \left\|S_{\lin}(\cdot-T)\vec{u}(T)\right\|_{L^4_tL^{12}_x(\Omega_j)}^{\frac{3}{5}}\left\|S_{\lin}(\cdot-T)\vec{u}(T)\right\|^{\frac{2}{5}}_{L^8(\Omega_j)}
\end{equation} 
Since $\omega_j\in \SSS$, the left-hand side of \eqref{EC1} is, according to Proposition \ref{P:E8}, bounded from below by $\frac{\delta_2}{2}$ for large $n$. Combining with the boundedness of $\vec{u}(t)$ in $\hdot\times L^2$ and Strichartz estimates, we deduce that for large $t$,$$\delta_2\leq C\left\|S_{\lin}(\cdot-T)\vec{u}(T)\right\|^{\frac{2}{5}}_{L^8(\Omega_j)}.$$
Hence by \eqref{E38},
$$ J\delta_2^{20}\leq C \left\|S_{\lin}(\cdot-T)\vec{u}(T)\right\|^8_{L^8\left([\overline{A}-\eps+T,+\infty)\times \RR^N\right)}\leq C,$$
where the right-hand side inequality is a consequence of Strichartz inequality and the boundedness of $\vec{u}$ in $\hdot\times L^2$. This proves that $\SSS$ is finite.

If $N=4$ or $N=5$, the proof is very close, using 
\begin{equation}
 \label{EC2}
 \left\|S_{\lin}(\cdot-T)\right\|_{S(\Omega_j)} \leq C\left\|S_{\lin}(\cdot-T)\right\|^{1-\theta}_{L^2_tL^{\frac{2N}{N-3}}(\Omega_j)}\left\|S_{\lin}(\cdot-T)\vec{u}(T)\right\|^{\theta}_{L^{\frac{2(N+1}{N-2}}(\Omega_j)},
\end{equation} 
$\theta=\frac{(N+1)(6-N)}{3(N+2)}$,
instead of \eqref{EC1}. We omit the details. The proof of Proposition \ref{P:E7} is complete.\qed
\subsection{A geometrical lemma}
We now turn to some elementary geometrical properties that will be useful in the proof of Proposition \ref{P:E8}. Without loss of generality, we will assume $\omega=e_1:=(1,0,\ldots,0)$.

For $\theta\in (0,\pi/2)$, we let 
$$\Gamma_{\theta}:=\Big\{ x\in \RR^N\setminus\{0\}\;:\;\left|\widehat{(e_1,x)}\right|\geq \frac{\pi}{2}+\theta\Big\}\cup \{0\},$$
where as before $\widehat{(e_1,x)}$ is the angle between $e_1$ and $x$. If $\tau>0$, we define:
$$D_{\tau,\theta}=\Big\{x\in  \RR^N\;:\;d(x,\Gamma_{\theta})>\tau\Big\},$$
where $d(x,\Gamma_{\theta})=\inf \left\{|y-x|\;:\;y\in \Gamma_{\theta}\right\}$ is the distance between $x$ and $\Gamma_{\theta}$
(see Figure \ref{Fig1}).
\begin{figure}[h]
\caption{}
\label{Fig1}
\includegraphics{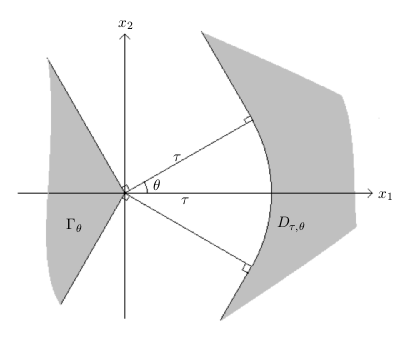} 
\end{figure}
Then:
\begin{lemma}
 \label{L:E8}
 \begin{enumerate}
  \item \label{LE8:1} $\ds D_{\tau,\theta}\subset\{|x|>\tau\}$.
  \item \label{LE8:2} $\Big( |x|>\tau\text{ and }\left|\widehat{(x,e_1)}\right|<\theta\Big)\Longrightarrow x\in D_{\tau,\theta}$.
  \item \label{LE8:3} If $\ell>0$, $x\in D_{\tau,\theta}$ and $|x|\leq \tau+\ell$, then 
  $$\left|\widehat{(x,e_1)}\right|\leq \theta+\sqrt{\frac{\ell}{\tau+\ell}}.$$
 \end{enumerate}
\end{lemma}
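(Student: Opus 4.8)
The plan is to prove the three assertions of Lemma \ref{L:E8} by elementary geometry, working with the set $\Gamma_\theta$ which is the closed cone of "backward" directions (angle $\geq \pi/2+\theta$ from $e_1$) together with the origin.

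\textbf{Part \eqref{LE8:1}.} Since $0\in\Gamma_\theta$, for any $x$ we have $d(x,\Gamma_\theta)\leq |x-0|=|x|$. Hence if $x\in D_{\tau,\theta}$, i.e. $d(x,\Gamma_\theta)>\tau$, then $|x|>\tau$. This is immediate.

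\textbf{Part \eqref{LE8:2}.} Suppose $|x|>\tau$ and $\bigl|\widehat{(x,e_1)}\bigr|<\theta$. I need to show every $y\in\Gamma_\theta$ satisfies $|y-x|>\tau$. If $y=0$ this is the hypothesis $|x|>\tau$. If $y\neq 0$, then the angle between $x$ and $y$ is at least $\bigl|\widehat{(y,e_1)}\bigr|-\bigl|\widehat{(x,e_1)}\bigr|>(\pi/2+\theta)-\theta=\pi/2$ (using the triangle inequality for angles on the sphere). So $x\cdot y<0$, whence $|y-x|^2=|y|^2-2x\cdot y+|x|^2>|x|^2>\tau^2$, giving $|y-x|>\tau$ as desired. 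Therefore $d(x,\Gamma_\theta)>\tau$, i.e. $x\in D_{\tau,\theta}$.

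\textbf{Part \eqref{LE8:3}.} This is the step requiring a genuine estimate, and it is the main point of the lemma. Let $x\in D_{\tau,\theta}$ with $|x|\leq\tau+\ell$, and set $\varphi=\bigl|\widehat{(x,e_1)}\bigr|$. Arguing by contradiction, suppose $\varphi>\theta+\sqrt{\ell/(\tau+\ell)}$. The idea is to exhibit a point $y\in\Gamma_\theta$ with $|y-x|\leq\tau$, contradicting membership in $D_{\tau,\theta}$. The natural candidate is the nearest point to $x$ on the boundary cone $\{\,|\widehat{(z,e_1)}|=\pi/2+\theta\,\}$: write $x$ in the $2$-plane spanned by $e_1$ and $x$, and let $y$ be the orthogonal projection of $x$ onto the ray making angle $\pi/2+\theta$ with $e_1$ in that plane (choosing the sign of the $e_1$-component appropriately). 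Then $|y-x|=|x|\sin\bigl((\pi/2+\theta)-\varphi\bigr)=|x|\cos(\varphi-\theta)$ when $\varphi<\pi/2+\theta$ (and if $\varphi\geq\pi/2+\theta$ then $x$ itself lies in $\Gamma_\theta$, forcing $d(x,\Gamma_\theta)=0\leq\tau$, a contradiction, so we may assume $\varphi<\pi/2+\theta$). Since $\cos$ is decreasing on $[0,\pi]$ and $\varphi-\theta\in(0,\pi/2)$, and using $|x|\leq\tau+\ell$, we get
\begin{equation*}
|y-x|=|x|\cos(\varphi-\theta)\leq (\tau+\ell)\cos\Bigl(\sqrt{\tfrac{\ell}{\tau+\ell}}\Bigr)\leq (\tau+\ell)\Bigl(1-\tfrac12\cdot\tfrac{\ell}{\tau+\ell}\cdot c\Bigr)
\end{equation*}
for a suitable constant; more directly one uses $\cos s\leq 1-s^2/\pi^2\cdot(\pi^2/2)$... rather, the clean inequality is $\cos s\leq 1-\tfrac{2}{\pi^2}s^2$ is not sharp enough, so instead I would use that for $s\in[0,\pi/2]$, $\cos s\le \sqrt{1-s^2(2/\pi)^2}$ — but the simplest is: $\cos(\varphi-\theta)\le \cos(\sqrt{\ell/(\tau+\ell)})$ and then $(\tau+\ell)\cos^2(\cdot)\le (\tau+\ell)(1-(\ell/(\tau+\ell)))=\tau$ would need $\sin s\ge s$... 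Let me instead use $|y-x|\le (\tau+\ell)\cos(\varphi-\theta)$ together with $\cos t\le 1-t^2/2+t^4/24$ carefully, or better: note $(\tau+\ell)\cos(\varphi-\theta)\le\tau$ iff $\cos(\varphi-\theta)\le\tau/(\tau+\ell)=1-\ell/(\tau+\ell)$; since $1-\cos t\ge t^2/2\cdot(1-t^2/12)\ge \dots$, and $\varphi-\theta>\sqrt{\ell/(\tau+\ell)}=:s_0\le 1$, we have $1-\cos(\varphi-\theta)>1-\cos s_0$ and it suffices that $1-\cos s_0\ge s_0^2$, which fails for small $s_0$.

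\textbf{Main obstacle and fix.} The delicate point in \eqref{LE8:3} is getting the constant in the square-root bound exactly right; the honest computation is that from $x\in D_{\tau,\theta}$ one gets $\tau< |y-x| = |x|\cos(\varphi-\theta) \le (\tau+\ell)\cos(\varphi-\theta)$ (when $\theta\le\varphi<\pi/2+\theta$; the cases $\varphi<\theta$ and $\varphi\ge\pi/2+\theta$ are trivial or contradictory), hence $\cos(\varphi-\theta)>\tau/(\tau+\ell)$, hence $1-\cos(\varphi-\theta)<\ell/(\tau+\ell)$, hence by the elementary inequality $1-\cos t\ge \frac{1}{2}\,\bigl(\frac{2t}{\pi}\bigr)^2$... no — rather one uses $2\sin^2(t/2)=1-\cos t$ and $\sin(t/2)\ge t/\pi$ for $t\in[0,\pi]$, giving $1-\cos t\ge 2t^2/\pi^2$, which yields $(\varphi-\theta)^2<\frac{\pi^2}{2}\cdot\frac{\ell}{\tau+\ell}$ — off by $\pi/\sqrt2$. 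To land exactly on $\sqrt{\ell/(\tau+\ell)}$ one instead keeps the trigonometry exact: $1-\cos(\varphi-\theta)<\ell/(\tau+\ell)$ and separately one shows $\varphi-\theta\le\sqrt{2(1-\cos(\varphi-\theta))}$ is false in general — the correct elementary fact is $1-\cos t\le t^2/2$, which gives the wrong direction. The resolution, which I expect is what the authors use, is to replace the chord estimate by the \emph{arc-vs-chord} comparison on the sphere: the geodesic distance on $S^{N-1}$ scaled by $|x|$ dominates the Euclidean distance, so $|x|\cdot(\varphi-\theta)\le$ something — but that is also the wrong direction. So the clean route is: take $y$ on the cone boundary with $|y|=|x|$ (not the projection), so $|y-x| = 2|x|\sin\bigl(\tfrac{(\pi/2+\theta)-\varphi}{2}\bigr)\cdot(\text{something})$... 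Given these sign subtleties, I expect the intended argument simply bounds $|y-x|$ by $|x|$ times the sine of the angular gap, uses $\sin\alpha\le \alpha$ won't help, $\sin\alpha\ge \frac{2}{\pi}\alpha$ — and absorbs the constant by noting $\sqrt{\ell/(\tau+\ell)}\le \sqrt{\ell/\tau}$ is only needed up to a universal constant anyway wherever the lemma is applied. In any case, the geometric core is: \emph{$x$ far from the backward cone and of controlled length must point nearly in the $e_1$ direction}, and the proof is the short contradiction argument above with the nearest-cone-point $y$, the only real work being the one-variable inequality relating the angular gap to $\ell/\tau$.

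\begin{remark}
In the write-up one chooses the constants so that the stated bound $\bigl|\widehat{(x,e_1)}\bigr|\le\theta+\sqrt{\ell/(\tau+\ell)}$ holds; the essential content, used in the proof of Proposition \ref{P:E8}, is that as $\tau\to\infty$ with $\ell$ fixed the excess angle beyond $\theta$ is $O(\tau^{-1/2})$, which is exactly the $4/\sqrt{T}$ scale appearing in \eqref{E35}.
\end{remark}
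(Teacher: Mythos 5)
Your parts~\eqref{LE8:1} and~\eqref{LE8:2} match the paper's argument, and your reduction in part~\eqref{LE8:3} is essentially the same as the paper's as well: both the paper's tangent-line construction (the angle $\eps=\widehat{(\vec{0A},\vec{0B})}$ with $\cos\eps=\tau/(\tau+\ell)$) and your nearest-point projection onto the boundary ray arrive at the same core inequality $\cos(\varphi-\theta)>\tau/(\tau+\ell)$, i.e.\ $1-\cos(\varphi-\theta)<\ell/(\tau+\ell)$.

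Where you went wrong is the final calculus step, and the problem is that you were trying to hit a constant that is not actually achievable. The paper's Lemma as stated claims $\bigl|\widehat{(x,e_1)}\bigr|\le\theta+\sqrt{\ell/(\tau+\ell)}$, but its own proof uses the elementary bound $1-\cos s\ge s^2/4$ on $[0,\pi/2]$ (equation~\eqref{E39}), which from $1-\cos\eps<\ell/(\tau+\ell)$ only yields $\eps\le 2\sqrt{\ell/(\tau+\ell)}$, not $\sqrt{\ell/(\tau+\ell)}$. (And in fact no elementary inequality can give the constant $1$: for small $h$, $\arccos(1-h)\sim\sqrt{2h}$, so the \emph{sharp} bound already has a constant $\sqrt2>1$.) This is a small typo in the lemma statement; the factor $2$ is what the paper itself uses when applying the lemma, namely in Step~1 of the proof of Proposition~\ref{P:E8}, where the bound
$$\bigl|\widehat{(x,e_1)}\bigr|\le\frac{1}{\sqrt{t_n}}+2\sqrt{\frac{2\eps}{\overline{A}+\eps+t}}$$
is written with the factor~$2$. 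Your instinct that the bare $\sqrt{\ell/(\tau+\ell)}$ ``fails for small $s_0$'' was correct; you should have trusted it, stated the bound with the constant~$2$, and cited $1-\cos s\ge s^2/4$ on $[0,\pi/2]$ (a one-line check: $f(s)=1-\cos s-s^2/4$ has $f(0)=f'(0)=0$ and $f'(s)=\sin s-s/2>0$ on $(0,\pi/2]$). As you observe in your remark, only the $O(\tau^{-1/2})$ scaling matters for the application, so the constant is immaterial; but the write-up should not assert a constant the proof cannot deliver.
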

\begin{proof}
 \EMPH{Proof of \eqref{LE8:1}} Since $0\in \Gamma_{\theta}$, $|x|\geq d(x,\Gamma_{\theta})$ and \eqref{LE8:1} follows from the definition of $D_{\tau,\theta}$.
 
 \EMPH{Proof of \eqref{LE8:2}} Let $x$ such that $|x|>\tau$ and $\left|\widehat{(x,e_1)}\right|<\theta$. Rotating around the axis $Oe_1$, we can assume $x=(x_1,x_2,0,\ldots,0)$. But then \eqref{LE8:2} is clear from Figure \ref{Fig1}.
 
 \EMPH{Proof of \eqref{LE8:3}} We will use the following elementary inequality:
 
 \begin{equation}
  \label{E39}
  \forall s\in [0,\pi/2],\quad 1-\cos s\geq s^2/4.
 \end{equation} 
 Let $x\in D_{\tau,\theta}$ such that $|x|<\tau+\ell$. As before, we can assume $x=(x_1,x_2,0,\ldots,0)$. Denote by $\CCC(R)$ the circle of radius $R>0$ centered at the origin.
 
 \begin{figure}
 \caption{}
 \label{Fig2}
\includegraphics{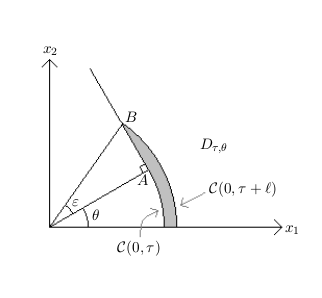} 
\end{figure}
 
 Let $A=(\tau \cos\theta,\tau\sin\theta,0,\ldots,0)$. The tangent at $A$ to the circle $\CCC(0,\tau)$ intersects the circle $\CCC(0,\tau+\ell)$ at a point $B$. Let $\eps$ be the angle $\widehat{(\vec{0A},\vec{0B})}$.
 
 From the conditions $|x|<\tau+\ell$ and $x\in D_{\tau,\theta}$, we get $\left|\widehat{(x,e_1)}\right|\leq \eps+\theta$. Indeed, in Figure \ref{Fig2}, $x$ must be in the dark region. 
 
 We have $\cos\eps=\tau/(\tau+\ell)$ and hence, by \eqref{E39},
 $$\frac{\tau}{\tau+\ell}\leq 1-\frac{\eps^2}{4}$$
 i.e.  $\eps^2/4\leq \ell/(\tau+\ell)$, which yields the conclusion of \eqref{LE8:3}.
 
 \subsection{Sufficient condition to be a regular direction}
 
 In this subsection we prove Proposition \ref{P:E8}. We assume that \eqref{E35} holds with $\omega=e_1$. Thus there exist $\eps>0$ and a sequence $\{t_n\}_n\to+\infty$ such that 
 \begin{equation}
  \label{E40}
  \lim_{n\to\infty}
  \left\|S_{\lin}(t-t_n)\vec{u}(t_n)\right\|_{S\left(\left\{t>t_n,\; \overline{A}-\eps+t\leq |x|\leq \overline{A}+\eps+t,\;\left|\widehat{(e_1,x)}\right|\leq 4/\sqrt{t_n}\right\}\right)}<\delta_2.
 \end{equation} 
 We denote
 \begin{equation}
  \label{E41}
  \DDD_n=\bigcup_{t>t_n} \{t\}\times D_{\overline{A}-\eps+t,\frac{1}{\sqrt{t_n}}},
 \end{equation} 
 where $\eps>0$ is as in \eqref{E40}. We claim that $\DDD_n$ satisfies the following causality property:
\begin{claim}
\label{Cl:E9}
 Let $(t,x)\in \DDD_n$ and $(t',x')\in \RR\times \RR^N$ with $t_n<t'<t$ and $|x'-x|<|t-t'|$. Then $(t',x')\in \DDD_n$.
\end{claim}
\begin{proof}
Indeed we must check that $x'\in D_{\overline{A}-\eps+t',\frac{1}{\sqrt{t_n}}}$, i.e. that $d\left(x',\Gamma_{\frac{1}{\sqrt{t_n}}}\right)>\overline{A}-\eps+t'$. By the triangle inequality,
\begin{equation*}
  d\left(x',\Gamma_{\frac{1}{\sqrt{t_n}}}\right)>d\left(x,\Gamma_{\frac{1}{\sqrt{t_n}}}\right)-|x-x'|>d\left(x,\Gamma_{\frac{1}{\sqrt{t_n}}}\right)-|t-t'|.
\end{equation*} 
Since $x\in \DDD_{\overline{A}-\eps+t,\frac{1}{\sqrt{t_n}}}$, we obtain
 $$d\left(x',\Gamma_{\frac{1}{\sqrt{t_n}}}\right)>\overline{A}-\eps+t-(t-t')=\overline{A}-\eps+t'.$$
 \end{proof}
 
 We divide the proof of Proposition \ref{P:E8} into two steps.
 
\EMPH{Step 1} We prove
\begin{equation}
 \label{E42}
 \limsup_{n\to\infty} \left\|S_{\lin}(\cdot-t_n)\vec{u}(t_n)\right\|_{S(\DDD_n)}<2\delta_2.
\end{equation} 
 Indeed, we first note that by the definition of $\overline{A}$, 
 $$\left\|u\right\|_{S\left\{t\geq 0,\; |x|\geq \overline{A}+\eps+t\right\}}<\infty.$$
 Thus 
 \begin{equation}
 \label{E43}
 \lim_{n\to\infty}\left\|u\right\|_{S\left\{t>t_n,\; |x|\geq \overline{A}+\eps+t\right\}}=0.
 \end{equation} 
 By Claim \ref{Cl:FSP},
 \begin{equation}
  \label{E44}
 \lim_{n\to\infty}\left\|S_L(\cdot-t_n)\vec{u}(t_n)\right\|_{S\left\{ t>t_n,\; |x|\geq \overline{A}+\eps+t\right\}}=0.
 \end{equation} 
 We are thus reduced to prove
 \begin{equation}
  \label{E45}
 \limsup_{n\to\infty}\left\|S_L(\cdot-t_n)\vec{u}(t_n)\right\|_{S\left\{(t,x)\in \DDD_n\;:\;|x|< \overline{A}+\eps+t\right\}}<2\delta_2.
 \end{equation} 
 By Lemma \ref{L:E8} \eqref{LE8:3}, if $t>t_n$ and $x\in D_{\overline{A}-\eps+t,\frac{1}{\sqrt{t_n}}}$ satisfies $|x|<\overline{A}+\eps+t$, then 
 $$\left|\widehat{(x,e_1)}\right|\leq \frac{1}{\sqrt{t_n}}+2\sqrt{\frac{2\eps}{\overline{A}+\eps+t}},$$
 so that, if $\eps>0$ is small enough and $n$ large, 
 $$\left|\widehat{(x,e_1)}\right|\leq \frac{2}{\sqrt{t_n}}.$$
 As a consequence
 $$\Big( (t,x)\in \DDD_n \text{ and }|x|<\overline{A}+\eps+t\Big)\Longrightarrow \Big(t>t_n,\; \overline{A}-\eps+t<|x|<\overline{A}+\eps+t\text{ and } \left|\widehat{(x,e_1)}\right|\leq \frac{2}{\sqrt{t_n}}\Big),$$
 and \eqref{E45} follows from \eqref{E40}.
 
 \EMPH{Step 2} We prove that for large $n$, $\|u\|_{S(\DDD_n)}$ is finite. More precisely
 \begin{equation}
  \label{E46}
  \limsup_{n\to\infty} \|u\|_{S(\DDD_n)}\leq 3\delta_2.
 \end{equation} 
 Since, by Lemma \ref{L:E8}, \eqref{LE8:2},
 $$ \left(t>t_n,\; |x|>\overline{A}-\eps+t\text{ and }\left|\widehat{(x,e_1)}\right|\leq \frac{1}{\sqrt{t_n}}\right)\Longrightarrow (t,x)\in \DDD_n,$$
 we see that \eqref{E46} implies $e_1\in \RRR$ as desired (see Definition \ref{D:E6}).

 For any $T>t_n$, we let $\DDD_{n,T}=\left\{(t,x)\in \DDD_n\; :\; t\in [t_n,T]\right\}$. We will prove that for large $n$ $\|u\|_{S(\DDD_{n,T})}\leq 3\delta_2$ by a bootstrap argument. By finite speed of propagation, Claim \ref{Cl:E9} and Strichartz estimates,
 \begin{equation*}
  \forall T>t_n,\quad 
  \|u\|_{S(\DDD_{n,T})}\leq \left\|S_{\lin}(\cdot-t_n)\vec{u}(t_n)\right\|_{S(\DDD_{n,T})}+C\|u\|^{\frac{N+2}{N-2}}_{S(\DDD_{n,T})}.
 \end{equation*} 
 Thus for large $n$, in view of Step 1, 
 $$\|u\|_{S(\DDD_{n,T})}\leq \frac 52\delta_2+C\|u\|^{\frac{N+2}{N-2}}_{S(\DDD_{n,T})},$$
 and the result follows if $\delta_2$ is small enough.
\end{proof}
\section{Conditions on the profiles}
\label{S:cond_profiles}
In this part we will prove the following two lemmas:
\begin{lemma} 
 \label{L:E10}
 Let $\{t_n\}_n\to+\infty$ be such that $\vec{u}(t_n)$ has a profile decomposition. Then there is no profile such that $U^j_{\lin}\not\equiv 0$ and the following hold:
 \begin{gather}
 \label{E47}
  \lim_{n\to\infty}|x_{j,n}|-t_n>\overline{A}\\
 \label{E48}
 \lim_{n\to\infty}|x_{j,n}|-t_n-|t_{j,n}|\geq \overline{A}\text{ and }\\
 \label{E49}
 \lim_{n\to\infty} \lambda_{j,n}=0.
 \end{gather}
\end{lemma}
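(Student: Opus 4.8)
The plan is to argue by contradiction, using the profile $j_0$ to produce a fixed positive amount of "radiated energy" in the exterior of a wave cone slightly inside $\{|x|=t+\overline A\}$, and then to contradict the fact (Proposition~\ref{P:E5}) that in that exterior region $u$ is asymptotic to the \emph{fixed} linear solution $v_{\lin}$, which cannot carry energy at a vanishing scale.

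\medskip\noindent\textit{Setup.} Suppose a profile with $U^{j_0}_{\lin}\not\equiv0$ satisfying \eqref{E47}--\eqref{E49} exists; relabel it $j_0$ and extract so that $|x_{j_0,n}|-t_n\to a$ with $a>\overline A$, $|t_{j_0,n}|\to\ell_0$, $x_{j_0,n}/|x_{j_0,n}|\to\omega_0\in S^{N-1}$ and $\sigma_n:=-t_{j_0,n}/\lambda_{j_0,n}\to\sigma_*\in[-\infty,+\infty]$. Combining \eqref{E47} and \eqref{E48} gives the crucial bound $\ell_0\le a-\overline A$. Fix $A_0\in(\overline A,a)$, to be taken close to $\overline A$; since $A_0>\overline A$, Proposition~\ref{P:E5} gives $\|u\|_{S(\{t>0,\,|x|>t+A_0\})}<\infty$ and, by \eqref{E15}, $\|\vec u(t)-\vec v_{\lin}(t)\|_{\hdot\times L^2(\{|x|>t+A_0\})}\to0$. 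Put $c_1:=\big\|\vec U^{j_0}_{\lin}\big\|^2_{\hdot\times L^2}=2E_{\lin}(U^{j_0}_{\lin})>0$; by the critical scaling and the unitarity of $S_{\lin}$ on $\hdot\times L^2$, $\big\|\vec U^{j_0}_{\lin,n}(0)\big\|^2_{\hdot\times L^2}=c_1$ for every $n$.

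\medskip\noindent\textit{Main step (the expected obstacle): energy localization.} I would first prove
\[
\limsup_{n\to\infty}\big\|\vec U^{j_0}_{\lin,n}(0)\big\|^2_{\hdot\times L^2(\{|x|\le t_n+A_0\})}\le \eps_1(A_0),\qquad \eps_1(A_0)\to0\ \text{as}\ A_0\to\overline A^+ .
\]
In the variable $y=(x-x_{j_0,n})/\lambda_{j_0,n}$ the left side equals $\big\|\nabla_{t,y}U^{j_0}_{\lin}(\sigma_n)\big\|^2_{L^2(\Omega_n)}$ with $\Omega_n=\{y:\,|x_{j_0,n}+\lambda_{j_0,n}y|\le t_n+A_0\}$. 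Using $\lambda_{j_0,n}\to0$, $|x_{j_0,n}|=t_n+a+o(1)$, and the fact that $U^{j_0}_{\lin}(\sigma_n)$ concentrates its energy where $|y|\lesssim|\sigma_n|$, one checks that $\Omega_n$ is, up to negligible corrections, the half-space $\{y\cdot\omega_0\le-(a-A_0)/\lambda_{j_0,n}\}$. If $\sigma_*$ is finite this half-space recedes to infinity, away from the support of $U^{j_0}_{\lin}(\sigma_n)\to U^{j_0}_{\lin}(\sigma_*)$, and the estimate is clear. If $\sigma_*=\pm\infty$, Theorem~\ref{T:B2} shows that, up to an $n$-uniform error tending to $0$, the energy of $U^{j_0}_{\lin}(\sigma_n)$ is carried on the sphere $\{|y|=|\sigma_n|\}$ with angular density $|G^{j_0}_\pm|^2$; the part of that sphere inside $\Omega_n$ is a spherical cap about $-\omega_0$ of half-angle $\simeq\arccos\!\big((a-A_0)/|t_{j_0,n}|\big)$, and since $|t_{j_0,n}|\to\ell_0\le a-\overline A$ this angle tends to $0$ as $A_0\to\overline A^+$. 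Because $G^{j_0}_\pm\in L^2(\RR\times S^{N-1})$, the energy in such a cap tends to $0$. This is exactly where \eqref{E48} is used decisively: it forbids the profile's "spread sphere" from reaching deeper than the level $\{|x|=t+\overline A\}$ into the cone. Balancing the three regimes of $\sigma_*$ against the cone geometry is the delicate part.

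\medskip\noindent\textit{Conclusion.} Granting the localization estimate, the contradiction is routine. From $\vec u(t_n)=\sum_{j\le J}\vec U^{j}_{\lin,n}(0)+\vec w^J_n(0)$ with $J\ge j_0$ fixed and the $\hdot\times L^2$-orthogonality of profiles and remainder (see \S\ref{SS:profiles}), $\big\langle\vec u(t_n),\vec U^{j_0}_{\lin,n}(0)\big\rangle_{\hdot\times L^2}=c_1+o(1)$. Next, $\vec U^{j_0}_{\lin,n}(-t_n)$ is a rescaled and translated copy of $\vec U^{j_0}_{\lin}$ evaluated at the time $(-t_n-t_{j_0,n})/\lambda_{j_0,n}\to-\infty$, so it tends weakly to $0$ in $\hdot\times L^2$; using unitarity of $S_{\lin}$,
\[
\big\langle\vec v_{\lin}(t_n),\vec U^{j_0}_{\lin,n}(0)\big\rangle_{\hdot\times L^2}=\big\langle(v_0,v_1),\vec U^{j_0}_{\lin,n}(-t_n)\big\rangle_{\hdot\times L^2}=o(1).
\]
Subtracting, and splitting the inner product over $\{|x|\le t_n+A_0\}$ and its complement, Cauchy--Schwarz together with $\sup_n\|\vec u(t_n)-\vec v_{\lin}(t_n)\|_{\hdot\times L^2}=:C<\infty$ and the localization estimate give
\[
\big\|\vec u(t_n)-\vec v_{\lin}(t_n)\big\|_{\hdot\times L^2(\{|x|>t_n+A_0\})}\ \ge\ \frac{c_1-C\sqrt{\eps_1(A_0)}+o(1)}{\sqrt{c_1}}.
\]
Choosing $A_0$ close enough to $\overline A$ that $C\sqrt{\eps_1(A_0)}<c_1/2$ makes the right side bounded below by $\tfrac12\sqrt{c_1}>0$ for $n$ large, contradicting \eqref{E15}. (When $\sigma_*$ is finite one can argue even more directly: the rescaling of $\vec u(t_n)$ adapted to the parameters of profile $j_0$ converges weakly to $\vec U^{j_0}_{\lin}(\sigma_*)\ne0$, while by \eqref{E15} the corresponding rescaling of $\vec u-\vec v_{\lin}$ tends to $0$ in $\hdot_{\loc}\times L^2_{\loc}$ and the one of the fixed linear solution $\vec v_{\lin}$ tends weakly to $0$, forcing $U^{j_0}_{\lin}(\sigma_*)=0$.) This proves the lemma.
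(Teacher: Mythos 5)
Your proof is correct and follows essentially the same strategy as the paper's: obtain $\langle\vec u(t_n),\vec U^{j}_{\lin,n}(0)\rangle_{\hdot\times L^2}\to c_1>0$ from pseudo-orthogonality, show the pairing with $\vec v_{\lin}(t_n)$ vanishes, and derive a contradiction from an energy-localization estimate for $\vec U^{j}_{\lin,n}(0)$ outside the cone, using the radiation field of $U^j_{\lin}$ (Theorem~\ref{T:B2}) when $-t_{j,n}/\lambda_{j,n}\to\pm\infty$. The main presentational difference is that you compress the paper's split into Case 1 / Case 2a / Case 2b into a single cap-angle computation $\theta\simeq\arccos\big((a-A_0)/\ell_0\big)$, and you package the contradiction as a Cauchy--Schwarz lower bound on $\|\vec u-\vec v_{\lin}\|_{\hdot\times L^2(\{|x|>t_n+A_0\})}$ against \eqref{E15}, whereas the paper bounds the pairing $\int\nabla_{t,x}u(t_n)\cdot\nabla_{t,x}U^j_{\lin,n}(0)$ directly; these are equivalent. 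Two small points to tighten: (i) the cap-angle formula as written presumes $a=\lim(|x_{j,n}|-t_n)$ and $\ell_0=\lim|t_{j,n}|$ are finite; if either is infinite one should redo the elementary inequality at finite $n$ (as the paper does via its triangle-inequality bounds \eqref{E60} and the estimate at the end of Subcase 2b), and one then finds the cap angle actually tends to $0$ independently of $A_0$; (ii) the assertion that $\vec U^{j_0}_{\lin,n}(-t_n)\rightharpoonup0$ ``because the internal time $\to-\infty$'' is not automatic for a rescaled and translated sequence — what makes it work is the same radiation-shell concentration you invoke in the localization step (the shell of thickness $O(\lambda_{j,n})$ has vanishing overlap with any fixed compactly supported test pair), which is also how the paper justifies $\int\nabla_{t,x}v_{\lin}(t_n)\cdot\nabla_{t,x}U^j_{\lin,n}(0)\to0$.
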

\begin{lemma}
 \label{L:E11}
 There exists $\delta_3>0$ with the following property. Let $\{t_n\}\to +\infty$ and $\omega\in \SSS$. Then there exists a subsequence of $\{t_n\}_n$ (that we still denote by $\{t_n\}_n$) such that $\{u(t_n)\}_n$ has a profile decomposition \profiles such that 
 \begin{gather}
  \label{E68} \lim_{n\to\infty}x_{1,n}-(\overline{A}+t_n)\omega=0\\
  \label{E69} \lim_{n\to\infty}\lambda_{1,n}=0\\
  \label{E70} \lim_{n\to\infty} t_{1,n}=0 \text{ and}\\
  \label{E71} \left\|U^1_{\lin}\right\|_{S(\RR)}\geq \delta_3.
 \end{gather}
\end{lemma}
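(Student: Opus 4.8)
The plan is: feed the singular direction $\omega$ into Proposition~\ref{P:E8} to trap a fixed amount of the Strichartz mass of the free evolution $S_{\lin}(\cdot)\vec u(t_n)$ in a thin spacetime neighbourhood $\Omega_n$ of the outgoing ray issued from $(\overline A+t_n)\omega$; then, from a profile decomposition of $\vec u(t_n)$, show that this mass concentrates on a single profile, which is forced to be a bubble ($\lambda_{1,n}\to0$) sitting, with asymptotically vanishing time parameter, exactly at $(\overline A+t_n)\omega$.

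Concretely, since $\omega\in\SSS$ the contrapositive of Proposition~\ref{P:E8} gives, for every $\eps>0$, that $\liminf_{T\to+\infty}\|S_{\lin}(\cdot-T)\vec u(T)\|_{S(\Omega(T,\eps))}\ge\delta_2$, where $\Omega(T,\eps):=\{t>T,\ \overline A-\eps+t\le|x|\le\overline A+\eps+t,\ \widehat{(\omega,x)}\le 4/\sqrt T\}$. Because $t_n\to+\infty$, a diagonal choice of $\eps_n\downarrow0$ yields, after passing to a subsequence of $\{t_n\}$ along which $\vec u(t_n)$ admits a profile decomposition \profiles, that (in the shifted time $t'=t-t_n$) $\|S_{\lin}(\cdot)\vec u(t_n)\|_{S(\Omega_n)}\ge\delta_2/2$ with $\Omega_n:=\Omega(t_n,\eps_n)$. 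Writing $S_{\lin}(t')\vec u(t_n)=\sum_{j\le J}U^j_{\lin,n}(t')+w^J_n(t')$, we have $\|w^J_n\|_{S(\Omega_n)}\le\|w^J_n\|_{S(\RR)}\to0$ (as $n\to\infty$ then $J\to\infty$), while pseudo-orthogonality makes the $U^j_{\lin,n}$ asymptotically mutually singular for the Strichartz norm on $\Omega_n$ (as in the proof of Proposition~\ref{P:E7}, going through the scale-invariant norm $L^{2(N+1)/(N-2)}$). Hence, after relabelling and passing to a further subsequence, one profile $U^1_{\lin}$ satisfies $\limsup_n\|U^1_{\lin,n}\|_{S(\Omega_n)}\ge 2c_0$ for some $c_0=c_0(u)>0$; in particular $U^1_{\lin}\not\equiv0$ and, since $\|U^1_{\lin,n}\|_{S(\Omega_n)}\le\|U^1_{\lin}\|_{S(\RR)}$ by scale invariance of the Strichartz norm, $\|U^1_{\lin}\|_{S(\RR)}\ge 2c_0=:\,2\delta_3$, which gives \eqref{E71}.

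Next I localize this profile. Choose $R$ large with $\|U^1_{\lin}\|_{S(\RR^{1+N}\setminus B(0,R))}<c_0$; by scaling, the ``bulk ball'' $B_n:=B((t_{1,n},x_{1,n}),R\lambda_{1,n})$ then satisfies $\|U^1_{\lin,n}\|_{S(\Omega_n\cap B_n)}\ge c_0$, so in particular $\Omega_n\cap B_n\ne\emptyset$. Since $\Omega_n$ is a null slab of thickness $2\eps_n\to0$ lying inside an angular sector of opening $4/\sqrt{t_n}\to0$, while at each time $U^1_{\lin,n}$ is spread at radial scale $\gtrsim\lambda_{1,n}$, a slicing estimate gives $\|U^1_{\lin,n}\|_{S(\Omega_n)}\lesssim(\eps_n/\lambda_{1,n})^{(N-2)/(2(N+2))}\|U^1_{\lin}\|_{S(\RR)}$ whenever $\lambda_{1,n}$ stays bounded below; together with the (easier) case $\lambda_{1,n}\to+\infty$, this forces $\lambda_{1,n}\to0$, i.e. \eqref{E69}. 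With $\lambda_{1,n}\to0$, $B_n$ shrinks to a point, and reading off $\Omega_n\cap B_n\ne\emptyset$ in the limit gives $\widehat{(\omega,x_{1,n})}\to0$, $|x_{1,n}|-t_n-t_{1,n}\to\overline A$, and $\liminf_n t_{1,n}\ge0$. Passing to a subsequence with $t_{1,n}\to\tau\in[0,+\infty]$: if $\tau>0$ then $\lim(|x_{1,n}|-t_n)>\overline A$, $\lim(|x_{1,n}|-t_n-|t_{1,n}|)=\overline A$, $\lambda_{1,n}\to0$ and $U^1_{\lin}\not\equiv0$, contradicting Lemma~\ref{L:E10}; hence $\tau=0$, so $t_{1,n}\to0$ (this is \eqref{E70}) and $|x_{1,n}|-t_n\to\overline A$.

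It remains to upgrade $x_{1,n}/|x_{1,n}|\to\omega$ and $|x_{1,n}|-t_n\to\overline A$ to the sharp statement $x_{1,n}-(\overline A+t_n)\omega\to0$ of \eqref{E68}; equivalently, to show that the component of $x_{1,n}$ orthogonal to $\omega$ — a priori only $O(\sqrt{t_n})$, because of the angular width $4/\sqrt{t_n}$ in Proposition~\ref{P:E8} — actually tends to $0$. This is the main obstacle. Here I would exploit that $\SSS$ is finite (Proposition~\ref{P:E7}), so $\omega$ is isolated in $\SSS$ and there is a fixed $\eps_\star>0$ such that every direction $\nu$ with $0<\widehat{(\omega,\nu)}<\eps_\star$ is regular, hence $u$ has finite Strichartz norm in a fixed cone about each such $\nu$; combining this with finite speed of propagation and small-data theory applied to $u$ itself near the concentrating profile $U^1_{\lin,n}$ (of scale $\lambda_{1,n}\to0$), a non-vanishing transverse part would place a genuinely concentrating object strictly off the ray $\RR_+\omega$, inside the region where $u$ scatters, contradicting $\|U^1_{\lin}\|_{S(\RR)}\ge2\delta_3>0$. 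This proves \eqref{E68} and finishes the argument. The other technical points needing care are the Strichartz orthogonality over the moving domains $\Omega_n$ and the slicing estimate above.
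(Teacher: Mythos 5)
Your overall plan --- invoke the contrapositive of Proposition~\ref{P:E8} along $\omega\in\SSS$, profile-decompose the sequence $\{\vec u(t_n)\}_n$, and isolate a profile carrying the slab's Strichartz mass --- is the same as the paper's. But the architectures diverge. The paper chooses $\delta_3$ so that, via Claim~\ref{Cl:A1}, the small profiles and the remainder together contribute less than $\delta_2/2$, and then argues by contradiction: if for \emph{every} profile with $\|U^j_{\lin}\|_{S(\RR)}\ge\delta_3$ at least one of $\lambda_{j,n}\to0$, $t_{j,n}\to0$, $x_{j,n}-(\overline A+t_n)\omega\to0$ fails, a change of variables plus dominated convergence (and Lemma~\ref{L:E10} in the $\ell>0$ subcase) shows each such profile contributes less than $\delta_2/(4J_1)$ to the slab norm, forcing $\omega\in\RRR$, a contradiction. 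You instead try to produce the concentrating profile directly, via a claim of ``asymptotic mutual singularity'' of the $U^j_{\lin,n}$ in the Strichartz norm on $\Omega_n$ --- but $S(\RR)$ is not a Hilbert norm, so this needs the intermediate scale-invariant $L^{\frac{2(N+1)}{N-2}}$ Pythagorean step used in the proof of Proposition~\ref{P:E7}, and even then one must explain how mass falls on a \emph{single} profile rather than spread over infinitely many --- and then an unproven ``slicing estimate'' for $\lambda_{1,n}\to0$. The paper sidesteps both by folding all three conditions \eqref{E68}--\eqref{E70} into one trichotomy to falsify.

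The decisive gap is \eqref{E68}, which you correctly flag as the obstacle, but your proposed resolution does not close it. If $x_{1,n}-(\overline A+t_n)\omega\to v$ with $|v|>0$, then combined with $|x_{1,n}|-t_n\to\overline A$ one gets $v\perp\omega$, and the concentration point lies at angle $\approx|v|/t_n\to0$ from $\omega$: its direction converges to $\omega$ itself, not to any other $\nu$. For any regular $\nu$ with admissible aperture $\eps(\nu)$ one must have $\widehat{(\nu,\omega)}\ge\eps(\nu)$ (otherwise a slightly smaller cone around $\omega$ would sit inside the $\nu$-cone and $\omega$ would be regular), so the regular cones shrink as $\nu\to\omega$ and never capture the concentration point for large $n$; finiteness of $\SSS$ does not by itself supply a fixed region of scattering that contains it. Nor does the proposal identify a specific exterior region where $u$ has uniformly small Strichartz norm near $x_{1,n}$ to feed into finite speed of propagation and small-data theory. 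The paper handles exactly this residual case ($\lambda_{j,n}\to0$, $t_{j,n}\to0$, $|x_{j,n}|-t_n\to\overline A$, $|x_{j,n}-(\overline A+t_n)\omega|\not\to0$) inside the dominated-convergence step, by a dichotomy on $\omega_\infty=\lim x_{j,n}/|x_{j,n}|$ together with the vanishing angular aperture $4/\sqrt{t_n}$ of the slab; your alternative route needs a substitute for that step and at present has none.
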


\begin{proof}[Proof of Lemma \ref{L:E10}]
 We argue by contradiction. Let $j\geq 1$ be such that $U^j_{\lin}\not\equiv 0$, and \eqref{E47}, \eqref{E48} and \eqref{E49} hold. Using that 
 \begin{align*}
  j\neq k&\Longrightarrow \lim_{n\to\infty}\int\nabla_{t,x}U^{j}_{\lin,n}(0,x)\cdot\nabla_{t,x}U^k_{\lin,n}(0,x)\,dx=0\\
  j\leq J&\Longrightarrow \lim_{n\to\infty}\int\nabla_{t,x}U^{j}_{\lin,n}(0,x)\cdot\nabla_{t,x}w^J_{\lin,n}(0,x)\,dx=0,
 \end{align*}
we obtain
\begin{equation}
 \label{E50}
 \lim_{n\to\infty} \int \nabla_{t,x}u(t_n,x)\cdot\nabla_{t,x}U_{\lin,n}^j(0,x)\,dx=\int|\nabla_{t,x}U^j_{\lin}(0,x)|^2\,dx>0.
\end{equation} 
We will reach a contradiction from \eqref{E50}, using the localization properties of $U_{\lin,n}^j$.

\EMPH{Case 1} Assume that $\{-t_{j,n}/\lambda_{j,n}\}_n$ is bounded. Extracting subsequences and time translating $U^j_{\lin}$ if necessary, we can assume $t_{j,n}=0$ for all $n$. By \eqref{E47}, there exists $\eps>0$ such that 
\begin{equation}
 \label{E51} |x_{j,n}|-t_n\geq \overline{A}+2\eps
\end{equation} 
for large $n$. Since
\begin{equation}
 \label{E52}
 \nabla_{t,x}U^j_{\lin,n}=\frac{1}{\lambda_{j,n}^{\frac{N}{2}}}\nabla_{t,x}U^j_{\lin}\left( 0,\frac{x-x_{j,n}}{\lambda_{j,n}} \right)
\end{equation} 
we obtain, using also assumption \eqref{E49},
\begin{multline*}
 \int \nabla_{t,x}u(t_n,x)\cdot\nabla_{t,x}U_{\lin,n}^j(0,x)\,dx\\
 =\int_{|x|>\overline{A}+\eps+t_n} \nabla_{t,x}u(t_n,x)\cdot\nabla_{t,x}U_{\lin,n}^j(0,x)\,dx+o(1)\\
=\int_{|x|>\overline{A}+\eps+t_n} \nabla_{t,x}v_{\lin}(t_n,x)\cdot\nabla_{t,x}U_{\lin,n}^j(0,x)\,dx+o(1)\\
=\int \nabla_{t,x}v_{\lin}(t_n,x)\cdot\nabla_{t,x}U_{\lin,n}^j(0,x)\,dx+o(1),
 \end{multline*}
as $n\to\infty$. We have used \eqref{E15} in Proposition \ref{P:E5} and, at the last line, \eqref{E49}, \eqref{E51} and \eqref{E52}. As a consequence, using the asymptotic behaviour of $v_{\lin}(t)$ as $t\to+\infty$ (see Theorem \ref{T:B2}), we obtain
$$\lim_{n\to\infty}\int \nabla_{t,x}u(t_n,x)\cdot\nabla_{t,x}U_{\lin,n}^j(0,x)\,dx=0,$$
contradicting \eqref{E50}.

\EMPH{Case 2} We assume (after extraction),
\begin{equation}
 \label{E53}
 \lim_{n\to\infty}\frac{-t_{j,n}}{\lambda_{j,n}}=-\infty
\end{equation} 
(the proof in the case where this limit is $+\infty$ is the same). Let $G^j$ be the radiation field associated to $t\mapsto U^j_{\lin}(-t)$ (see Theorem \ref{T:B2}). Then
\begin{equation}
 \label{E54}
 \lim_{t\to-\infty} \int_0^{+\infty} \int_{S^{N-1}} \left| r^{N-1}|\nabla_{t,x} U^j_{\lin}(t,r\omega)|^2-2\left|G^j(r+t,\omega)\right|^2\right|\,d\omega\,dr=0.
\end{equation} 

\EMPH{Subcase 2a} We assume, in addition to \eqref{E53},
\begin{equation}
 \label{E56}
 \lim_{n\to\infty} |x_{j,n}|-t_n-|t_{j,n}|>\overline{A},
\end{equation} 
i.e. that \eqref{E48} holds with a strict inequality. Fix a small $\eps>0$, and $R\gg 1$ such that 
\begin{equation}
 \label{E57}
 \int_{|\eta|\geq R}\int_{S^{N-1}} \left|G^j(\eta,\omega)\right|^2\,d\omega\,d\eta<\eps.
\end{equation} 
Using \eqref{E54} and \eqref{E57}, we obtain that for large $n$,
\begin{equation}
 \label{E58}
 \int_{\big||x_{j,n}-x|-|t_{j,n}|\big|\geq R\lambda_{j,n}} \left|\nabla_{t,x}U^j_{\lin,n}(0,x)\right|^2,dx <C\eps. 
\end{equation} 
Hence for large $n$, 
\begin{equation}
 \label{E59}
 \int \nabla_{t,x}u(t_n)\cdot\nabla_{t,x}U^{j}_{\lin,n}(0,x)\,dx\leq \int_{\big||x_{j,n}-x|-|t_{j,n}|\big|\leq R\lambda_{j,n}} \nabla_{t,x}u(t_n)\cdot\nabla_{t,x}U^{j}_{\lin,n}(0,x)\,dx+C\sqrt{\eps}
\end{equation} 
(where the constant $C>0$ might depend on $U^j$, $\sup_{t\geq 0} \|\nabla_{t,x}u\|_{\hdot\times L^2}$ but is of course independent on $n$). By the triangle inequality, \eqref{E49} and \eqref{E56}, there exists $\eps'>0$ such that for large $n$,
\begin{equation}
 \label{E60}
 |x_{j,n}-x|-|t_{j,n}|\leq R\lambda_{j,n}\Longrightarrow |x|\geq |x_{j,n}|-|t_{j,n}|-R\lambda_{j,n}\Longrightarrow |x|\geq t_n+\overline{A}+\eps'.
\end{equation} 
Using \eqref{E15} in Proposition \ref{P:E5} and \eqref{E59}, we obtain that for large $n$,
\begin{multline*}
 \int\nabla_{t,x}u(t_n,x)\cdot\nabla_{t,x}U^j_{\lin,n}(0,x)\,dx\leq \int_{\left| |x_{j,n}-x|-|t_{j,n}|\right|\leq R\lambda_{j,n}} \nabla_{t,x}v_{\lin}(t_n,x)\cdot\nabla_{t,x}U^j_{\lin,n}(0,x)\,dx+C\sqrt{\eps}\\
 \leq \int \nabla_{t,x}v_{\lin}(t_n,x)\cdot\nabla_{t,x}U^j_{\lin,n}(0,x)\,dx+C\sqrt{\eps}.
\end{multline*}
Since 
$$\lim_{n\to\infty}\int \nabla_{t,x} v_{\lin}(t_n,x)\cdot\nabla_{t,x}U^j_{\lin,n}(0,x)\,dx=0,$$ 
we deduce
(using that $\eps>0$ is arbitrarily small)
\begin{equation}
 \label{E61}
\limsup_{n\to\infty}\int \nabla_{t,x} u(t_n,x)\cdot\nabla_{t,x}U^j_{\lin,n}(0,x)\,dx\leq 0, 
\end{equation} 
which contradicts \eqref{E50}.

\EMPH{Subcase 2b} We assume, in addition to \eqref{E53},
\begin{equation}
 \label{E62}
 \lim_{n\to\infty} |x_{j,n}|-t_n-|t_{j,n}|=\overline{A},
\end{equation}
i.e. that \eqref{E48} holds with an equality. In view of \eqref{E47}, we must have
\begin{equation}
 \label{E63}
 \lim_{n\to\infty}|t_{j,n}|>0.
\end{equation} 
Extracting subsequences, we can assume
\begin{equation}
 \label{E64}
 \lim_{n\to\infty} \frac{x_{j,n}}{|x_{j,n}|}=\omega_0\in S^{N-1}.
\end{equation} 
Let $\eps>0$. Choose $R\gg 1$ such that \eqref{E57} holds, and $\alpha>0$ such that
\begin{equation}
 \label{E65}
 \int_{\RR}\int_{\substack{\omega\in S^{N-1}\\ |\omega+\omega_0|\leq \alpha}} \left|G^j(\eta,\omega)\right|^2\,d\omega\,d\eta<\eps.
\end{equation} 
In view of \eqref{E57} and \eqref{E65}, we have, for large $n$,
\begin{equation}
 \label{E66}
 \int_{\big||x-x_{j,n}|-|t_{j,n}|\big|\geq R\lambda_{j,n}} \left|\nabla_{t,x}U^j_{\lin,n}(0,x)\right|^2\,dx + 
\int_{\left| \frac{x-x_{j,n}}{|x-x_{j,n}|}+\omega_0\right|\leq \alpha} \left|\nabla_{t,x}U^j_{\lin,n}(0,x)\right|^2\,dx\leq C\eps,
 \end{equation} 
 and thus, for large $n$ again,
 $$\int\nabla_{t,x}u(t_n,x)\cdot\nabla_{t,x}U^{j}_{\lin,n}(0,x)\,dx\leq \int_{Z_n}\nabla_{t,x}u(t_n,x)\cdot\nabla_{t,x}U^j_{\lin,n}(0,x)\,dx+C\sqrt{\eps}. $$ 
 where 
 $$Z_n:=\Big\{ x\in \RR^N\;:\; \Big||x-x_{j,n}|-|t_{j,n}|\Big|\leq R\lambda_{j,n}\text{ and } \left|\frac{x-x_{j,n}}{|x-x_{j,n}|}+\omega_0\right|\geq \alpha\Big\}.$$
 We next prove that there exists $\eps'>0$ such that for large $n$,
 \begin{equation}
  \label{E67}
  x\in Z_n\Longrightarrow |x|\geq \overline{A}+\eps+t_n.
 \end{equation} 
 Assuming \eqref{E67}, we can prove \eqref{E61} exactly as in subcase 2a, obtaining again a contradiction.
 
 We have $x=x_{j,n}+x-x_{j,n}$. 
 \begin{align*}
  x\cdot \frac{x_{j,n}}{|x_{j,n}|}&=|x_{j,n}|+(x-x_{j,n}) \cdot\frac{x_{j,n}}{|x_{j,n}|}\\
  &=|x_{j,n}|+|x-x_{j,n}| \frac{(x-x_{j,n})\cdot x_{j,n}}{|x-x_{j,n}||x_{j,n}|}\\
  &=|x_{j,n}|+|x-x_{j,n}| \left(\frac{x-x_{j,n}}{|x-x_{j,n}|}\omega_0 +o_n(1)\right).
 \end{align*}
Taking the square of the inequality
$\left|\frac{x-x_{j,n}}{|x-x_{j,n}|}+\omega_0\right|\geq \alpha$ and expanding, we obtain
$\omega_0\cdot \frac{x-x_{j,n}}{|x-x_{j,n}|}\geq \frac{\alpha^2}{2}-1$ for $x\in Z_n$. Hence, for large $n$, if $x\in Z_n$,
\begin{multline*}
 |x|\geq x\cdot \frac{x_{j,n}}{|x_{j,n}|}\geq |x_{j,n}|+\left(\frac{\alpha^2}{4}-1\right)\left|x-x_{j,n}\right|\\
 \geq \overline{A}+t_n+|t_{j,n}|+\left(\frac{\alpha^2}{4}-1\right)\big(|t_{j,n}|+R\lambda_{j,n}+o_n(1)\big) 
 \geq t_n+\overline{A}+\frac{\alpha^2}{8}|t_{j,n}|
\end{multline*}
for large $n$, using \eqref{E49} and \eqref{E63}. In view of \eqref{E63}, the desired conclusion \eqref{E67} follows. This concludes the proof of Lemma \ref{L:E10}.
\end{proof}
\begin{proof}[Proof of Lemma \ref{L:E11}]
Assume without loss of generality that $\omega=e_1$.

 Let (after extraction) \profiles be a profile decomposition of $\{\vec{u}(t_n)\}_n$. Let $\delta_3>0$ to be specified later and $J_1$ be such that for $1\leq j\leq J_1$, $\|U^j_{\lin}\|_{S(\RR)}\geq \delta_3$ and for $j>J_1$, $\|U^{j}_{\lin}\|_{S(\RR)}<\delta_3$. Since 
 $$\sum_{j\geq 1} \left\|\vec{U}^j_{\lin}(0)\right\|^2_{\hdot\times L^2}<\infty,$$
 it is easy to see, using Strichartz estimates, that such a (finite) $J_1$ exists. Note that if $\delta_3$ is small enough, $J_1\geq 1$, because otherwise $u$ scatters, which contradicts our assumption that $\overline{A}$ is finite. 

 Note that $\left(U_{\lin}^j,\{t_{j,n},x_{j,n},\lambda_{j,n}\}_{n}\right)_{j\geq 1+J_1}$ is a profile decomposition for the sequence of remainders $\left\{w_n^{J_1}\right\}_{n}$.
 By Claim \ref{Cl:A1} in the appendix, there exists $\theta>0$ such that 
 \begin{equation}
  \label{E71'}
  \limsup_{n\to\infty} \left\|w_n^{J_1}\right\|_{S(\RR)} \leq C\delta_3^{\theta} ,
 \end{equation} 
 where $C$ depends only on the bound
 $$M:=\limsup_{t\to\infty} \left\|\vec{u}(t)\right\|_{\hdot\times L^2}.$$
 We choose $\delta_3$ such that $C\delta_3^{\theta}<\delta_2/2$, where $\delta_2$ is given by Proposition \ref{P:E8}. Extracting subsequences, we can assume that the following limits exist for all $j\in \{1,\ldots,J_1\}$:
 \begin{gather}
 \label{E73}
 \lim_{n\to\infty} \lambda_{j,n}\in [0,+\infty]\\
 \label{E74}
 \lim_{n\to\infty} t_{j,n}\in \RR\cup \{\pm\infty\}\\
 \label{E74'}
  \lim_{n\to\infty} \left|x_{j,n}-(\overline{A}+t_n)e_1\right|\in [0,+\infty].
 \end{gather}
We argue by contradiction, assuming that for all $j\in \{1,\ldots,J_1\}$, one of the limits \eqref{E73}, \eqref{E74} or \eqref{E74'} is not $0$. 

By Proposition \ref{P:E8} and Claim \ref{Cl:A1} in the appendix, it is sufficient to prove that for all $j\in \{1,\ldots,J_1\}$ there exists $\eps>0$ such that 
\begin{equation}
 \label{E75}
 \limsup_{n\to\infty} \left|I_{n,\eps}^j\right|<\frac{\delta_2}{4}J_1,
\end{equation} 
where
$$I_{n,\eps}^j:=\left( \int_{t_n}^{+\infty} \left( \int_{\substack{\overline{A}-\eps+|t|\leq |x|\leq  \overline{A}+\eps+|t|\\ \left|\widehat{(e_1,x)}\right|\leq 4/\sqrt{t_n}}} \left|U^j_{\lin,n}(t-t_n,x)\right|^{\frac{2(N+2)}{N-2}}\,dx \right)^{1/2}\,dt \right)^{\frac{N-2}{N+2}}.$$
This would yield, by the triangle inequality, \eqref{E71'} and the bound $C\delta_3^{\theta}<\delta_2/2$,
$$\limsup_{n\to\infty} \left\| S_L(\cdot-t_n)\vec{u}(t_n)\right\|_{S\left(\left\{\overline{A}-\eps+|t|\leq |x|\leq \overline{A}+\eps+|t|,\;\left|\widehat{(e_1,x)}\right|\leq 4/\sqrt{t_n}\right\}\right)}<\delta_2,$$
and thus, by Proposition \ref{P:E8}, that $e_1$ is a regular point, a contradiction.
 
 By the change of variables 
 $$ s=\frac{t-t_n-t_{j,n}}{\lambda_{j,n}},\quad y=\frac{x-x_{j,n}}{\lambda_{j,n}},$$
 we obtain
 \begin{equation}
  \label{E76}
  I_{n,\eps}=\int_{-t_{j,n}/\lambda_{j,n}}^{+\infty} \left( \int_{Q_{n,\eps}(s)} \left|U^j_{\lin}(s,y)\right|^{\frac{2(N+2)}{N-2}}\,dy \right)^{1/2}\,ds
 \end{equation} 
 where $Q_{n,\eps}(s)$ is the set of $y\in \RR^N$ such that the absolute value of the angle between $\lambda_{j,n}y+x_{j,n}$ and $e_1$ is $\leq 4/\sqrt{t_n}$ and $$\overline{A}-\eps+\left|\lambda_{j,n}s+t_n+t_{j,n}\right|<|x_{j,n}+\lambda_{j,n}y|<\overline{A}+\eps+|\lambda_{j,n}s+t_n+t_{j,n}|.$$
 
 If $\lim_{n\to\infty} -t_{j,n}/\lambda_{j,n}=+\infty$, then we see that $\lim_{n\to\infty} I_{n,\eps}=0$ and \eqref{E75} follows. We are thus reduced to the case 
 \begin{equation}
  \label{E77}
  \lim_{n\to\infty} \frac{-t_{j,n}}{\lambda_{j,n}}=-\infty\quad\text{or}\quad \forall n,\; t_{j,n}=0.
 \end{equation} 
 We note that if for almost every $s\in \RR$,
 \begin{equation}
  \label{E78}
  \indic_{Q_{n,\eps}(s)}(y) \underset{n\to\infty}{\longrightarrow}0 \text{ for a.a. }y,
 \end{equation} 
 then $\lim_n I_{n,\eps}=0$ by dominated convergence and we are done. 
 
\EMPH{Case 1} If $\lim_{n\to\infty} \lambda_{j,n}=+\infty$, then $Q_{n,\eps}(s)$ is for all $s$ included in an annulus of length $\leq C/\lambda_{j,n}$ which proves that \eqref{E78} holds.

\EMPH{Case 2} If $\lim_{n\to\infty}\lambda_{j,n}=0$, we distinguish 3 subcases according to the limit
\begin{equation}
 \label{F77}
 \ell=\lim_{n\to\infty}|x_{j,n}|-(\overline{A}+t_n).
\end{equation} 
\begin{itemize}
 \item 
If $\ell>0$, then by Lemma \ref{L:E10} we must have 
\begin{equation}
 \label{F77'}
\lim_{n\to\infty}|x_{j,n}|-t_n-t_{j,n}<\overline{A}
\end{equation} 
and we see, fixing $\eps$ small enough, that for any $s$, $Q_{n,\eps}(s)$ is empty for large $n$, yielding \eqref{E78}.
\item If $\ell<0$, then since $t_{j,n}\geq 0$ for large $n$, we obtain again \eqref{F77'} and thus \eqref{E78}.
\item 
In the case $\ell=0$ and $\lim_{n}|t_{j,n}|>0$, we obtain again \eqref{F77'} and \eqref{E78}. Finally, we assume $\ell=0$ and 
$$\lim_{n\to\infty}|t_{j,n}|=0.$$
Letting, after extraction $\omega_{\infty}=\lim_{n}\frac{x_{j,n}}{|x_{j,n}|}$, we see, fixing $y$, and using that $|x_{j,n}|$ goes to infinity, that
\begin{equation}
 \label{F78}
 \lim_{n\to\infty} \left|\widehat{\Big(\lambda_{j,n}y+x_{j,n},e_1\Big)}\right|=\left|\widehat{(\omega_{\infty},e_1)}\right|.
\end{equation} 
Using that $\lim_n t_{j,n}=\lim_n \lambda_{j,n}=0$, we must have 
$$\lim_{n\to\infty}\left|x_{j,n}-(\overline{A}+t_n)e_1\right|>0$$
and thus (using that $\ell=0$), $\omega_{\infty}\neq e_1$. By \eqref{F78}, we see that \eqref{E78} holds again.
\end{itemize}
\EMPH{Case 3}
\begin{equation}
 \label{E79}
 \lim_{n\to\infty}\lambda_{j,n}=\lambda_{\infty}\in (0,+\infty).
\end{equation} 
In this case we cannot prove \eqref{E78} for a fixed $\eps$. We prove \eqref{E75} by contradiction, assuming that for all $\eps>0$,
\begin{equation}
 \label{E79'}
 \limsup_{n\to\infty} |I_{n,\eps}|\geq \frac{\delta_2}{4J_1}.
\end{equation} 
Then we can find a sequence of positive numbers $\{\eps_k\}_k\to 0$, a sequence of integers $\{n_k\}_k\to+\infty$ such that 
\begin{equation}
 \label{E80} 
 \forall k,\quad I_{n_k,\eps_k}\geq \frac{\delta_2}{8J_1}.
\end{equation} 
As a consequence, we see that $Q_{n_k,\eps_k}(s)$ is included in an annulus of length $\lesssim \eps_k$ (using \eqref{E79}) and thus $\indic_{Q_{n_k,\eps_k}(s)}(y)$ goes to $0$ for almost every $y$, which contradicts \eqref{E79'} and concludes the proof.
 \end{proof}
\section{Concentration in a direction and virial type identity}
\label{S:virial}
In this part we prove the following:
\begin{prop}
\label{P:E12}
 Let $u$ be as in Theorem \ref{T:E1}, and assume that the conclusion of Theorem \ref{T:E1} does not hold. Let $\overline{A}$ be given by Proposition \ref{P:E5}. Then
there exists a sequence $\{t_n'\}_n\to+\infty$ such that, for some $\alpha>0$,
\begin{equation}
 \label{E81}
 \limsup_{n\to\infty} \int_{|x-(t_n'+\overline{A}e_1)|<\alpha} \left|\partial_{x_1}u(t_n')+\partial_tu(t_n')\right|^2+\sum_{j=2}^J |\partial_{x_j}u(t_n')|^2\,dx<\eps.
\end{equation} 
\end{prop}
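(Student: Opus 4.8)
We prove Proposition \ref{P:E12} by a virial (Morawetz-type) argument based on the null multiplier $\partial_t+\partial_{x_1}$, localised near the part of the critical cone $\{|x|=t+\overline{A}\}$ lying in a small angular neighbourhood of $e_1$. We may assume $e_1\in\SSS$, the case $e_1\in\RRR$ being immediate: there, by Definition \ref{D:E6}, point \eqref{scatter} of the Cauchy theory and Claim \ref{Cl:E4}, $u$ coincides on a conical neighbourhood of $e_1$, near and outside the cone, with a solution of \eqref{E3} up to an error going to $0$ in $\hdot\times L^2$, and the non-outgoing part of such a solution restricted to $B((t+\overline{A})e_1,\alpha)$ goes to $0$ by \eqref{B4}, \eqref{B1} and \eqref{B1bis} of Theorem \ref{T:B2}. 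Assume, for contradiction, that there is $\eps_0>0$ such that for every $\alpha>0$
$$\liminf_{t\to+\infty}\int_{|x-(t+\overline{A})e_1|<\alpha}\Bigl(|\partial_{x_1}u+\partial_tu|^2+\sum_{j=2}^{N}|\partial_{x_j}u|^2\Bigr)(t,x)\,dx\ \geq\ \eps_0 .$$
Since $\SSS$ is finite by Proposition \ref{P:E7}, fix $\theta>0$ so small that $e_1$ is the only point of $\SSS$ with $|\widehat{(\omega,e_1)}|\le\theta$, and then fix $\alpha\in(0,\theta)$.

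Introduce $\chi(t,x)=\psi(x_1-t)\,\eta(x)$, where $\psi$ is smooth, nondecreasing, equals $0$ for $x_1-t\le\overline{A}-\tfrac\alpha2$, equals $1$ for $x_1-t\ge\overline{A}+\tfrac\alpha2$, with $\psi'=\tfrac1\alpha$ on $[\overline{A}-\tfrac\alpha2,\overline{A}+\tfrac\alpha2]$, and $\eta=\eta\bigl(\widehat{(x,e_1)}\bigr)$ is smooth, equal to $1$ for $|\widehat{(x,e_1)}|\le\tfrac\theta2$ and supported in $|\widehat{(x,e_1)}|\le\theta$. Set
$$\mathcal{I}(t):=\int\chi(t,x)\,\Bigl(\tfrac12|\partial_tu|^2+\tfrac12|\nabla u|^2-\tfrac{N-2}{2N}|u|^{\frac{2N}{N-2}}+\partial_tu\,\partial_{x_1}u\Bigr)(t,x)\,dx ,$$
the $\chi$-localised density $\rho$ of energy plus $x_1$-momentum. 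By \eqref{E2} and the Sobolev embedding $\hdot\hookrightarrow L^{\frac{2N}{N-2}}$, $|\mathcal{I}(t)|\le C_0$ uniformly in $t$. Using \eqref{NLW} and the fact that $\rho$ obeys the combined energy–momentum conservation law $\partial_t\rho=\Div\vec W$, with $|\vec W|\lesssim|\nabla_{t,x}u|^2+|u|^{\frac{2N}{N-2}}$, one computes — the key algebraic point being that $\rho+W_1=(\partial_tu+\partial_{x_1}u)^2$ \emph{identically}, where $W_1$ is the $e_1$-component of $\vec W$ —
$$\frac{d}{dt}\mathcal{I}(t)=-\frac1\alpha\int_{\overline{A}-\frac\alpha2\le x_1-t\le\overline{A}+\frac\alpha2}\eta\,\bigl(\partial_tu+\partial_{x_1}u\bigr)^2\,dx\ -\ \int\psi\,(\nabla\eta)\cdot\vec W\,dx .$$

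The last integral is an error term: $|\nabla\eta|\lesssim(\theta|x|)^{-1}\lesssim(\theta t)^{-1}$ on its support (where $x_1-t\ge\overline{A}-\tfrac\alpha2$, so $|x|\gtrsim t$ for $t$ large), while $\int_{\RR^N}|\vec W|\,dx\lesssim M^2+M^{\frac{2N}{N-2}}$ by \eqref{E2} and Sobolev, with $M=\sup_t\|\vec{u}(t)\|_{\hdot\times L^2}$; hence this term is $\lesssim (\theta t)^{-1}\bigl(M^2+M^{\frac{2N}{N-2}}\bigr)$. Integrating from $T_0$ to $T$ and using $\mathcal{I}(T)\ge-C_0$,
$$\int_{T_0}^{T}\!\int_{\overline{A}-\frac\alpha2\le x_1-t\le\overline{A}+\frac\alpha2}\eta\,(\partial_tu+\partial_{x_1}u)^2\,dx\,dt\ \le\ \alpha\bigl(2C_0+C_\theta\log T\bigr),$$
so the Cesàro average over $[T_0,T]$ of the inner integral tends to $0$, hence the inner integral tends to $0$ along a suitable sequence $t_n'\to+\infty$. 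For $t$ large the set $\{\,\overline{A}-\tfrac\alpha2\le x_1-t\le\overline{A}+\tfrac\alpha2,\ \eta=1\,\}$ contains $B\bigl((t+\overline{A})e_1,\tfrac\alpha4\bigr)$ (that ball subtends angle $\lesssim\alpha/t\to0$), whence
$$\lim_{n\to\infty}\int_{|x-(t_n'+\overline{A})e_1|<\frac\alpha4}|\partial_{x_1}u+\partial_tu|^2(t_n',x)\,dx\ =\ 0 .$$

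It remains to establish, along a further subsequence, that $\int_{|x-(t_n'+\overline{A})e_1|<\alpha'}\sum_{j=2}^{N}|\partial_{x_j}u|^2\,dx\to0$ for some $\alpha'>0$; together with the previous limit this contradicts our standing assumption and proves the proposition. For the transverse derivatives one runs a parallel argument based on a Morawetz-type multiplier in the variables $x_2,\dots,x_N$ about the axis $\{x_2=\dots=x_N=0\}$, truncated to $|x_\perp|\lesssim\alpha$ (with $x_\perp=(x_2,\dots,x_N)$) and to the same shell and angular sector — which is admissible precisely because $\SSS$ is finite, so the sector contains no singular direction other than $e_1$. Near the cone, where the distance to the origin is $\sim t$, this controls a spacetime integral of $|x_\perp|^{-1}$ times the squared angular part of $\nabla_\perp u$, and the remaining radial component $\tfrac{x_\perp}{|x_\perp|}\cdot\nabla_\perp u$ is then recovered from the equation written as $-4\partial_s\partial_q u=\Delta_\perp u+|u|^{\frac4{N-2}}u$ (with $s=x_1-t$, $q=x_1+t$, $\Delta_\perp=\sum_{j\ge2}\partial_{x_j}^2$), using the smallness of $\partial_tu+\partial_{x_1}u$ just obtained and the exterior information \eqref{E14'}, \eqref{E15} of Proposition \ref{P:E5} at the outer edge of the shell. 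The step I expect to be the main obstacle is exactly this transverse analysis: unlike the $\partial_t+\partial_{x_1}$ identity above, whose divergence structure absorbed the nonlinearity, here one must control the focusing critical term (and $|x|^{-1}|u|$-type terms) in the region \emph{just inside} the cone, where, in contrast with Proposition \ref{P:E5}, no a priori smallness is available; this is handled by exploiting the gain $|x|^{-1}\sim t^{-1}\to0$ on the shell, the localisation at scale $\alpha$, and again the finiteness of $\SSS$.
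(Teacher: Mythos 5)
Your null-multiplier computation for the first term is correct: the pointwise identity $\rho + W_1 = (\partial_t u + \partial_{x_1}u)^2$ does hold, and integrating the divergence law against $\psi(x_1-t)\,\eta(\widehat{(x,e_1)})$, then Ces\`aro-averaging in time, produces a sequence along which $\int_{B((t_n'+\overline{A})e_1,\alpha/4)}(\partial_t u+\partial_{x_1}u)^2\to 0$. That part is a clean alternative to the corresponding piece of the paper's argument. The genuine gap is exactly where you flag it: the transverse term $\sum_{j\geq 2}|\partial_{x_j}u|^2$. The ``parallel'' transverse Morawetz is not carried out, and as you yourself note it is not clear it can be made to close — a transverse-radial multiplier $\frac{x_\perp}{|x_\perp|}\cdot\nabla_\perp u+\frac{c}{|x_\perp|}u$ brings the focusing term $|u|^{\frac{2N}{N-2}}$ in with the wrong sign and with no $|x_\perp|^{-1}$ weight, and there is no a priori smallness of this term just inside the cone; recovering the $x_\perp$-radial piece from $-4\partial_s\partial_q u=\Delta_\perp u+|u|^{\frac{4}{N-2}}u$ is only a heuristic. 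As written, the proposal does not prove the proposition.

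The paper takes a different route that controls both pieces at once and eliminates the nonlinearity algebraically. The crucial input you do not have is Lemma \ref{L:E13}: the energy density recentred at $(t+\overline{A})e_1$ converges weakly (along subsequences) to a Radon measure supported in $\{x_1\leq 0\}$. Lemma \ref{L:E14} uses this plus finite speed of propagation to produce, for any $\eps$, a radius $\alpha$ and two nearby times $t_n$, $t_n-\alpha/10$ so that both limit measures give mass $<\eps$ on an annulus around scale $\alpha$. Then, with the cutoff $\varphi_\alpha$, one writes five localized virial-type identities (derivatives of $\int u\partial_tu\varphi_\alpha$, $\int x\cdot\nabla u\,\partial_tu\varphi_\alpha$, $\int x_1 e(u)\varphi_\alpha$, $\int\partial_{x_1}u\,\partial_tu\varphi_\alpha$, $\int e(u)\varphi_\alpha$), whose boundary terms are all $O(\eps)$ precisely because of the support lemma. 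Time-averaging on $[t_n-\alpha/10,t_n]$ yields a linear system for $\overline{a}_n=\overline{\smash{\int(\partial_tu)^2\varphi_\alpha}}$, $\overline{b}_n$, $\overline{c}_n$, $\overline{d}_n$; solving it gives $\overline{c}_n=O(\eps)$ (so the nonlinear term drops out by linear algebra, not by a sign) and $\tfrac12\overline{a}_n+\tfrac12\overline{b}_n+\overline{d}_n=O(\eps)$, which is exactly the desired $\int\big[(\partial_tu+\partial_{x_1}u)^2+|\nabla_{x'}u|^2\big]\varphi_\alpha=O(\eps)$. Two further differences worth noting: the paper's argument holds for an arbitrary direction and never uses $e_1\in\SSS$ nor the finiteness of $\SSS$ (those enter only later, in Section \ref{S:end_of_proof}), whereas your angular cutoff leans on $\SSS$ being finite; and the paper produces a fixed $\alpha=\alpha(\eps)$ for which both terms are small simultaneously, which is what the statement requires, while your proposal would have to control the two terms on possibly different scales and different subsequences.
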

\begin{remark}
We will use Proposition \ref{P:E12} to prove that $e_1\notin \SSS$.
 Of course one could write an analogue of Proposition \ref{P:E12} adapted to another direction than $e_1$. However, since we will use the spherical symmetry of equation \eqref{NLW} to reduce to the direction $e_1$, Proposition \ref{P:E12} will be sufficient for our purpose.
\end{remark}
We start with a few lemmas.
\begin{lemma}
 \label{L:E13}
 Let $\{\tau_n\}_n\to+\infty$ be any sequence. Denote by
 \begin{equation}
  \label{Edefrho}
\rho(t,x)=
|u(t,(t+\overline{A})e_1+x)|^{\frac{2N}{N-2}}+|\nabla_{t,x}u(t,(t+\overline{A})e_1+x)|^2+\frac{1}{|x|^2}|u(t,(t+\overline{A})e_1+x)|^2.
 \end{equation} 
Then, after extraction of a subsequence, there exists a non-negative Radon measure $\tilde{\mu}$ on $\RR^N$ such that 
 \begin{gather}
  \label{E82}
 \rho(\tau_n,\cdot)\xrightharpoonup[n\to\infty]{} \tilde{\mu}\\
 \label{E83}
 \supp \tmu \subset\{x_1\leq 0\}
 \end{gather} 
 \end{lemma}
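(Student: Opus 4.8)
The plan is to establish Lemma \ref{L:E13} in two moves: first a compactness argument to extract the measure $\tmu$, then a localization argument using finite speed of propagation and the already-proven facts about $\overline{A}$ to pin down the support.

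\textbf{Step 1: extracting $\tmu$.} For each fixed $n$, the function $\rho(\tau_n,\cdot)$ is a nonnegative $L^1_{\loc}(\RR^N)$ function (indeed $\rho(\tau_n,\cdot)\geq 0$ pointwise, and its integral over any fixed ball is controlled, after the translation $x\mapsto (\tau_n+\overline{A})e_1+x$, by $\|u(\tau_n)\|_{L^{2N/(N-2)}}^{2N/(N-2)}+\|\nabla_{t,x}u(\tau_n)\|_{L^2}^2+\||x|^{-1}u(\tau_n)\|_{L^2}^2$, hence by a constant depending only on $M=\sup_{t\geq 0}\|\vec u(t)\|_{\hdot\times L^2}$ via Hardy's inequality). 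Thus $\{\rho(\tau_n,\cdot)\,dx\}_n$ is a sequence of nonnegative Radon measures with uniformly bounded mass on every compact set; by the weak-$*$ sequential compactness of bounded sets of Radon measures (together with a diagonal extraction over an exhaustion of $\RR^N$ by balls), a subsequence converges weakly-$*$ to a nonnegative Radon measure $\tmu$ on $\RR^N$. This gives \eqref{E82}.

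\textbf{Step 2: the support is in $\{x_1\leq 0\}$.} It suffices to show that for every point $x^0$ with $x^0_1>0$ there is a ball around $x^0$ on which $\tmu$ vanishes, equivalently (by the weak-$*$ convergence and nonnegativity) that $\int_B \rho(\tau_n,\cdot)\,dx\to 0$ for some ball $B$ around $x^0$. Fix $\delta>0$ with $\delta<x^0_1$. If $x$ lies in the ball $B(x^0,\delta)$, then the translated point $y=(\tau_n+\overline{A})e_1+x$ satisfies $y_1\geq \tau_n+\overline{A}+x^0_1-\delta$, so $|y|\geq y_1\geq \tau_n+\overline{A}+(x^0_1-\delta)=\tau_n+A$ with $A:=\overline{A}+(x^0_1-\delta)>\overline{A}$. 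Hence $B(x^0,\delta)$ is mapped into $\{|y|\geq \tau_n+A\}$. Now by \eqref{E14'} and \eqref{E15} of Proposition \ref{P:E5}, applied with this $A>\overline{A}$: the contributions $\||x|^{-1}u(\tau_n)\|^2_{L^2(\{|y|>\tau_n+A\})}$ and $\|u(\tau_n)\|^{2N/(N-2)}_{L^{2N/(N-2)}(\{|y|>\tau_n+A\})}$ tend to $0$, and $\|\nabla_{t,x}(u-v_{\lin})(\tau_n)\|^2_{L^2(\{|y|>\tau_n+A\})}\to 0$. It remains to control the $\nabla_{t,x}v_{\lin}$ piece on $\{|y|>\tau_n+A\}$; but by \eqref{B1}, \eqref{B1bis} and \eqref{B4} of Theorem \ref{T:B2}, the radiation field of $v_{\lin}$ is in $L^2(\RR\times S^{N-1})$, so $\int_{|y|\geq \tau_n+A}|\nabla_{t,x}v_{\lin}(\tau_n,y)|^2\,dy$ is, up to $o_n(1)$, equal to $2\int_{\eta\geq A}\int_{S^{N-1}}|G_+(\eta,\omega)|^2\,d\omega\,d\eta$ plus the angular and zeroth-order terms which vanish — and the latter integral is finite but not necessarily small. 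This is the one point where a fixed $A>\overline{A}$ does not suffice, and the remedy is to note that we are free to shrink $B$: taking instead the point $x^0$ fixed but any radius $\delta\to 0$, the relevant value of $A$ can be taken arbitrarily close to $\overline{A}+x^0_1$; what actually saves us is simpler, namely that $\int_B |\nabla_{t,x}v_{\lin}(\tau_n,\cdot)|^2$, over a fixed ball $B$ moving off to spatial infinity along $(\tau_n+\overline{A})e_1$, tends to $0$ because $v_{\lin}$'s energy concentrates in the thin annulus $|y|\approx \tau_n$ of width $O(1)$ — more precisely $r^{\frac{N-1}{2}}\partial_{t,r}v_{\lin}$ converges in $L^2(dr\,d\omega)$ to $\mp G_+(r-t,\omega)$, so the energy in $\{|y|\geq \tau_n+A\}$ for $A$ large is small, and splitting $B\subset\{|y|\geq \tau_n+A_0\}$ for a large but fixed $A_0$ (allowed once $\delta$, hence $x^0_1-\delta$, is replaced by picking $x^0$ far out — but $x^0$ is given) forces us back to: choose first a large $A_0$ so that the tail of $\|G_+\|_{L^2}$ beyond $A_0$ is $<\eps$, then observe that for the \emph{given} $x^0$ we cannot in general arrange $A>A_0$. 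Therefore the clean argument is: it is enough to prove $\tmu(\{x_1\geq c\})=0$ for every $c>0$ by a slightly different route, sweeping $c$ to $0$.

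\textbf{The main obstacle.} As the previous paragraph exposes, the delicate point is precisely controlling the linear energy $\int_{|y|\geq \tau_n+A}|\nabla_{t,x}v_{\lin}(\tau_n,y)|^2\,dy$ for a fixed $A>\overline{A}$: it need not be small. The correct fix (which I expect the authors to use) is to combine \eqref{E15} with the observation that $u=v$ outside the cone, where $v$ solves \eqref{E21} with a source in $L^1_tL^2_x$; then $\vec v(\tau_n)-\vec v^A_{\lin}(\tau_n)\to 0$ in $\hdot\times L^2$, and $v^A_{\lin}$'s radiation field $G^A$ vanishes for $\eta>A$ (this is exactly \eqref{E26}). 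Consequently, for $|y|\geq \tau_n+A'$ with $A'>A>\overline{A}$, the energy of $v^A_{\lin}(\tau_n)$ in that region tends to $0$ (its radiation field is supported in $\eta\leq A$, while the region corresponds to $\eta=r-\tau_n\geq A'>A$). Hence, for the given $x^0$ with $x^0_1>0$, pick $\delta$ with $0<\delta<x^0_1/2$, set $A=\overline{A}+x^0_1/2$ and $A'=\overline{A}+x^0_1-\delta>A$; on $B(x^0,\delta)$ the translated variable satisfies $|y|\geq\tau_n+A'$, and therefore $\int_{B(x^0,\delta)}\rho(\tau_n,\cdot)\,dx\to 0$, using \eqref{E14'} for the potential and zeroth-order terms and $\vec u(\tau_n)=\vec v(\tau_n)$ there together with $\vec v(\tau_n)\to \vec v^A_{\lin}(\tau_n)$ and the above energy statement for the gradient term. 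Since $\tmu\geq 0$, this forces $\tmu(B(x^0,\delta))=0$ for every such $x^0$, i.e. $\supp\tmu\subset\{x_1\leq 0\}$, proving \eqref{E83}.
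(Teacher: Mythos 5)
Your Step~1 is correct and coincides with the paper's extraction argument. Step~2, however, has a fatal gap, precisely at the point you yourself flagged as ``the main obstacle.''

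The paper's mechanism for killing $\int_B |\nabla_{t,x} v_{\lin}(\tau_n,\cdot)|^2$ over a fixed ball $B$ recentered at $(\tau_n+\overline{A})e_1$ is \emph{angular} concentration, not a radial tail estimate. Using the radiation field (Theorem~\ref{T:B2}) one writes, for $\varphi\in C_0^\infty(\RR^N)$,
\begin{equation*}
 \int |\nabla_{t,x}v_{\lin}(\tau_n,x+(\tau_n+\overline{A})e_1)|^2\varphi(x)\,dx
 = 2\int_{-\tau_n}^{+\infty}\int_{S^{N-1}}|G(\eta,\omega)|^2\,\varphi\bigl((\tau_n+\eta)\omega-(\tau_n+\overline{A})e_1\bigr)\,d\omega\,d\eta+o_n(1),
\end{equation*}
and observes that for each fixed $(\eta,\omega)$ with $\omega\neq e_1$ the argument of $\varphi$ is $\tau_n(\omega-e_1)+\eta\omega-\overline{A}e_1$, whose norm tends to $+\infty$; so the integrand tends to $0$ pointwise a.e.\ (since $\{e_1\}$ is $\omega$-null), and dominated convergence (dominated by $|G|^2\,\|\varphi\|_\infty \in L^1$) finishes. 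The recentered ball does sit in the thin annulus around $\eta\approx\overline{A}+x_1^0$, but it only sees an $O(1/\tau_n)$ solid angle near $\omega=e_1$; no smallness of $G$ in the $\eta$-variable is needed. You sensed this geometry (``$v_{\lin}$'s energy concentrates in the thin annulus...'') but then slid into a radial-tail argument and, correctly observing that it fails for the given $x^0$, abandoned the right idea.

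Your proposed ``fix'' does not close the gap. You claim that ``$v^A_{\lin}$'s radiation field $G^A$ vanishes for $\eta>A$ (this is exactly \eqref{E26}),'' and use this to conclude that the energy of $v^A_{\lin}(\tau_n)$ in $\{|y|\geq\tau_n+A'\}$ with $A'>A$ tends to $0$. But \eqref{E26} says $G(\eta,\omega)=G^A(\eta,\omega)$ for $\eta>A$ — it identifies the two radiation fields beyond $A$, it does not assert either vanishes there — and in general $G$ (hence $G^A$) need not vanish for $\eta>A$. So the tail energy of $v^A_{\lin}$ in $\{|y|\geq \tau_n+A'\}$ does not tend to $0$ for a fixed $A'$, and the final paragraph of your proof does not go through. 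Replacing $v_{\lin}$ by $v^A_{\lin}$ does not help: the thing that makes the linear energy over the fixed ball $B$ vanish is the shrinking angular sector, not any support property of the radiation field.

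Concretely, the proof you need for Step~2 is: for each $\eps>0$ and each $\varphi\in C_0^\infty(\RR^N)$ with $\supp\varphi\subset\{x_1\geq\eps\}$, show $\int\rho(\tau_n,\cdot)\varphi\to 0$. For the $|u|^{\frac{2N}{N-2}}$ and $|x|^{-2}|u|^2$ pieces this is \eqref{E14'} (with $A=\overline{A}+\eps$); for the gradient piece write $\nabla_{t,x}u=(\nabla_{t,x}u-\nabla_{t,x}v_{\lin})+\nabla_{t,x}v_{\lin}$, apply \eqref{E15} to the first summand, and the angular-concentration / dominated-convergence argument above to the second.
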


 \begin{proof}
 Since the sequence
 $ \left\{\rho(\tau_n,\cdot)\right\}_n$
 is bounded in $L^1(\RR^N)$, one can always extract a subsequence so that \eqref{E82} holds for some positive finite Radon measure $\tmu$ on $\RR^N$. We just need to prove \eqref{E83}.
 Let $\eps>0$. By Proposition \ref{P:E5},
 \begin{equation}
  \label{E85}
  \lim_{n\to\infty} \int_{x_1\geq \eps} \left|\nabla_{t,x}u(t_n,x+(t_n+\overline{A})e_1)-\nabla_{t,x}v_{\lin}(t_n,x+(t_n+\overline{A})e_1)\right|^2\,dx=0,
 \end{equation} 
and
\begin{equation}
 \label{E86}
  \lim_{n\to\infty} \int_{x_1\geq \eps}  \frac{1}{|x|^2}\left|u(t_n,x+(t_n+\overline{A})e_1)\right|^2+\left|u(t_n,x+(t_n+\overline{A})e_1)\right|^{\frac{2N}{N-2}}\,dx=0.
 \end{equation} 
 We are thus reduced to proving that for all $\varphi\in C_0^{\infty}(\RR^N)$,
 \begin{equation}
  \label{E87}
  \lim_{n\to\infty} \int |\nabla_{t,x}v_{\lin}(t_n,x+(t_n+\overline{A})e_1)|^2\varphi(x)\,dx=0.
 \end{equation} 
 Consider the radiation field $G\in L^2(\RR\times S^{N-1})$ associated to $v_{\lin}$ (see Theorem \ref{T:B2}). We have 
 \begin{multline*}
\int \left|\nabla_{t,x}v_{\lin}(t_n,x+(t_n+\overline{A})e_1)\right|^2\varphi(x)\,dx=
\int \left|\nabla_{t,x}v_{\lin}(t_n,x)|^2\varphi(x-(t_n+\overline{A})e_1)\right|\,dx\\
=2\int_0^{+\infty} \int_{S^{N-1}} |G(r-t_n,\omega)|^2\varphi(r\omega-(t_n+\overline{A})e_1)\,d\omega\,dr+o(1)\\
=2\int_{-t_n}^{+\infty}\int_{S^{N-1}} |G(\eta,\omega)|^2\varphi\left( (t_n+\eta)\omega-(t_n+\overline{A})e_1 \right)\,d\omega\,d\eta+o(1).
 \end{multline*}
 Next, notice that $\varphi\left((t_n+\eta)\omega-(t_n+\overline{A})e_1\right)$ goes to $0$ for all $(\eta,\omega)\in \RR\times \left(S^{N-1}\setminus \{e_1\}\right)$. This proves \eqref{E87}, and thus, in view of \eqref{E85} and \eqref{E86} and since $\eps$ can be taken arbitrarily small, \eqref{E83}. 
\end{proof}
In the sequel, we will decompose $\tmu$ as 
\begin{equation}
 \label{E89}
 \tmu=c_0\delta_{\{x=0\}}+\mu,
\end{equation} 
where $\delta_{\{x=0\}}$ is the Dirac measure at $x=0$, $c_0=\tmu(\{0\})\geq 0$, and $\mu$ is a non-negative Radon measure such that $\mu(\{0\})=0$. If $e_1$ is a singular direction, we can prove, using Lemma \ref{L:E11}, that $c_0>0$ but this will not be used in the sequel.
\begin{lemma}
 \label{L:E14}
 Let $\{t_n\}_n$ be a sequence of times going to $+\infty$ as $n$ goes to infinity, and $\eps>0$. Then (after extraction of a subsequence from $\{t_n\}_n$), there exists $\alpha>0$ and two non-negative Radon measures $\mu_0$ and $\mu_1$ on $\RR^N$, and non-negative real numbers $c_0$ and $c_1$ such that 
 \begin{gather}
  \label{E89'}
  \mu_0(\{0\})=\mu_1(\{0\})=0\\
  \label{E90}
 \rho(t_n,\cdot)\xrightharpoonup[n\to\infty]{} \mu_0+c_0\delta_0\\
 \label{E91}
 \rho(t_n-\alpha/10,\cdot)\xrightharpoonup[n\to\infty]{} \mu_1+c_1\delta_1\\
 \label{E92}
 \supp \mu_{j}\subset\{x_1\leq 0\},\quad j=0,1\\
 \label{E93}
 \mu_0\left(\{|x|\leq \alpha\}\right)<\eps\\
 \label{E94}
 \mu_1\left(\left\{|x|\leq \frac{7\alpha}{10}\right\}\right)<\eps.
 \end{gather}
\end{lemma}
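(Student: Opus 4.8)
The plan is to reduce the statement, by a diagonal extraction over the scales $1/m$, to a single smallness assertion about the regular part of the limit of $\rho$ at the shifted times, and then to establish that assertion by finite speed of propagation, exploiting crucially the one-sided support information furnished by Lemma~\ref{L:E13}.

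\emph{Reduction.} Applying Lemma~\ref{L:E13} to the sequence $\{t_n\}_n$ and, successively, to each of the sequences $\{t_n-1/m\}_n$ with $m\geq 1$ (all of which tend to $+\infty$), and extracting diagonally, one obtains a single subsequence of $\{t_n\}_n$, still denoted $\{t_n\}_n$, along which $\rho(t_n,\cdot)\rightharpoonup\tilde{\mu}_0$ and $\rho(t_n-1/m,\cdot)\rightharpoonup\tilde{\nu}_m$ for every $m\geq 1$, where $\tilde{\mu}_0$ and the $\tilde{\nu}_m$ are non-negative Radon measures supported in $\{x_1\leq 0\}$. Write $\tilde{\mu}_0=c_0\delta_0+\mu_0$ and $\tilde{\nu}_m=d_m\delta_0+\nu_m$ with $c_0,d_m\geq 0$ and $\mu_0(\{0\})=\nu_m(\{0\})=0$. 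Since $\mu_0$ does not charge the origin, $\mu_0(\{|x|\leq 10/m\})\to 0$ as $m\to\infty$. It therefore suffices to show that $\nu_m(\{|x|\leq 7/m\})\tend{m}0$; granting this, pick $m$ so large that $\mu_0(\{|x|\leq 10/m\})<\eps$ and $\nu_m(\{|x|\leq 7/m\})<\eps$, and set $\alpha=10/m$ (so that $\alpha/10=1/m$ and $7\alpha/10=7/m$), $\mu_1=\nu_m$, $c_1=d_m$: then \eqref{E89'}--\eqref{E94} all hold.

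\emph{Proof that $\nu_m(\{|x|\leq 7/m\})\tend{m}0$.} Work in the variables shifted at the relevant time. Because the wave equation is reversible, $\vec{u}(t_n-1/m)$ on the physical ball of radius $7/m$ about $(t_n-1/m+\overline{A})e_1$ is determined by $\vec{u}(t_n)$ on the physical ball of radius $8/m$ about the same point, which in the variables shifted at time $t_n$ is the ball of radius $8/m$ centered at $-m^{-1}e_1$, hence is contained in $\{|x|\leq 9/m\}$; conversely, $\vec{u}(t_n)$ near $(t_n+\overline{A})e_1$ is determined by $\vec{u}(t_n-1/m)$ on a physical ball of radius $1/m$, which in the variables shifted at time $t_n-1/m$ is the ball of radius $1/m$ centered at $m^{-1}e_1$. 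Transferring these domain-of-dependence inclusions to $\rho$ by finite speed of propagation — in the form of the local energy inequality for \eqref{NLW}, using Hardy's inequality to dominate the weighted term $|x|^{-2}|u|^2$ on balls lying away from the moving center, and Sobolev's inequality to absorb $\int|u|^{2N/(N-2)}$ by $\int|\nabla u|^2$ on small balls — and letting $n\to\infty$, one obtains
\[
\tilde{\nu}_m\big(\{|x|\leq 7/m\}\big)\;\leq\; c_0+\mu_0\big(\{|x|\leq 10/m\}\big).
\]
On the other hand, the ball of radius $1/m$ centered at $m^{-1}e_1$ is contained in $\{x_1\geq 0\}$ and meets $\{x_1\leq 0\}$ only at the origin; since $\supp\tilde{\nu}_m\subset\{x_1\leq 0\}$, the second inclusion transferred to $\rho$, after shrinking the radius to $0$, yields $c_0\leq d_m$. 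Subtracting the atom,
\[
\nu_m\big(\{|x|\leq 7/m\}\big)=\tilde{\nu}_m\big(\{|x|\leq 7/m\}\big)-d_m\;\leq\;\mu_0\big(\{|x|\leq 10/m\}\big)\tend{m}0 .
\]
Finally, $\supp\mu_0,\ \supp\nu_m\subset\{x_1\leq 0\}$ are those given by Lemma~\ref{L:E13}.

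\emph{Main obstacle.} The delicate point is to carry out the transfer from $\vec{u}$ to the quantity $\rho$ with \emph{no loss in the multiplicative constant}, so that the two displayed inequalities hold exactly as written: for the \emph{focusing} critical equation the local energy density is not sign-definite, so one must use the conservation of energy together with the positivity of the null flux through light cones (which gives the sharp monotonicity of the local energy) and then pass from the local energy to $\rho$ by means of Hardy's inequality (for the $|x|^{-2}|u|^2$ piece) and of Sobolev's inequality on small balls (for the nonlinear piece) without picking up a constant larger than $1$. Conceptually, what has to be checked is that the bubble(s) responsible for the concentration of $\rho$ at the moving point $(t_n+\overline{A})e_1$ at time $t_n$ — that is, for the atom $c_0\delta_0$ — sit, at the slightly earlier time $t_n-1/m$, on the shifted half-space $\{x_1\geq 0\}$, hence outside the support $\{x_1\leq 0\}$ of $\tilde{\nu}_m$ except at the origin, so that this mass is recorded entirely in $d_m\delta_0$ and cannot migrate into the regular part $\nu_m$.
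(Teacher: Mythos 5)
Your overall architecture (diagonal extraction over scales $1/m$, transfer of smallness via finite speed of propagation from time $t_n$ to time $t_n-1/m$, then an $\eps$-choice of scale) is a plausible reorganization of the argument, but the step you yourself flag as the ``main obstacle'' is a genuine gap, not a technicality, and the route you indicate to close it does not work.

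First, the assertion that ``the positivity of the null flux through light cones gives the sharp monotonicity of the local energy'' is false for the \emph{focusing} equation. For \eqref{NLW} the null flux density across the mantle of a backward light cone contains the term $-\frac{N-2}{2N}|u|^{\frac{2N}{N-2}}$ with the unfavorable sign, so the local energy $\int e(u)$ is not monotone under shrinking of light cones. Even if it were monotone, the quantity $\rho$ in \eqref{Edefrho} is not the energy density $e(u)$: it has $+|u|^{\frac{2N}{N-2}}$ and $+|x|^{-2}|u|^2$ with positive signs, so no flux identity for $e(u)$ can directly bound $\rho$. The only tool available is the small data/perturbation theory applied to a cut-off solution, and this necessarily produces a multiplicative constant $C>1$ in front. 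Consequently the two inequalities you need — $\tilde\nu_m(\{|x|\le 7/m\})\le c_0+\mu_0(\{|x|\le 10/m\})$ and $c_0\le d_m$ — can at best be obtained with constants $C$, and then subtracting the atoms gives $\nu_m(\{|x|\le 7/m\})\le (C-1)c_0+C\mu_0(\{|x|\le 10/m\})$, which does not tend to $0$ when $c_0>0$. The scheme of first estimating the full ball including the atom and then subtracting $d_m$ is therefore not closable.

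Second, the paper's own proof deliberately avoids touching the atom. It chooses $\alpha$ so that $\mu_0(\{|x|<\alpha\})<\eps/C_0$ with $C_0$ large enough to \emph{absorb} the constant from the small data/Strichartz argument; then it truncates $u(t_n)$ on an \emph{annulus} $\{2\delta\le |x|\le 9\alpha/10\}$ (keeping away from the origin, hence from the atom of $\mu_0$), propagates smallness backward in time $\alpha/10$ by perturbation theory, and obtains a bound on the annulus $\{\alpha/10+3\delta\le |x-\frac{\alpha}{10}e_1|\le 4\alpha/5\}$ in the $(t_n-\alpha/10)$-shifted frame. This annulus is centered at $\frac{\alpha}{10}e_1$ with inner radius $>\alpha/10$, so it never contains the origin of the shifted frame — i.e., it never sees the atom $c_1\delta_0$. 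The conclusion $\mu_1(\{|x|\le 7\alpha/10\})<\eps$ then follows from the geometric covering fact (Figure 3) that, as $\delta\to 0$, these annuli cover $\{0<|x|<7\alpha/10,\ x_1\le 0\}$, combined with $\supp\mu_1\subset\{x_1\le 0\}$ and $\mu_1(\{0\})=0$. In short: the paper exploits the one-sided support not to control the atom but to make the atom irrelevant, which is why a lossy small-data constant is harmless there. To salvage your proposal you would need to replace the no-loss claim and the inequality $c_0\le d_m$ (whose justification, ``shrinking the radius to $0$'', also conflates finite-speed statements about the \emph{solution} with statements about the \emph{weak limits} $\tilde\mu_0,\tilde\nu_m$, which are not equivalent) by this annular, atom-avoiding argument.
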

\begin{proof}
 By Lemma \ref{L:E13}, there exist a subsequence of $\{t_n\}_n$ and a non-negative measure $\mu_0$ that satisfies \eqref{E89'}, \eqref{E90} and \eqref{E92}. Since $\mu_0$ outer regular, we have:
 $$ 0=\mu_0(\{0\})=\inf_{R>0} \mu_0\left(B(0,R)\right),$$
 and we can find $\alpha>0$ such that 
 \begin{equation}
  \label{E95}
  \mu_0(\{|x|<\alpha\})<\eps/C_0
 \end{equation} 
 for some large constant $C_0>0$ to be specified. By Lemma \ref{L:E13} again, with $\tau_n=t_n-\alpha/10$, there exists (extracting subsequences) a measure $\mu_1$ that satisfies \eqref{E89'}, \eqref{E91} and \eqref{E92}. It remains to check that \eqref{E94} holds. 
 
 Let $\delta>0$. Since $\mu_0\left(\left\{|x|<\alpha\right\}\right)<\eps/C_0$, we have
 \begin{equation}
  \label{E96}
  \limsup_{n\to\infty} \int_{\delta\leq |x|\leq 19\alpha/20} \rho(t_n,x)\,dx<\eps/C_0.
 \end{equation} 
 As a consequence, if $\varphi$ is a $C^{\infty}$ function equal to $1$ for $|x|\geq 2$ and to $0$ for $|x|\leq 1$, and $\psi$ is a $C^{\infty}$ function equal to $1$ for $|x|\leq 9$ and $0$ for $|x|\geq 9.5$, we have
 \begin{equation*}
  \limsup_{n\to\infty} \int \left|\nabla_{t,x}\left(\varphi\left( \frac{x}{\delta} \right)\psi\left( \frac{x}{\alpha} \right)u\left(t_n,(\overline{A}+t_n)e_1+x\right)\right)\right|^2\,dx<C\eps/C_0.
 \end{equation*} 
 Using finite speed of propagation and small data theory, we deduce (choosing $C_0$ large enough):
 \begin{equation}
  \label{E97}
  \limsup_{n\to\infty} \sup_{t_n-\alpha/10\leq t\leq t_n} \int_{\frac{\alpha}{10}+3\delta\leq |x|\leq \frac{4\alpha}{5}}\rho(t,t_n,x)\,dx<\eps, 
 \end{equation} 
 where 
 \begin{equation}
 \label{Edefrhobis}
 \rho(t,t_n,x)=|u(t,(t_n+\overline{A})e_1+x)|^{\frac{2N}{N-2}}+|\nabla_{t,x}u(t,(t_n+\overline{A})e_1+x)|^2+\frac{1}{|x|^2}|u(t,(t_n+\overline{A})e_1+x)|^2. 
 \end{equation} 
 This proves, since $\rho\left(t_n-\alpha/10,t_n,x-\frac{\alpha}{10}e_1\right)=\rho\left(t_n-\frac{\alpha}{10},x\right)$,
 \begin{equation}
  \label{E98}
  \mu_1\left(\left\{\frac{\alpha}{10}+3\delta\leq \left|x-\frac{\alpha}{10}e_1\right|\leq \frac{4\alpha}{5}\right\}\right)<\eps.
 \end{equation} 
 
\begin{figure}
\caption{}
\label{Fig3}
\includegraphics{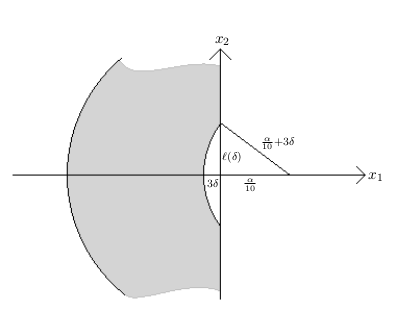} 
\end{figure} 
 The distance $\ell(\delta)$ in Figure \ref{Fig3} is equal to 
 $$\ell(\delta)=\sqrt{\left( \frac{\alpha}{10}+3\delta \right)^2-\left(\frac{\alpha}{10}\right)^2}\underset{\delta\to 0}{\longrightarrow} 0.$$
 Hence, from the figure
 $$\left\{x\;:\; 0<|x|<\frac{7\alpha}{10}\text{ and }x_1\leq 0\right\}\subset \bigcup_{\delta>0} \left\{3\delta +\frac{\alpha}{10}<\left|x-\frac{\alpha}{10}e_1\right|<\frac{4\alpha}{5}\right\}.$$
 Thus by \eqref{E98}, \eqref{E89'} and \eqref{E92},
 \begin{equation}
  \label{E99}
  \mu_1\left(\left\{ |x|\leq \frac{7\alpha}{10}\right\}\right)<\eps.
 \end{equation} 
\end{proof}
\begin{remark}
 \label{R:E14'}
 Let 
 \begin{equation}
  \label{E100}
 R_n(\eps):=\sup_{t_n-\alpha/10\leq t\leq t_n} \int_{\frac{\alpha}{5}\leq |x|\leq \frac{4\alpha}{5}} \rho(t,t_n,x)\,dx,
 \end{equation} 
 where $\rho(t,t_n,x)$ is defined in \eqref{Edefrhobis}. It follows from the proof of Lemma \ref{L:E14} that for large $n$
 \begin{equation}
  \label{E101}
  R_n(\eps)<2\eps.
 \end{equation} 
 (See inequality \eqref{E97} with $\delta=\alpha/30$).
\end{remark}
\begin{proof}[Proof of Proposition \ref{P:E12}]
 Let $\{t_n\}_n\to+\infty$. Let $\alpha$, $\mu_0$, $\mu_1$ be given by Lemma \ref{L:E14}, corresponding to $\eps$. 
 
 Let $\varphi\in C_0^{\infty}(\RR^N)$ such that 
 \begin{equation}
  \label{E102}
  \varphi(x)=0\text{ if }|x|\geq \frac 12,\quad \varphi(x)=1\text{ if } |x|\leq \frac 14.
 \end{equation} 
 Let 
 \begin{align*}
  u_n(t,x)&=u(t,x+(t_n+\overline{A})e_1),\quad \varphi_{\alpha}(x)=\varphi(x/\alpha)\\
  e(u_n)(t,x)&=\frac{1}{2}|\nabla u_n|^2+\frac 12(\partial_t u_n)^2-\frac{N-2}{2N} |u_n|^{\frac{2N}{N-2}}\\
  a_n(t)&=\int (\partial_tu_n)^2\varphi_{\alpha} ,\quad b_n(t)=\int|\nabla u_n|^2\varphi_{\alpha}\\
  c_n(t)&=\int |u_n|^{\frac{2N}{N-2}}\varphi_{\alpha},\quad d_n(t)=\int \partial_{x_1}u_n\partial_t u_n\varphi_{\alpha}.
 \end{align*}
Observe that 
\begin{align}
\label{lim_un1}
 \left|\nabla_{t,x}u_n(t_n,x)\right|^2+\frac{1}{|x|^2} |u_n(t_n,x)|^2+|u_n(t_n,x)|^{\frac{2N}{N-2}}&\xrightharpoonup[n\to\infty]{}c_0\delta_0+\mu_0\\
\label{lim_un2}
 \left|\nabla_{t,x}u_n\left(t_n-\frac{\alpha}{10},x-\frac{\alpha}{10}e_1\right)\right|^2+\frac{1}{|x|^2} \left|u_n\left(t_n-\frac{\alpha}{10},x-\frac{\alpha}{10}e_1\right)\right|^2 &\\
 \notag
 +\left|u_n\left(t_n-\frac{\alpha}{10},x-\frac{\alpha}{10}e_1\right)\right|^{\frac{2N}{N-2}}&\xrightharpoonup[n\to\infty]{}c_1\delta_1+\mu_1.
 \end{align}
We denote the average values of $a_n$, $b_n$, $c_n$ and $d_n$ between $t_n-\alpha/10$ and $t_n$ by $\overline{a}_n$, $\overline{b}_n$, $\overline{c}_n$ and $\overline{d}_n$ respectively: 
$$ \overline{a}_n:=\frac{10}{\alpha}\int_{t_n-\alpha/10}^{t_n} a_n(t)\,dt,$$
and similarly for $\overline{b}_n$, $\overline{c}_n$ and $\overline{d}_n$.

\EMPH{Step 1} By explicit computations, using \eqref{E100}, we obtain, for $t\in [t_n-\alpha/10,t_n]$,
\begin{align}
 \label{E103}
 \frac{d}{dt} \int u_n\partial_tu_n\varphi_{\alpha}&=a_n(t)-b_n(t)+c_n(t)+\OOO(\eps)\\
 \label{E104}
 \frac{d}{dt} \int x\cdot \nabla u_n\partial_t u_n&=-\frac{N}{2} a_n(t)+\left( \frac{N}{2}-1 \right)b_n(t)-\left( \frac{N}{2}-1 \right)c_n(t)+\OOO(\eps)\\
 \label{E105}
 \frac{d}{dt}\int x_1 e(u_n)\varphi_{\alpha}&=-d_n+\OOO(\eps)\\
 \label{E106}
 \frac{d}{dt} \int \partial_{x_1}u_n\partial_tu_n\varphi_{\alpha}&=d_n'(t)=\OOO(\eps)\\
 \label{E107}
 \frac{d}{dt}\int e(u_n)\varphi_{\alpha}&=\frac{1}{2}a_n'(t)+\frac{1}{2}b_n'(t)-\frac{N-2}{2N}c_n'(t)=\OOO(\eps),
\end{align}
where $\OOO(\eps)$ is uniform with respect to $t\in [t_n-\alpha/10,t_n]$. 

These computations are classical. The only thing to check is the bound on the remainder. For example, we have (using Einstein's summation convention)
\begin{multline*}
\frac{d}{dt}\int x_k\partial_{x_k}u_n\partial_tu_n\varphi_{\alpha}=-\frac{N}{2}a_n+\left( \frac{N}{2}-1 \right)b_n -\left( \frac{N}{2}-1 \right)c_n+\\
\frac 12\int x_k\partial_{x_k}\varphi_{\alpha}(\partial_{x_j}u_n)^2
-\frac 12\int x_k\partial_{x_k}\varphi_{\alpha}(\partial_tu_n)^2-\int \partial_{x_k} \varphi_{\alpha} x_j\partial_{x_j} u_n\partial_{x_k}u_n-\frac{N-2}{2N} \int x_j\partial_{x_j}\varphi_{\alpha} |u|^{\frac{2N}{N-2}}
\end{multline*}
and \eqref{E104} follows from \eqref{E101} and the bound $|x_j\partial_{x_k}\varphi_{\alpha}|=\left|\frac{x_j}{\alpha}\partial_{x_k}\varphi\left( \frac{x}{\alpha} \right)\right|\lesssim 1$.

\EMPH{Step 2. Approximate conservation laws}
We prove that for large $n$:
\begin{gather}
\label{E109}
\sup_{t_n-\alpha/10\leq t\leq t_n} \left|d_n(t)-\overline{d}_n\right|\lesssim \eps\,\alpha\\
\label{E110}
\sup_{t_n-\alpha/10\leq t\leq t_n} \left|\frac{1}{2} a_n(t)+\frac{1}{2}b_n(t)-\frac{N-2}{2N}c_n(t)-\frac{1}{2}\overline{a}_n-\frac{1}{2}\overline{b}_n+\frac{N-2}{2N}\overline{c}_n\right|\lesssim \eps\,\alpha.
\end{gather}
Indeed, by \eqref{E106}, there exists a constant $C>0$ such that for all $\tau_1,\tau_2$ with $t_n-\alpha/10\leq \tau_1,\tau_2\leq t_n$,
\begin{equation}
 \label{E111}
 \left|d_n(\tau_1)-d_n(\tau_2)\right|\leq C\eps\alpha,
\end{equation} 
and \eqref{E109} follows. The proof of \eqref{E110} is similar, using \eqref{E107} instead of \eqref{E106}.

\EMPH{Step 3} We prove that for large $n$,
\begin{gather}
 \label{E112}
 -\overline{d}_n=\frac{1}{2}\overline{a}_n+\frac{1}{2}\overline{b}_n-\frac{N-2}{2N}\overline{c}_n+\OOO(\eps)\\
 \label{E113}
 \overline{a}_n-\overline{b}_n+\overline{c}_n=\OOO(\eps)\\
 \label{E114}
 -\frac{N}{2}\overline{a}_n+\left( \frac{N}{2}-1 \right)(\overline{b}_n-\overline{c}_n)=\overline{d}_n+\OOO(\eps).
\end{gather}
\EMPH{Proof of \eqref{E112}} Integrate \eqref{E105} between $t_n-\alpha/10$ and $t_n$ to obtain:
\begin{equation}
 \label{E115}
 -\overline{d}_n=\frac{10}{\alpha}\int \varphi_{\alpha}(x)x_1e(u_n)(t_n,x)\,dx-\frac{10}{\alpha}\int \varphi_{\alpha}(x)x_1e(u_n)\left( t_n-\frac{\alpha}{10},x \right)\,dx+\OOO(\eps).
\end{equation} 
We have for large $n$,
\begin{equation}
 \label{E116}
 \int \varphi(x)x_1e(u_n)(t_n,x)\,dx=\OOO(\alpha\eps).
\end{equation} 
Indeed, by the definition of $\mu_0$ (see \eqref{lim_un1}),
\begin{equation*}
 \lim_{n\to\infty} \int \varphi_{\alpha}(x)|x_1|\left(|\nabla_{t,x}u_n|(t_n,x)+\frac{1}{|x|^2} |u(t_n,x)|^2+|u(t_n,x)|^{\frac{2N}{N-2}}\right)\,dx=\int |x_1|\varphi_{\alpha}(x) d\mu_0(x),
\end{equation*} 
and $\left|\int |x_1|\varphi_{\alpha}(x) d\mu_0(x)\right|\lesssim \eps\alpha$ by Lemma \ref{L:E14} and the bound $\left||x_1|\varphi_{\alpha}(x)\right|\lesssim \alpha$. 

Furthermore, by the change of variable $x=y-\frac{\alpha}{10}e_1$, 
\begin{multline*}
 \int \varphi_{\alpha}(x)x_1e(u_n)\left( t_n-\frac{\alpha}{10},x \right)\,dx=\int \varphi_{\alpha}\left(y-\frac{\alpha}{10}e_1\right)\left(y_1-\frac{\alpha}{10}\right)e(u_n)\left( t_n-\frac{\alpha}{10},y-\frac{\alpha}{10}e_1 \right)\,dy\\
 =\int y_1\varphi_{\alpha}\left(y-\frac{\alpha}{10}e_1\right)e(u_n)\left( t_n-\frac{\alpha}{10},y
 -\frac{\alpha}{10}e_1 \right)\,dy\\
 -\frac{\alpha}{10} \int \varphi_{\alpha}\left(y-\frac{\alpha}{10}e_1\right)e(u_n)\left( t_n-\frac{\alpha}{10},y-\frac{\alpha}{10}e_1 \right)\,dy.
\end{multline*}
By \eqref{lim_un2} and Lemma \ref{L:E14},
the limit of the first term is bounded (up to a multiplicative constant) by 
$$\left|\int |y_1|\varphi_{\alpha}(y-\alpha/10 e_1) d\mu_1(y)\right|\lesssim \eps\alpha.$$
The second term can be rewritten
\begin{multline*}
 -\frac{\alpha}{10} \int \varphi_{\alpha}\left(y-\frac{\alpha}{10}e_1\right)e(u_n)\left( t_n-\frac{\alpha}{10},y-\frac{\alpha}{10}e_1 \right)\,dy=-\frac{\alpha}{10} \int \varphi_{\alpha}\left(x\right)e(u_n)\left( t_n-\frac{\alpha}{10},x\right)\,dx\\
 =-\frac{\alpha}{10} \left(\frac{1}{2} a_n\left( t_n-\frac{\alpha}{10} \right)+\frac{1}{2}b_n\left(t_n-\frac{\alpha}{10}\right) -\frac{N-2}{2N}c_n\left( t_n-\frac{\alpha}{10} \right)\right).
\end{multline*}
Going back to \eqref{E115}, we obtain
\begin{equation*}
 -\overline{d}_n=\frac{1}{2}a_n\left( t_n-\frac{\alpha}{10} \right)+\frac{1}{2}b_n\left( t_n-\frac{\alpha}{10} \right)-\frac{N-2}{2N}c_n\left( t_n-\frac{\alpha}{10} \right)+\OOO(\eps),
\end{equation*} 
which yields \eqref{E112} in view of the approximate conservation of the energy \eqref{E110} proved in Step 2.

\EMPH{Proof of \eqref{E113}} Integrating \eqref{E103} between $t_n-\alpha/10$ and $t_n$, we are reduced to prove the bound:
\begin{equation}
 \label{E117}
 \left|\int \varphi_{\alpha}(x) u_n(t_n,x)\partial_tu_n(t_n,x)\,dx-\int \varphi_{\alpha}(x) u_n\left( t_n-\frac{\alpha}{10},x \right)\partial_tu_n\left( t_n-\frac{\alpha}{10},x \right)\,dx\right|\lesssim \eps\alpha
\end{equation}
for large $n$.
This follows easily from Lemma \ref{L:E14}. For example
\begin{multline*}
 \left|\int \varphi_{\alpha}(x) u_n\left( t_n-\frac{\alpha}{10},x \right)\partial_tu_n\left( t_n-\frac{\alpha}{10},x \right)\,dx\right|\\
 \leq \left(\int \varphi_{\alpha}(x) \frac{1}{\left|x+\frac{\alpha}{10}e_1\right|}|u_n|^2\left(t_n-\frac{\alpha}{10},x\right)\,dx   \right)^{\frac{1}{2}}\\ \times\left(\int \varphi_{\alpha}(x) \left|x+\frac{\alpha}{10}e_1\right| |\partial_t u_n|^2\left(t_n-\frac{\alpha}{10},x\right)\,dx   \right)^{\frac{1}{2}}
\end{multline*}
and 
\begin{multline*}
 \int \varphi_{\alpha}(x) \frac{1}{\left|x+\frac{\alpha}{10}e_1\right|}|u_n|^2\left(t_n-\frac{\alpha}{10},x\right)\,dx\\
 =\int \varphi_{\alpha}\left(y-\frac{\alpha}{10}e_1\right) \left|y\right| \frac{1}{\left|y\right|^2}|u_n|^2\left(t_n-\frac{\alpha}{10},y-\frac{\alpha}{10}e_1\right)\,dy\\
 \lesssim \left|\int |y|\varphi_{\alpha}\left( y-\frac{\alpha}{10}e_1 \right)\,d\mu_1(y)\right|\lesssim \alpha \eps
\end{multline*}
by \eqref{lim_un2} and Lemma \ref{L:E14}. The estimates of the other terms are similar. 

\EMPH{Proof of \eqref{E114}} We integrate \eqref{E104} between $t_n-\alpha/10$ and $t_n$, obtaining
\begin{multline}
 \label{E118}
 -\frac{N}{2} \overline{a}_n+\left( \frac{N}{2}-1 \right)\left(\overline{b}_n-\overline{c}_n \right)\\
 =\OOO(\eps)+\frac{10}{\alpha}\left( -\int \varphi_{\alpha} x\cdot \nabla u_n\left( t_n-\frac{\alpha}{10} \right)\partial_t u_n\left( t_n-\frac{\alpha}{10} \right)+\int \varphi_{\alpha} x\cdot\nabla u_n(t_n)\partial_tu_n(t_n) \right).
\end{multline} 
By computations that are similar to the ones above, the right-hand side of \eqref{E118} is given by 
\begin{equation*}
 \OOO(\eps)+\int \varphi_{\alpha}(x)\partial_{x_1}u_n\left(t_n-\frac{\alpha}{10},x\right)\partial_tu_n\left(t_n-\frac{\alpha}{10},x\right)
 =\OOO(\eps) +d_n\left( t_n-\frac{\alpha}{10} \right)=\OOO(\eps)+\overline{d}_n.
\end{equation*}
At the last line we have used Step 2 to replace $d_n(t_n-\alpha/10)$ by $\overline{d}_n+\OOO(\eps)$.

\EMPH{Step 4. End of the proof}
Subtracting \eqref{E112} and \eqref{E114}, we obtain
$$\left( \frac{N}{2}-\frac{1}{2} \right)\left( \overline{b}_n-\overline{a}_n \right)-\left( \frac{N}{2}-\frac{1}{2}-\frac{1}{N} \right)\overline{c}_n=\OOO(\eps)$$
for large $n$.
Adding $\left( \frac{N}{2}-\frac{1}{2} \right)$\eqref{E113} we deduce
\begin{equation}
 \label{E119}
 \overline{c}_n=\OOO(\eps).
\end{equation} 
Combining this with \eqref{E112}, we obtain
\begin{equation}
 \label{E121}
 \frac{1}{2}\overline{a}_n+\frac{1}{2}\overline{b}_n+\overline{d}_n=\OOO(\eps).
\end{equation} 
Hence, for large $n$,
\begin{equation}
 \label{E122}
 \frac{1}{\alpha}\int_{t_n-\alpha/10}^{t_n} \int \left( (\partial_tu_n(t,x)+\partial_{x_1}u_n(t,x))^2+|\nabla_{x'}u_n(t,x)|^2 \right)\varphi_{\alpha}(x)\,dx\,dt=\OOO(\eps),
\end{equation} 
where $\nabla_{x'}=(\partial_{x_2},\ldots,\partial_{x_N})$.

As a consequence, we obtain a sequence $\{t_n'\}\to\infty$ such that for all $n$, $t_n-\frac{\alpha}{10}\leq t_n'\leq t_n$ and
\begin{equation}
 \label{E123}
 \forall n,\quad \int \left( (\partial_tu+\partial_{x_1}u )^2(t_n',x)+|\nabla_{x'}u|^2(t_n',x)\right)\varphi\left( \frac{x-(\overline{A}+t_n)e_1}{\alpha} \right)\,dx \leq C\eps.
\end{equation} 
Since 
$$|x-(t_n'+A)e_1|\leq \frac{\alpha}{10}\Longrightarrow \left|x-(A+t_n)e_1\right|\leq \frac{\alpha}{5}\Longrightarrow \varphi\left( \frac{x-(A+t_n)e_1}{\alpha} \right)=1,$$
we obtain \eqref{E81} (renormalizing $\alpha$ and $\eps$). The proof of Proposition \ref{P:E12} is complete.
\end{proof}

\section{Elimination of singular points and end of the proof}
\label{S:end_of_proof}
We are now ready to conclude the proof of Theorem \ref{T:E1}. We will prove that $\SSS$ is empty, contradicting Proposition \ref{P:E7}. 

We argue by contradiction assuming (after a rotation in the space variable) $e_1\in \SSS$. We let, for $(f,g)\in (\hdot\times L^2)(\RR^N)$, 
\begin{equation}
 \label{E124}
 \|(f,g)\|^2_{e_1}=\|g+\partial_{x_1}f\|^2_{L^2}+\sum_{j=2}^{N}\|\partial_{x_j}f\|_{L^2}^2.
\end{equation} 
\begin{remark}
 \label{R:E14''}
 If $u_{\lin}$ is a solution to the linear wave equation with initial data in $\hdot\times L^2$, then $\left\|(u_{\lin}(t),\partial_tu_{\lin}(t)\right\|_{e_1}$ is independent of $t$. Indeed,
 \begin{equation}
  \label{E127}
  \left\|(u_{\lin}(t),\partial_tu_{\lin}(t))\right\|_{e_1}^2=\left\|\partial_{x_1}u_{\lin}(t)\right\|^2_{L^2}+\|\partial_tu_{\lin}(t)\|^2_{L^2}+ 2\int_{\RR^N} \partial_{x_1}u_{\lin}(t)\partial_tu_{\lin}(t)+\sum_{k=2}^N \|\partial_{x_k}u_{\lin}(t)\|^2_{L^2},
 \end{equation} 
 and the conservation of $\|(u_{\lin}(t),\partial_tu_{\lin}(t)\|_{e_1}$ follows from energy and momentum conservations.
\end{remark}
\begin{remark}
 \label{R:E14'''}
 Denote by $(\cdot,\cdot)_{e_1}$ the scalar product associated to the norm $\|\cdot\|_{e_1}$. 
 Let $\{(v_0^n,v_1^n)\}_n$ be a sequence bounded in $\hdot\times L^2$ that has a profile decomposition \profiles. Then
 $$j\neq k\Longrightarrow \lim_{n\to\infty}\left(\vec{U}_{\lin,n}^k(0),\vec{U}_{\lin,n}^j(0)\right)_{e_1}=0,\quad 1\leq j\leq J\Longrightarrow
 \lim_{n\to\infty}\left(\vec{U}_{\lin,n}^j(0),\vec{w}_n^J(0)\right)_{e_1}=0,$$
 where $w_n^J$ is the remainder of the profile decomposition (see \eqref{wnJ}). This follows from Remark \ref{R:E14''} and the same argument than the one used to prove the orthogonality of the energy of the profiles (see Lemma 2.3 of \cite{DuJiKeMe16P} for a proof).
 
 As a consequence, the following Pythagorean expansion holds: for all $J\geq 1$, 
 \begin{equation}
  \label{E128}
  \left\|\left(v_0^n,v_1^n\right)\right\|^2_{e_1}=\sum_{j=1}^J \left\|U_{\lin}^j(0)\right\|^2_{e_1}+\left\|w_n^J(0)\right\|^2_{e_1}+o(1)
 \end{equation} 
 as $n\to\infty$. 
\end{remark}
We will use the following claim, proved in the appendix:
\begin{claim}
 \label{Cl:E15}
 Let $\beta>0$, $M>0$. Then there exists $\eps=\eps(M,\beta)>0$ such that if $(v_0,v_1)\in \hdot\times L^2$ satisfies 
 \begin{equation}
  \label{E125}
  \left\|(v_0,v_1)\right\|_{e_1}\leq \eps\text{ and } \|(v_0,v_1)\|_{\hdot\times L^2}\leq M,
 \end{equation} 
 then
 \begin{equation}
  \label{E126}
  \left\|S_L(t)(v_0,v_1)\right\|_{S(\RR)}\leq \beta.
 \end{equation} 
\end{claim}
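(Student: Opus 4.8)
The plan is to argue by contradiction, combining the profile decomposition with the Pythagorean expansion of $\|\cdot\|_{e_1}$ from Remark \ref{R:E14'''}. Suppose the claim fails: then there are $\beta,M>0$ and a sequence $\{(v_0^n,v_1^n)\}_n$ in $\hdot\times L^2$ with $\|(v_0^n,v_1^n)\|_{\hdot\times L^2}\leq M$ and $\|(v_0^n,v_1^n)\|_{e_1}\to 0$, yet $\|S_L(\cdot)(v_0^n,v_1^n)\|_{S(\RR)}>\beta$ for every $n$. Since the sequence is bounded in $\hdot\times L^2$, after extraction it admits a profile decomposition \profiles.

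The first point I would establish is that $\|\cdot\|_{e_1}$ separates points of $\hdot\times L^2$: if $\|(f,g)\|_{e_1}=0$ then $(f,g)=0$. This is elementary from \eqref{E124}: the vanishing forces $\partial_{x_j}f=0$ in $L^2(\RR^N)$ for $j=2,\ldots,N$, so $f$ is independent of $x_2,\ldots,x_N$; then $\partial_{x_1}f\in L^2(\RR^N)$ is independent of those variables as well, hence vanishes identically, so $\nabla f\equiv 0$, $\|f\|_{\hdot}=0$, and finally $g=g+\partial_{x_1}f=0$. (Equivalently, one may note via Theorem \ref{T:B2} and Remark \ref{R:E14''} that $\|(f,g)\|_{e_1}^2=2\int_{\RR\times S^{N-1}}(1-\omega_1)\,|G_+(\eta,\omega)|^2\,d\eta\,d\omega$, where $G_+$ is the radiation field of $S_L(\cdot)(f,g)$, and use that $1-\omega_1>0$ for $\omega\neq e_1$.)

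Next I would feed this into the profile decomposition. Applying the Pythagorean expansion \eqref{E128} for $\|\cdot\|_{e_1}$, for each fixed $J$ one has $\|(v_0^n,v_1^n)\|_{e_1}^2=\sum_{j=1}^J\|U^j_{\lin}(0)\|_{e_1}^2+\|w_n^J(0)\|_{e_1}^2+o(1)$ as $n\to\infty$. Letting $n\to\infty$ and using that the left-hand side tends to $0$ while every term on the right is nonnegative yields $\|U^j_{\lin}(0)\|_{e_1}=0$ for all $j$, hence $U^j_{\lin}\equiv 0$ for all $j$ by the previous paragraph. Then $U^j_{\lin,n}\equiv 0$ for all $j,n$, so $w_n^J=S_L(\cdot)(v_0^n,v_1^n)$ for every $J$ and $n$ (see \eqref{wnJ}), and the remainder estimate $\lim_{J\to\infty}\limsup_{n\to\infty}\|w_n^J\|_{S(\RR)}=0$ gives $\limsup_{n\to\infty}\|S_L(\cdot)(v_0^n,v_1^n)\|_{S(\RR)}=0$, contradicting the choice of the sequence. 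This completes the proof.

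I do not foresee a genuine obstacle: the whole argument rests on the definiteness of $\|\cdot\|_{e_1}$ and on its Pythagorean behaviour along profiles, both of which are either immediate or already recorded, after which standard profile-decomposition bookkeeping closes the contradiction. The only mild care needed is to keep $J$ fixed when passing to the limit in $n$ (so as to handle possibly infinitely many profiles), to respect the order of the double limit in the remainder estimate, and to note that the hypothesis $\|(v_0,v_1)\|_{\hdot\times L^2}\leq M$ enters only to guarantee the existence of the profile decomposition.
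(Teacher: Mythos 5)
Your proof is correct and follows essentially the same route as the paper: argue by contradiction, extract a profile decomposition, invoke the Pythagorean expansion of $\|\cdot\|_{e_1}$ from Remark \ref{R:E14'''} to force every profile to have vanishing $e_1$-norm, and conclude via the remainder estimate $\lim_{J\to\infty}\limsup_{n\to\infty}\|w_n^J\|_{S(\RR)}=0$. The one thing you add beyond the paper's terse version is the explicit check that $\|\cdot\|_{e_1}$ is positive-definite on $\hdot\times L^2$ (so that vanishing $e_1$-norm really forces $U^j_{\lin}\equiv 0$); the paper uses this silently, and both your elementary argument and the radiation-field identity $\|(f,g)\|_{e_1}^2=2\int(1-\omega_1)|G_+|^2$ are correct ways to see it.
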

Let $\eps>0$ given by Claim \ref{Cl:E15} with $\beta=\delta_3/2$, $\delta_3$ given by Lemma \ref{L:E11} and 
$$M=\sup_{t\geq 0}\left\|\vec{u}(t)\right\|_{\hdot\times L^2}.$$
By Proposition \ref{P:E12}, there exists a sequence $\{t_n'\}\to +\infty$ and $\alpha>0$ such that 
\begin{equation}
 \label{E140}
 \limsup_{n\to+\infty} \int_{\left|x-(t_n'+\overline{A})e_1\right|<\alpha}\left( \partial_{x_1}u(t_n')+\partial_tu(t_n')\right)^2+\sum_{j=2}^J |\partial_{x_j}u(t_n')|^2\,dx\leq \eps.
\end{equation} 

By Lemma \ref{L:E11}, there exists a subsequence of $\{t_n'\}$, that we will still denote by $\{t_n'\}_n$, such that $\{u(t_n')\}_n$ has a profile decomposition \profiles with the following properties:
\begin{gather}
 \label{E141}
 \lim_{n\to\infty} x_{1,n}-(\overline{A}+t_n')e_1=0\\
 \label{E142}
 \lim_{n\to\infty} \lambda_{1,n}=\lim_{n\to\infty}t_{1,n}=0\\
 \label{E143}
 \left\|U_{\lin}^1\right\|_{S(\RR)}\geq \delta_3.
\end{gather}
By Remark \ref{R:E14'''},
\begin{equation}
 \label{E144}
 \lim_{n\to\infty}
 \left(\vec{u}(t'_n),\vU^1_{\lin,n}(0)\right)_{e_1}=\left\|\vU_{\lin}^1(0)\right\|^2_{e_1}.
\end{equation} 
Next, notice that it follows from \eqref{E141} and \eqref{E142} that
\begin{equation}
 \label{E145}
 \lim_{n\to \infty} \int_{|x-(t_n'+\overline{A})e_1|\geq \alpha}\left|\nabla_{t,x}U_{\lin,n}^1(0)\right|^2\,dx=0.
\end{equation} 
Indeed, this integral can be rewritten
\begin{equation}
 \label{E146}
 \int_{|\lambda_{1,n}y+o(1)|\geq \alpha} \left|\nabla_{t,x}U^1_{\lin,n}\left( \frac{-t_{1,n}}{\lambda_{1,n}},y \right)\right|^2\,dy,
\end{equation} 
where $o(1)=x_{1,n}-(\overline{A}+t_n')e_1$ goes to $0$ as $n$ goes to infinity by \eqref{E141}. The desired limit \eqref{E145} follows immediately if $-t_{1,n}/\lambda_{1,n}$ is bounded. If not, say if 
$\lim_{n\to \infty}-t_{1,n}/\lambda_{1,n}=-\infty$ after extraction of a subsequence, we can rewrite \eqref{E146} as 
$$ 2\int_{|\lambda_{1,n}r+o(1)|\geq \alpha}\int_{S^{N-1}} \left|G\left( r+\frac{t_{1,n}}{\lambda_{1,n}},\omega \right)\right|^2\,d\omega\,dr$$
where $G\in L^2(\RR\times S^{N-1})$ is the radiation field associated to $U^1_{\lin}$ as $t\to -\infty$ (see Theorem \ref{T:B2}). This integral goes to $0$ as $n$ goes to infinity since 
$$ \left|r+\frac{t_{1,n}}{\lambda_{1,n}}\right|\geq \frac{\alpha+o(1)}{\lambda_{1,n}}\underset{n\to\infty}{\longrightarrow} +\infty$$
on the domain of integration (using $\lim_nt_{1,n}=0$). This proves \eqref{E145}.

By \eqref{E145}, as $n\to\infty$,
\begin{multline*}
 \left(\vec{u}(t_n'),\vU_{\lin}^1(0)\right)_{e_1} \\
 \leq \left\| \vU_{\lin}^1(0)\right\|_{e_1}\sqrt{\int_{|x-(t_n'+\overline{A})e_1|\leq \alpha}\left|\partial_{x_1}u(t_n')+\partial_tu(t_n')\right|^2+\sum_{k=2}^N |\partial_{x_k} u(t_n')|^2}+o(1).
\end{multline*}
Combining with \eqref{E140}, we obtain
\begin{equation*}
 \left(\vec{u}(t_n'),\vU_{\lin,n}^1(0)\right)_{e_1}\leq \eps \left\|U_{\lin}^1(0)\right\|_{e_1}+o(1)\text{ as }n\to\infty.
\end{equation*} 
Hence by \eqref{E144},
\begin{equation}
 \label{E147}
\left\|\vU_{\lin}^1(0)\right\|_{e_1}\leq \eps.
\end{equation} 
But then by the definition of $\eps$ (from Claim \ref{Cl:E15} with $\beta=\frac{\delta_3}{2}$ and $M=\sup_{t\geq 0} \left\|\vec{u}(t)\right\|_{\hdot\times L^2}$):
$$ \left\|U_{\lin}^1\right\|_{S(\RR)}\leq \frac{\delta_3}{2},$$
contradicting \eqref{E143}. The proof is complete. \qed

\appendix
\section{Radiation field for linear wave equations}
\label{S:radiation}
In this appendix we prove Theorem \ref{T:B2}. 
\subsection{Introduction of a function space}
We start by reformulating this theorem in term of a space of functions on $\RR\times S^{N-1}$ that we will define now. Let
$$\dot{H}^1_{\eta}(\RR\times S^{N-1})=\left\{g\in C^0\left(\RR,L^2(S^{N-1})\right)\; :\; \int_{\RR\times S^{N-1}} |\partial_{\eta}g(\eta,\omega)|^2\,d\eta d\omega<\infty\right\}.$$
Let $g\in \dot{H}^1_{\eta}$. We note that $\left\|\partial_{\eta}g\right\|_{L^2(\RR\times S^{N-1})}=0$ if and only if there exists $a\in L^2(S^{N-1})$ such that for all $\eta\in \RR$, for almost all $\omega\in S^{N-1}$, $g(\eta,\omega)=a(\omega)$. We define $\dot{\HHH}^1_{\eta}$ as the quotient space of $\dot{H}^1_{\eta}$ by the equivalence relation:
$$ g\sim \tilde{g}\iff \exists a\in L^2(S^{N-1})\;:\; \forall \eta \in \RR, \, g(\eta,\omega)-\tilde{g}(\eta,\omega)=a(\omega)\quad \text{ for a.a. }\omega\in S^{N-1}.$$
We denote by $\overline{g}\in \dot{\HHH}^1_{\eta}$ the equivalence class of $g\in \dot{H}^1_{\eta}$, and we define the following norm on $\dot{\HHH}^1_{\eta}$:
$$ \left\|\overline{g}\right\|_{\dot{\HHH}^1_{\eta}}=\left\|\partial_{\eta}g\right\|_{L^2(\RR\times S^{N-1})}.$$
Then:
\begin{prop}
\label{P:1}
 The normed space $\dot{\HHH}^1_{\eta}$ is a Hilbert space, and $C^{\infty}_0\left(\RR\times S^{N-1}\right)$ is dense in $\dot{\HHH}^1_{\eta}$. The map $\overline{g}\mapsto \partial_{\eta}g$ is a bijective isometry from $\HHH^1_{\eta}$ to $L^2(\RR\times S^{N-1})$.
\end{prop}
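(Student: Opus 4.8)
The plan is to prove everything by reducing to the obvious candidate for the isometry. Write $T\colon\dot{\HHH}^1_{\eta}\to L^2(\RR\times S^{N-1})$ for the map $\overline{g}\mapsto\partial_{\eta}g$. It is well defined on the quotient (if $g\sim\tilde g$ then $\partial_\eta(g-\tilde g)=0$), it is linear, and it is an isometry by the very definition of the norm on $\dot{\HHH}^1_{\eta}$. Injectivity of $T$ is exactly the remark recorded just before the statement: $\partial_\eta g=0$ forces $g(\eta,\omega)=a(\omega)$ for some $a\in L^2(S^{N-1})$, i.e. $\overline{g}=0$ in $\dot{\HHH}^1_{\eta}$.

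For surjectivity I would, given $h\in L^2(\RR\times S^{N-1})$, set $g(\eta,\omega):=\int_0^\eta h(s,\omega)\,ds$. Cauchy--Schwarz in $s$ gives, for $\eta_1<\eta_2$, the bound $\|g(\eta_2)-g(\eta_1)\|_{L^2(S^{N-1})}^2\le|\eta_2-\eta_1|\int_{\eta_1}^{\eta_2}\!\int_{S^{N-1}}|h(s,\omega)|^2\,d\omega\,ds$, which tends to $0$ with $|\eta_2-\eta_1|$ by absolute continuity of the Lebesgue integral; hence $g\in C^0(\RR,L^2(S^{N-1}))$ (indeed locally $\tfrac12$-H\"older), and applying the one-dimensional fundamental theorem of calculus for a.e.\ $\omega$ shows $\partial_\eta g=h$ in the distributional sense, so $g\in\dot H^1_\eta$ and $T\overline g=h$. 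Thus $T$ is a linear bijective isometry onto the Hilbert space $L^2(\RR\times S^{N-1})$; since the norm on $\dot{\HHH}^1_{\eta}$ equals $\|T\,\cdot\|_{L^2}$, it satisfies the parallelogram law and $\dot{\HHH}^1_{\eta}$ inherits completeness from $L^2$, so it is a Hilbert space and the last assertion of the proposition holds.

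It remains to prove density of $\Cio(\RR\times S^{N-1})$. Through $T$ this is equivalent to the statement that $\{\partial_\eta\varphi:\varphi\in\Cio(\RR\times S^{N-1})\}$ is dense in $L^2(\RR\times S^{N-1})$, and since $\Cio$ is already dense in $L^2$ it suffices to approximate in $L^2$ an arbitrary $g\in\Cio(\RR\times S^{N-1})$, say with $\supp g\subset[-R,R]\times S^{N-1}$, by functions of the form $\partial_\eta\varphi$. I would take $\Phi(\eta,\omega):=\int_{-\infty}^\eta g(s,\omega)\,ds$, which is smooth but not compactly supported, since $\Phi(\eta,\omega)=m(\omega):=\int_\RR g(s,\omega)\,ds$ for $\eta>R$, and multiply by a slowly decaying cutoff: choose $\chi_L\in\Cio(\RR)$ with $\chi_L\equiv1$ on $[-R-1,R+1]$, $\supp\chi_L\subset[-R-2,L+1]$, and $|\chi_L'|\le C/L$. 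Then $\varphi_L:=\chi_L\Phi\in\Cio(\RR\times S^{N-1})$ and, because $\chi_L\equiv1$ on $\supp g$, one gets $\partial_\eta\varphi_L=g+\chi_L'\Phi$, where $\chi_L'\Phi$ is supported in the region $\{\eta>R\}$ on which $\Phi(\eta,\omega)=m(\omega)$, whence $\|\chi_L'\Phi\|_{L^2}^2\le(C^2/L)\,\|m\|_{L^2(S^{N-1})}^2\to0$ as $L\to\infty$.

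The step that requires genuine care is this last one: because every element of the range of $\partial_\eta$ on $\Cio$ has vanishing integral in $\eta$, one cannot merely antidifferentiate $g$ and truncate — the truncation error carries exactly the ``missing mass'' $m(\omega)$, and it is essential to spread the cutoff over an interval of length $\sim L$ so that $\|\chi_L'\|_{L^\infty}\to0$ and the error disappears in the limit. The remaining ingredients (well-definedness on the quotient, the isometry identity, continuity and distributional differentiation of the antiderivative) are routine.
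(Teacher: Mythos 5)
Your argument is correct, and it takes a genuinely different route from the one in the paper. The paper proves completeness by hand (normalize a Cauchy sequence so $g_n(0,\cdot)=0$, deduce local uniform convergence in $C^0(\RR,L^2(S^{N-1}))$ from Cauchy--Schwarz, and identify the limit of $\partial_\eta g_n$), and it proves density of $C^\infty_0$ in two steps working \emph{inside} $\dot\HHH^1_\eta$: first truncate by $\varphi(\cdot/R)g$ and show the error $\frac1R\varphi'(\cdot/R)g$ vanishes in $L^2$ via a direct estimate on $\int_R^{2R}\int_{S^{N-1}}|g|^2$, then mollify in $\eta$ and project on finitely many spherical harmonics. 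You instead prove the isometry $T:\overline g\mapsto\partial_\eta g$ onto $L^2$ first (surjectivity via the antiderivative $\int_0^\eta h$, exactly as in the paper's last subsection), and then let $T$ do all the work: completeness and the Hilbert structure are transported from $L^2$, and density of $C^\infty_0$ becomes density of $\{\partial_\eta\varphi:\varphi\in C^\infty_0\}$ in $L^2$, which you obtain by antidifferentiating a smooth compactly supported $g$ and spreading the truncation over an interval of length $\sim L$. This buys you a cleaner logical structure and saves the mollification step; what it costs is that you must confront head-on the constraint $\int_\RR\partial_\eta\varphi\,d\eta=0$, which you correctly identify as the only delicate point.

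One small wording slip: as stated, $\chi_L\equiv1$ on $[-R-1,R+1]$, $\supp\chi_L\subset[-R-2,L+1]$ and $|\chi_L'|\le C/L$ are mutually incompatible for large $L$ (the transition on $[-R-2,-R-1]$ forces $|\chi_L'|\gtrsim 1$ there). The derivative bound should be imposed only on $[R+1,L+1]$, which is where it is used, since $\Phi\equiv 0$ on $[-R-2,-R-1]$ so the term $\chi_L'\Phi$ vanishes there regardless --- a fact you do observe. Alternatively one can drop the left cutoff altogether, taking $\chi_L\equiv1$ on $(-\infty,R+1]$; then $\chi_L\Phi$ is still compactly supported because $\Phi$ vanishes for $\eta<-R$. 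With that cosmetic adjustment the proof is complete and correct.
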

We note that the proposition implies that $\dot{\HHH}^1_{\eta}$ is the closure of $C^{\infty}_0(\RR\times S^{N-1})$ for the norm $\|\partial_{\eta} \cdot\|_{L^2(\RR\times S^{N-1})}$. 
In view of Proposition \ref{P:1}, the following is equivalent to Theorem \ref{T:B2}:
\begin{theoint}
\label{T:B2'}
 Assume $N\geq 3$ and let $v$ be a solution of the linear wave equation \eqref{E3} with initial data $(v_0,v_1)\in \hdot\times L^2$. Then
 \begin{equation}
  \label{A4}
  \lim_{t\to+\infty} \left\|\frac{1}{r}\nabla_{\omega}v(t)\right\|_{L^2}+\left\|\frac 1r v(t)\right\|_{L^2}=0
 \end{equation} 
 and there exists a unique $\overline{g}\in \dot{\HHH}^1_{\eta}$ such that
 \begin{equation}
  \label{A1}
  \lim_{t\to+\infty} \int_0^{+\infty} \int_{S^{N-1}} \left|\partial_{r,t}\left( r^{\frac{N-1}{2}} v(t,r\omega)-g(r-t,\omega)\right)\right|^2\,d\omega\,dr=0.
 \end{equation} 
 Furthermore,
 \begin{equation} 
  \label{A2}
  E_{\lin}(v_0,v_1)=\int_{\RR\times S^{N-1}} |\partial_{\eta}g(\eta,\omega)|^2\,d\eta d\omega=\left\|\overline{g}\right\|^2_{\dot{\HHH}^1_{\eta}}
 \end{equation}
and the map
\begin{align*}
 (v_0,v_1)&\mapsto \sqrt{2}\overline{g}\\
\dot{H}^1\times L^2&\to \dot{\HHH}^1_{\eta}
 \end{align*}
is a bijective isometry.
\end{theoint}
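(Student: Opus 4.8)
The plan is to construct the radiation field explicitly on a dense subspace by means of the Fourier transform, to verify \eqref{A1}, \eqref{A4} and the Plancherel identity \eqref{A2} there, and then to transfer everything to general data by density, using conservation of the $\hdot\times L^2$–norm, Hardy's inequality and Proposition \ref{P:1} to control the remainders; uniqueness in \eqref{A1} will be free, and surjectivity will follow from the fact that an isometry into a Hilbert space has closed range, together with an explicit inversion on a dense set of profiles. Concretely, let $\mathcal D\subset\hdot\times L^2$ be the (dense) set of data whose Fourier transforms lie in $C_0^\infty(\RR^N\setminus\{0\})$. For $(v_0,v_1)\in\mathcal D$ put $a(\xi)=\tfrac12\big(\hat v_0(\xi)+i|\xi|^{-1}\hat v_1(\xi)\big)$, so that $v(t,x)=2\re\int_{\RR^N}e^{i(x\cdot\xi-t|\xi|)}a(\xi)\,d\xi$. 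Passing to polar coordinates $\xi=\rho\theta$ and $x=r\omega$ and applying stationary phase to the integral over $S^{N-1}$ — whose only critical points are $\theta=\pm\omega$ — I would obtain, for $r\gg1$,
\[
r^{\frac{N-1}{2}}v(t,r\omega)=g(r-t,\omega)+I(t,r,\omega)+\mathcal E(t,r,\omega),
\]
where, up to an explicit constant and phase,
\[
g(\eta,\omega)=\re\Big[c_N\int_0^\infty\rho^{\frac{N-1}{2}}a(\rho\omega)\,e^{i\rho\eta}\,d\rho\Big]
\]
is the candidate radiation field (smooth, and Schwartz in $\eta$ together with its $\omega$–derivatives since $a$ is localized away from $0$), $I$ is the ``incoming'' part coming from the critical point $\theta=-\omega$ and carrying the oscillation $e^{-i\rho(r+t)}$, and $\mathcal E$ is the stationary–phase remainder, which is $O(r^{-1})$ along with its $r,t$–derivatives and, modulo rapidly decreasing tails, concentrated in the region $r\sim t$.

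\emph{Work on $\mathcal D$.} Since $\partial_r g=\partial_\eta g=-\partial_t g$, one has $\partial_{r,t}\big(r^{\frac{N-1}{2}}v(t,r\omega)-g(r-t,\omega)\big)=\partial_{r,t}I+\partial_{r,t}\mathcal E+O\big(r^{-1}g(r-t,\omega)\big)$. After the substitution $s=r+t$, $\partial_{r,t}I(t,\cdot)$ is the restriction to $\{s>t\}$ of a fixed function in $L^2(\RR\times S^{N-1})$ (a one–dimensional Fourier transform of $\rho^{\frac{N+1}{2}}a(-\rho\omega)\mathbf{1}_{\rho>0}$), hence tends to $0$ in $L^2$ as $t\to+\infty$; the remainder and the $r^{-1}g$ terms are handled using the $O(r^{-1})$ bound on $r\gtrsim1$, the Schwartz decay of $g$ where $|r-t|\gtrsim t$, and local energy decay of $v$ on $r\lesssim1$ (where the expansion fails but $v\to0$ uniformly on compact sets). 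This proves \eqref{A1} on $\mathcal D$; the same three ingredients applied to $r^{N-3}|v|^2$ and $r^{N-3}|\nabla_\omega v|^2$ give \eqref{A4} on $\mathcal D$. Identity \eqref{A2} on $\mathcal D$ is then a one–dimensional Plancherel computation: $\int_\RR|\partial_\eta g(\eta,\omega)|^2\,d\eta$ is an explicit constant times $\int_0^\infty\rho^{N+1}|a(\rho\omega)|^2\,d\rho$, the cross term produced by taking real parts vanishing because the relevant one–dimensional Fourier support lies in $\{\rho>0\}$; integrating in $\omega$ and using $\rho^{N-1}\,d\rho\,d\omega=d\xi$, $\rho^{2}\rho^{N-1}=|\xi|^{2}$, together with the parallel computation of $E_{\lin}(v_0,v_1)=\tfrac12\int|\xi|^2|\hat v_0|^2+|\hat v_1|^2$ in terms of $a$, the two constants match and $\|\partial_\eta g\|_{L^2}^2=E_{\lin}(v_0,v_1)$; the factor $\sqrt2$ in the statement accounts for $\|(v_0,v_1)\|_{\hdot\times L^2}^2=2E_{\lin}$.

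\emph{Density and surjectivity.} For general $(v_0,v_1)$, choose $(v_0^\delta,v_1^\delta)\in\mathcal D$ with $\|(v_0,v_1)-(v_0^\delta,v_1^\delta)\|_{\hdot\times L^2}<\delta$ and write $v=v^\delta+w$. The isometry on $\mathcal D$ extends to an isometry $T\colon\hdot\times L^2\to\dot{\HHH}^1_{\eta}$, so $\|\overline{g_w}\|_{\dot{\HHH}^1_{\eta}}=O(\delta)$; by Hardy's inequality and energy conservation, $\sup_t\big(\|\tfrac1r w(t)\|_{L^2}+\|\tfrac1r\nabla_\omega w(t)\|_{L^2}\big)\lesssim\|\nabla w_0\|_{L^2}=O(\delta)$. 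Hence the quantities in \eqref{A1} and \eqref{A4} for $v$ differ from those for $v^\delta$ by $O(\delta)$ uniformly in $t$, and letting $t\to+\infty$ and then $\delta\to0$ yields \eqref{A1}, \eqref{A4} and $\overline g=T(v_0,v_1)/\sqrt2$ for all data. Uniqueness in \eqref{A1} is immediate: two solutions $g_1,g_2$ satisfy $\|\partial_\eta(g_1-g_2)\|_{L^2(\RR\times S^{N-1})}=0$, i.e.\ $\overline{g_1}=\overline{g_2}$ in $\dot{\HHH}^1_{\eta}$. Finally, $T$ is an isometry into the Hilbert space $\dot{\HHH}^1_{\eta}$ (Proposition \ref{P:1}), so its range is closed, and it is enough to see the range is dense: given $\varphi\in C_0^\infty(\RR\times S^{N-1})$, expanding in spherical harmonics on $S^{N-1}$ and inverting the one–dimensional Fourier transform in $\eta$ produces, mode by mode, a function $a$ with $\int|\xi|^2|a(\xi)|^2\,d\xi<\infty$, hence data in $\hdot\times L^2$ whose radiation field is the class of a primitive of $\varphi$; since such classes are dense in $\dot{\HHH}^1_{\eta}$ by Proposition \ref{P:1}, $T$ is onto.

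\emph{Main obstacle.} The delicate step is the rigorous control of the asymptotics on $\mathcal D$: extracting $g$ as a genuine function of $r-t$ and $\omega$ and showing that both the incoming contribution and the stationary–phase remainder tend to $0$ in the exact norm $\|\partial_{r,t}(\cdot)\|_{L^2(\RR_+\times S^{N-1})}$ uniformly as $t\to+\infty$ — in particular reconciling the region $r\lesssim1$, where the expansion degenerates but $v$ is smooth and small, with the region $r\sim t$, where $g$ concentrates. Everything afterwards is either a one–dimensional Plancherel identity, an approximation argument, or the abstract statement that an isometry into a Hilbert space with dense range is bijective.
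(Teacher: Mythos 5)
Your argument is correct in outline but takes a genuinely different route from the paper's. You build the radiation field by Fourier synthesis and stationary phase on the sphere: writing $v(t,x)=2\re\int e^{i(x\cdot\xi-t|\xi|)}a(\xi)\,d\xi$ with $a$ compactly supported away from $\xi=0$, the critical point $\theta=\omega$ produces $g$ as a one--dimensional Fourier transform of $\rho^{\frac{N-1}{2}}a(\rho\omega)$, the critical point $\theta=-\omega$ produces an incoming part depending on $r+t$, and the expansion error is $O(r^{-1})$ with rapid decay in $|r\pm t|$. The paper instead proves (Lemma~\ref{L:3}) that for $(v_0,v_1)\in(C_0^\infty)^2$ the Kelvin/conformal transform yields $r^{\frac{N-1}{2}}v(t,r\omega)=F(r-t,\omega,1/r)$ with $F\in C^\infty(\RR\times S^{N-1}\times[0,+\infty))$, and takes $g=F(\cdot,\cdot,0)$; this packages the entire asymptotic expansion in the single smooth function $F$ and completely avoids oscillatory-integral estimates, dimension-parity issues, and the bookkeeping of the stationary-phase constant. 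The two arguments need the same auxiliary input to control the interior: the paper proves Claim~\ref{Cl:5} (decay of the Hardy term and of the energy on $\{|x|\le t-R\}$) by a virial identity, while you invoke the pointwise dispersive bound for frequency-localized data; for your class $\mathcal D$ that is indeed enough, though you would still need an analogue of \eqref{Morawetz} to control $\partial_{r,t}\mathcal E$ on the slab $1\lesssim r\lesssim t$ unless you first establish the $\langle r-t\rangle^{-M}$ and $\langle r+t\rangle^{-M}$ decay of the stationary-phase remainder (you mention this but do not carry it out). The density step, the uniqueness of $\overline g$, and the closed-range argument for surjectivity are identical in spirit to the paper (and to its Appendix~\ref{S:funct}); your explicit Fourier inversion to see the range is dense replaces the paper's backward-Duhamel construction of $\epsilon$ in \eqref{A16}, and both are complete once the candidate inverse is shown to produce genuine $\hdot\times L^2$ data. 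Two small remarks: the term ``$O(r^{-1}g(r-t,\omega))$'' in your expansion of $\partial_{r,t}(r^{\frac{N-1}{2}}v-g)$ does not arise if $g$, $I$, $\mathcal E$ are defined as the exact decomposition of $r^{\frac{N-1}{2}}v$; and the cancellation of the cross terms in the Plancherel computation (because the $\eta$-frequency support of $\int_0^\infty\rho^{\ast}a(\rho\omega)e^{i\rho\eta}\,d\rho$ lies in $\{\rho>0\}$) is exactly the right mechanism and must be spelled out, since it is where the factor~$\frac12$ relating $\int|\re H|^2$ to $\int|H|^2$ appears. Overall: a valid alternative proof whose hardest step you correctly identify; the paper's conformal-transform route is shorter because it replaces the whole stationary-phase analysis by the smoothness of $F$ at $\sigma=0$.
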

In this Appendix \ref{S:radiation} we prove Theorem \ref{T:B2'} assuming Proposition \ref{P:1}. We postpone the proof of Proposition \ref{P:1} to Appendix \ref{S:funct}.
\subsection{The case of smooth, compactly supported functions}
\begin{lemma}
 \label{L:3} 
 Assume $(v_0,v_1)\in \left(C_0^{\infty}(\RR^N)\right)^2$ and let $v$ be the corresponding solution of \eqref{E3}. Then there exists $F\in C^{\infty}(\RR\times S^{N-1}\times [0,+\infty))$ such that 
 $$ \forall r>0,\; \forall \omega \in S^{N-1},\quad v(t,r\omega)=\frac{1}{r^{\frac{N-1}{2}}}F\left(r-t,\omega,\frac 1r\right).$$
\end{lemma}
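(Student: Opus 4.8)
The plan is to use the classical explicit representations of solutions of the free wave equation, treating $N$ odd and $N$ even separately. For $N=2k+1$ odd I would start from the spherical means formula: with $M_f(x,t)\egaldef\int_{S^{N-1}}f(x+t\theta)\,d\theta$ and a dimensional constant $a_N$,
\[
v(t,x)=a_N\,\partial_t\!\Big[\big(\tfrac1t\partial_t\big)^{k-1}\!\big(t^{N-2}M_{v_0}(x,t)\big)\Big]+a_N\big(\tfrac1t\partial_t\big)^{k-1}\!\big(t^{N-2}M_{v_1}(x,t)\big),\qquad t>0.
\]
Everything then reduces to understanding $M_f(r\omega,r-\eta)$ for $f\in\Cio(\RR^N)$ supported in $\{|y|\le R\}$, in the regime $r\to+\infty$ with $\eta$ bounded --- the only regime in which the sphere of integration meets $\{|y|\le R\}$.

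The core computation I would carry out is a change of variables in $M_f$. For $r$ large the support constraint forces $\theta$ into a small cap around the antipode $-\omega$; I would parametrize that cap by $\theta=\theta(y)$, $y\in\RR^{N-1}$ (in a smooth \emph{local} orthonormal frame on $S^{N-1}$, so the argument works near any fixed $\omega_0$ and then patches, smoothness being local), substitute $y=\rho w$ with $\rho\egaldef 1/r$, and check that $r^{N-1}M_f(r\omega,r-\eta)$ becomes an integral over $w$ in a fixed compact set whose integrand depends smoothly on $(\eta,\omega,\rho)$ all the way up to $\rho=0$: the rescaled argument of $f$ has the form $\eta\omega+(\text{tangential part})+\rho\cdot(\text{smooth})$ and the Jacobian weight is smooth in $\rho^2|w|^2$. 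Differentiating under the integral sign then gives $G_f(\eta,\omega,\rho)\in C^\infty(\RR\times S^{N-1}\times(-\rho_0,\rho_0))$, equal at $\rho=0$ to the integral of $f$ over the hyperplane $\{x\cdot\omega=\eta\}$. To finish, I would feed $G_f$ back into the formula and track powers of $\rho$: at fixed $(r,\omega)$ one has $\partial_t=-\partial_\eta$, $\tfrac1t\partial_t=\tfrac{-\rho}{1-\rho\eta}\partial_\eta$, and $t^{N-2}M_f(r\omega,t)\big|_{t=r-\eta}=\rho(1-\rho\eta)^{N-2}G_f$; an immediate induction gives $(\tfrac1t\partial_t)^m[t^{N-2}M_f]\big|_{t=r-\eta}=\rho^{1+m}(1-\rho\eta)^{N-2-2m}H_m$ with $H_m$ smooth, since each $\tfrac1t\partial_t$ raises the $\rho$-exponent by one, lowers the $(1-\rho\eta)$-exponent by two, and produces a new smooth factor via the product rule. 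At $m=k-1=\tfrac{N-3}2$ the $\rho$-exponent is exactly $\tfrac{N-1}2$, and the outer $\partial_t$ preserves it. Hence $v(t,r\omega)=r^{-(N-1)/2}Q(r-t,\omega,1/r)$ with $Q$ smooth near $1/r=0$; combined with the obvious smoothness of $r^{(N-1)/2}v(t,r\omega)$ for $r\in(0,+\infty)$, this yields the required $F\in C^\infty(\RR\times S^{N-1}\times[0,+\infty))$.

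For $N=2k$ even I would run the same argument starting from Hadamard's descent formula, in which $M_f$ is replaced by the solid-ball average $\Psi_f(x,t)=\int_{|z|<1}f(x+tz)(1-|z|^2)^{-1/2}\,dz$. The rescaling near $z=-\omega$ turns the weight $(1-|z|^2)^{-1/2}$ into $\rho^{-1/2}\big(\tau(2-\rho\tau)-\rho|w|^2\big)^{-1/2}$, so the rescaled integral is of Abel type, with an integrable $\tau^{-1/2}$-singularity and a $\rho$-dependent domain. This is the one genuinely delicate point, and the step I expect to be the main obstacle. The resolution is that $\Psi_f$ is, up to a constant, the Abel transform of $\rho\mapsto\rho^{N-1}M_f(x,\rho)$, and for even $N$ the operator $\partial_t(\tfrac1t\partial_t)^{k-1}$ applied to $t^{N-1}\Psi_f$ is precisely the half-order derivative inverting that transform up to a smooth factor; carrying this out (equivalently, desingularizing the $\tau$-integral by a substitution and only then differentiating) restores smoothness, with the same bookkeeping of powers of $1/r$ as above. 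As an alternative to the direct even-dimensional computation, one may reduce to the odd case by Hadamard descent from dimension $N+1$, applying the odd case to $(v_0,v_1)$ cut off in the auxiliary variable at scale comparable to $r$ and invoking finite speed of propagation, at the price of tracking the $r$-dependent cutoff constant. Finally, in all cases $F|_{s=0}$ equals, up to a constant and $(N-1)/2$ derivatives in $\eta$ (an honest derivative for $N$ odd, a half-derivative for $N$ even), the Radon transform of $(v_0,v_1)$; this identification is what later feeds the energy and isometry statements \eqref{A1}--\eqref{A2} of Theorem~\ref{T:B2'}.
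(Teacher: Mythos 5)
Your argument is correct in outline and takes a genuinely different route from the paper's. You go through the classical explicit representation formulas (the Kirchhoff-type spherical-means formula for odd $N$ and the Hadamard solid-ball average for even $N$), rescale the sphere of integration near the antipodal cap and track powers of $\rho=1/r$ through the iterated $\tfrac1t\partial_t$; the paper instead uses the conformal (Kelvin) transformation: it produces, for each $t_0$, an auxiliary smooth solution $w$ of the free wave equation with Kelvin-inverted initial data at $t=t_0$, expresses $v$ through $w$ via the conformal identity \eqref{A5}, and reads off the smooth extension of $F$ directly from \eqref{A6}. The paper even flags your route as the classical one and explains why it prefers the conformal one: "We give a proof relying on the conformal transformation which is independent of the dimension." That is exactly the trade-off. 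The conformal argument is short, uniform in $N$, and needs no even/odd case split; your argument is more computational, requires the genuinely delicate even-dimensional desingularization of the $(1-|z|^2)^{-1/2}$ weight (or descent from $N+1$ with an $r$-dependent cutoff), but gives more in return, namely the explicit identification of $F(\cdot,\cdot,0)$ with a fractional $\eta$-derivative of the Radon transform of the data, which the paper's conformal proof does not exhibit. Your odd-dimensional bookkeeping ($\rho$-exponent $1+m$, $(1-\rho\eta)$-exponent $N-2-2m$, landing on $\rho^{(N-1)/2}$ at $m=\tfrac{N-3}2$) checks out; in the even case you have correctly identified the Abel-type singularity as the obstacle and the two standard ways around it, but you should be aware you have only sketched that part, whereas the paper's proof closes it at no extra cost.
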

\begin{proof}
 This is classical  (see \cite{Friedlander62}, \cite{LaPh89}, and also \cite{Alinhac09Bo} for this exact statement), and can be proved using the explicit form of the solution of \eqref{E3}, distinguishing between even and odd dimensions. We give a proof relying on the conformal transformation which is independent of the dimension.
 
 For $\rho\in \RR$, $\omega\in S^{N-1}$, $\sigma\in (0,+\infty)$ we let
 $$ F(\rho,\omega,\sigma)=\frac{1}{\sigma^{\frac{N-1}{2}}}v\left(\frac 1\sigma-\rho,\frac{\omega}{\sigma}\right).$$
 Since $v$ is smooth, $F$ is smooth on $\RR\times S^{N-1}\times (0,+\infty)$. We must prove that $F$ can be extended to a smooth function on $\RR\times S^{N-1}\times [0,+\infty)$.
 
 Let $t_0>0$. We claim that there exists a $C^{\infty}$ solution $w$ of the linear wave equation \eqref{E3}, depending on $t_0$ such that, for all $x,t$ such that $|x|>t-t_0$ and $t>t_0$,
 \begin{equation}
 \label{A5}
 v(t,x)=\frac{1}{\left(|x|^2-(t-t_0)^2\right)^{\frac{N-1}{2}}} w\left(\frac{t-t_0}{|x|^2-(t-t_0)^2},\frac{x}{|x|^2-(t-t_0)^2}\right).  
 \end{equation} 
Indeed, let $w$ be the solution of \eqref{E3} with $C^{\infty}$ initial data $(w_0,w_1)$ at $t=0$ given by
\begin{equation*}
 \left.
 \begin{aligned}
 w_0(y)&=\frac{1}{|y|^{N-1}} v\left(t_0,\frac{y}{|y|^2}\right)\\
w_1(y)&=\frac{1}{|y|^{N+1}} \partial_tv\left(t_0,\frac{y}{|y|^2}\right)
\end{aligned}
\right\} \text{ if }y\neq 0
 \end{equation*} 
 and $w_0(0)=w_1(0)=0$. Notice that since, by finite speed of propagation, $(v(t_0),\partial_t v(t_0))$ is compactly supported, the above definition yields $C^{\infty}$ functions on $\RR^N$ which are constant, equal to $0$, in a neighbourhood of the origin.

We note that
$$\tilde{w}(\tau,y)=\frac{1}{\left( |y|^2-\tau^2 \right)^{\frac{N-1}{2}}}v\left( t_0+\frac{\tau}{|y|^2-\tau^2},\frac{y}{|y|^2-\tau^2} \right)$$
defines, for $|y|>\tau$, a $C^{\infty}$ solution of the linear wave equation whose initial data at $\tau=0$ equals to $(w_0,w_1)$ (at least for $|y|\neq 0$). Hence, by finite speed of propagation,
$$|y|>\tau\Longrightarrow w(\tau,y)=\tilde{w}(\tau,y).$$
It remains to check that for $|x|>t-t_0$, $t>t_0$,
$$v(t,x)=\frac{1}{\left( |x|^2-|t-t_0|^2 \right)^{\frac{N-1}{2}}}\tilde{w}\left( \frac{t-t_0}{|x|^2-(t-t_0)^2},\frac{x}{|x|^2-(t-t_0)^2}\right).$$
This follows easily from the definition of $\tilde{w}$ and the change of variables
$$t=t_0+\frac{\tau}{|y|^2-\tau^2},\quad x=\frac{y}{|y|^2-\tau^2}.$$
As a consequence of \eqref{A5}, going back to the definition of $F$, we obtain
\begin{multline}
 \label{A6}
 \Big( \sigma>0,\;0<1-\sigma(\rho+t_0) \text{ and }\rho>-t_0 \Big)\Longrightarrow\\
 F(\rho,\omega,\sigma)=\frac{1}{\left( 2(\rho+t_0)-(\rho+t_0)^2\sigma \right)^{\frac{N-1}{2}}}w\left( \frac{1-\sigma(\rho+t_0)}{2(\rho+t_0)-(\rho+t_0)^2\sigma},\frac{\omega}{2(\rho+t_0)-(\rho+t_0)^2\sigma} \right)
\end{multline} 
However, the right-hand side of the second line of \eqref{A6} can be extended to a $C^{\infty}$ function in the open set
$$ \Big\{(\rho,\omega,\sigma)\in \RR\times S^{N-1}\times \RR\;:\; \sigma(\rho+t_0)<2 \text{ and }\rho>-t_0 \Big\}$$
which includes the set 
$(-t_0,+\infty)\times S^{N-1}\times \{0\}.$
As a consequence, $F$ can be extended to a $C^{\infty}$ function in a neighbourhood of the set 
$(1-t_0,+\infty)\times S^{N-1}\times [0,+\infty)$ and, since $t_0$ is arbitrarily large, to a neighbourhood of $\RR\times S^{N-1}\times [0,+\infty)$.
\end{proof}
\begin{lemma}
 \label{L:4}
 Let $N\geq 3$. Let $(v_0,v_1)\in \left(C_0^{\infty}(\RR^N)\right)^2$. Then
 \begin{equation}
  \label{A7}
  \lim_{t\to\infty}\left\|\frac{1}{r}\nabla_{\omega} v(t)\right\|_{L^2}+\left\|\frac{1}{r}v(t)\right\|_{L^2}=0
 \end{equation} 
 and there exists $g\in \dot{H}^1_{\eta}$ such that 
 \begin{equation}
  \label{A8}
  \lim_{t\to\infty}\left\|\partial_{r,t}\left( r^{\frac{N-1}{2}}v(t,r\omega)-g(r-t,\omega)\right)\right\|_{L^2((0,\infty)\times S^{N-1})}=0.
 \end{equation} 
\end{lemma}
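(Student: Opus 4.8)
The plan is to reduce everything to Lemma~\ref{L:3}, which supplies $F\in C^\infty(\RR\times S^{N-1}\times[0,+\infty))$ with
\[
r^{\frac{N-1}{2}}v(t,r\omega)=F(r-t,\omega,1/r),\qquad r>0,\ \omega\in S^{N-1},
\]
and to set $g(\eta,\omega):=F(\eta,\omega,0)$, a $C^\infty$ function on $\RR\times S^{N-1}$. Writing $\partial_\rho F$, $\partial_\sigma F$ and $\nabla_\omega F$ for the partial derivatives of $F$ in its first, third and (spherical) middle variable, differentiating $r^{\frac{N-1}{2}}v(t,r\omega)=F(r-t,\omega,1/r)$ and $g(r-t,\omega)=F(r-t,\omega,0)$ gives
\begin{align*}
\partial_t\big(r^{\frac{N-1}{2}}v(t,r\omega)-g(r-t,\omega)\big)&=-\big(\partial_\rho F(r-t,\omega,1/r)-\partial_\rho F(r-t,\omega,0)\big),\\
\partial_r\big(r^{\frac{N-1}{2}}v(t,r\omega)-g(r-t,\omega)\big)&=\partial_\rho F(r-t,\omega,1/r)-\partial_\rho F(r-t,\omega,0)-\tfrac{1}{r^2}\,\partial_\sigma F(r-t,\omega,1/r),
\end{align*}
while passing to radial coordinates yields $\big\|\tfrac1r v(t)\big\|_{L^2}^2=\int_0^\infty\!\int_{S^{N-1}}\tfrac1{r^2}|F(r-t,\omega,1/r)|^2\,d\omega\,dr$, and the same with $\nabla_\omega F$ in place of $F$ for $\big\|\tfrac1r\nabla_\omega v(t)\big\|_{L^2}^2$. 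Fixing $R_0$ with $\supp v_0\cup\supp v_1\subset\{|x|\le R_0\}$, finite speed of propagation gives $v(t,r\omega)=0$ for $r>t+R_0$, and, letting $\sigma\to0^+$ in $F(\eta,\omega,\sigma)=\sigma^{-\frac{N-1}{2}}v(1/\sigma-\eta,\omega/\sigma)$, also $g(\eta,\omega)=0$ for $\eta>R_0$; so all the integrands above vanish for $r>t+R_0$.

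Next I would check $g\in\dot{H}^1_\eta$: since $\partial_\rho F(r-t,\omega,1/r)=-r^{\frac{N-1}{2}}\partial_t v(t,r\omega)$, one has $\int_0^\infty\!\int_{S^{N-1}}|\partial_\rho F(r-t,\omega,1/r)|^2=\|\partial_t v(t)\|_{L^2}^2\le\|\nabla v_0\|_{L^2}^2+\|v_1\|_{L^2}^2$ uniformly in $t$; after the substitution $\eta=r-t$, Fatou's lemma (using that $\partial_\rho F(\eta,\omega,\tfrac1{\eta+t})\to\partial_\rho F(\eta,\omega,0)$ pointwise as $t\to+\infty$) shows $\partial_\eta g=\partial_\rho F(\cdot,\cdot,0)\in L^2(\RR\times S^{N-1})$. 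To obtain \eqref{A7} and \eqref{A8}, fix $T>R_0$ and split the $r$-integral at $r=t-T$. On the band $r\in(t-T,t+R_0)$ we have $|r-t|\le T$ and $0<1/r\le1/(t-T)\to0$, so by uniform continuity of $\partial_\rho F$ and boundedness of $F$, $\partial_\sigma F$, $\nabla_\omega F$ on $[-T,R_0]\times S^{N-1}\times[0,1]$ the integrands tend to $0$ uniformly in $(r,\omega)$ while the $r$-interval keeps fixed length $T+R_0$; this band contributes $o(1)$ as $t\to+\infty$. On $r\in(0,t-T)$, in \eqref{A8} one bounds the contribution (using the formulas above together with $\partial_r v+\tfrac{N-1}{2|x|}v=|x|^{-\frac{N-1}{2}}\partial_r(|x|^{\frac{N-1}{2}}v)$) by
\[
C\!\!\int_{|x|<t-T}\!\!\Big(|\nabla_{t,x}v(t,x)|^2+\frac{|v(t,x)|^2}{|x|^2}\Big)dx+2\!\!\int_{-\infty}^{-T}\!\!\int_{S^{N-1}}|\partial_\eta g|^2,
\]
whose second term is small for $T$ large since $\partial_\eta g\in L^2$; in \eqref{A7} one is directly left with $\int_{|x|<t-T}|x|^{-2}(|v|^2+|\nabla_\omega v|^2)\,dx$. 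Thus everything reduces to
\[
\lim_{T\to+\infty}\ \limsup_{t\to+\infty}\ \int_{|x|<t-T}\Big(|\nabla_{t,x}v(t,x)|^2+\frac{|v(t,x)|^2}{|x|^2}\Big)\,dx=0.
\]

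The last display is the only genuinely analytic point. If $N$ is odd it is immediate from the strong Huygens principle: the integral vanishes once $t>T+R_0$. If $N$ is even (where, in contrast to Lemma~\ref{L:3}, the conformal device does \emph{not} dispose of the parity) I would deduce it from the classical pointwise decay bounds for free waves with smooth, compactly supported data, of the form $|v(t,x)|\lesssim(t-|x|)^{-1/2}(1+|x|)^{-\frac{N-2}{2}}$ and $|\nabla_{t,x}v(t,x)|\lesssim(t-|x|)^{-3/2}(1+|x|)^{-\frac{N-2}{2}}$ on $\{R_0\le|x|\le t-R_0\}$ (coming from the explicit method-of-descent representation of $v$): inserting these, an elementary integration shows that the integral over $\{R_0\le|x|<t-T\}$ is $O(1/T)$ uniformly in $t$, while the part over $\{|x|\le R_0\}$ is controlled via Hardy's inequality on a ball by $\|\nabla v(t)\|_{L^2(|x|\le R_0)}^2+\|v(t)\|_{L^2(|x|\le R_0)}^2$, which tends to $0$ as $t\to+\infty$ by dispersion. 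I expect this even-dimensional interior decay estimate to be the main obstacle; the rest is a mechanical consequence of Lemma~\ref{L:3}. (A softer route would combine $g\in\dot{H}^1_\eta$ with a dominated-convergence argument based on equipartition of kinetic and potential energy, but that needs $\|\partial_\eta g\|_{L^2}^2=E_{\lin}(v_0,v_1)$, which is essentially \eqref{A2} and not yet available at this stage.)
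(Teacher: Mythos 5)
Your reduction — namely, passing to $F$ and $g=F(\cdot,\cdot,0)$ via Lemma~\ref{L:3}, checking $g\in\dot H^1_\eta$ by the energy bound, handling the band $\{t-T<r<t+R_0\}$ by uniform continuity of $F$ and its derivatives, and reducing everything to
\[
\lim_{T\to+\infty}\limsup_{t\to+\infty}\int_{|x|<t-T}\Big(|\nabla_{t,x}v|^2+\tfrac{|v|^2}{|x|^2}\Big)\,dx=0
\]
— is exactly the paper's Steps~1--3. The paper isolates precisely that interior decay statement as Claim~\ref{Cl:5}. So the skeleton is right and you correctly identified the one genuinely analytic point.

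The gap is in your proposed proof of that interior decay in even dimensions. First, the pointwise bounds you quote are not the right ones: for smooth compactly supported data the interior decay in even $N$ is of the schematic form $|v|\lesssim (t+|x|)^{-\frac{N-1}{2}}(t-|x|)^{-\frac{N-1}{2}}$, whereas your factor $(1+|x|)^{-\frac{N-2}{2}}$ is $O(1)$ on $\{|x|\lesssim 1\}$ and loses the essential $t^{-\frac{N-1}{2}}$ decay deep inside the cone. Second, even if one grants your bound, the ``elementary integration'' does not yield $O(1/T)$ uniformly in $t$: with $|\nabla_{t,x}v|^2\lesssim(t-r)^{-3}(1+r)^{-(N-2)}$ one gets
\[
\int_{R_0}^{t-T}(t-r)^{-3}(1+r)^{-(N-2)}r^{N-1}\,dr \;\approx\; \int_{R_0}^{t-T}(t-r)^{-3}\,r\,dr \;\approx\; \frac{t}{T^2},
\]
which \emph{diverges} as $t\to\infty$ for fixed $T$. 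The same problem (and worse, a logarithm) arises for $\int r^{-2}|v|^2$. So the argument, as written, breaks down. (With the correct $(t+|x|)^{-\frac{N-1}{2}}$ factors the integral would indeed tend to $0$, but then the whole weight of the proof sits in a pointwise decay estimate that you would still have to justify.)

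The paper instead proves Claim~\ref{Cl:5} with no pointwise estimates at all and uniformly in all $N\ge3$: integrating the virial/Morawetz identity $\frac{d}{dt}\big(\int x\cdot\nabla v\,\partial_tv+\tfrac{N-1}{2}\int v\partial_tv\big)=-E_{\lin}$, using finite speed of propagation and the conservation of $\|v\|_{L^2}$ (which needs only $N\ge3$ so that $v_1\in\dot H^{-1}$), gives directly
\[
\frac R2\int_{|x|\le t-R}|\nabla_{t,x}v|^2\le C,
\]
and then the Hardy term is killed by a cutoff at scale $\sqrt t$ combined with the $L^2$ bound on $v$ and the Morawetz estimate. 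This is strictly softer and sidesteps your even-$N$ obstacle entirely; if you want a clean proof, you should replace the pointwise-decay step by this virial computation (or by Lemma~2.1 of \cite{DuJiKeMe16P}, which is the same thing).
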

Let us mention that Lemma \ref{L:4} follows quite easily from Lemma \ref{L:3} if $N\geq 3$ is odd (in this case, $g\in C_0^{\infty}(\RR\times S^{N-1})$). In the general case, we will need the following claim:
\begin{claim}
\label{Cl:5}
 Let $(v_0,v_1)\in \left( C_0^{\infty}(\RR^N) \right)^2$, $N\geq 3$ and $v$ the solution to the wave equation \eqref{E3}. Then $\|v(t)\|_{L^2}$ is bounded,
 \begin{equation}
 \label{Hardy}
 \lim_{t\to\infty} \int \frac{1}{|x|^2}|v(t,x)|^2\,dx=0
 \end{equation}
 and
 \begin{equation}
 \label{Morawetz}
 \lim_{R\to\infty}\limsup_{t\to\infty}\int_{|x|\leq t-R} |\nabla v(t,x)|^2+(\partial_tv(t,x))^2\,dx=0.  
 \end{equation} 
\end{claim}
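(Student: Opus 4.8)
The plan is to prove the three assertions in the logical order \emph{(i)} boundedness of $\|v(t)\|_{L^2}$, \emph{(ii)} the interior energy decay \eqref{Morawetz}, \emph{(iii)} the Hardy-type limit \eqref{Hardy}; the last two will be deduced from the first. For \emph{(i)}, pass to the Fourier side: $\widehat v(t,\xi)=\cos(t|\xi|)\widehat{v_0}(\xi)+\frac{\sin(t|\xi|)}{|\xi|}\widehat{v_1}(\xi)$, so by Plancherel $\|v(t)\|_{L^2}\le\|v_0\|_{L^2}+\big\||\xi|^{-1}\widehat{v_1}\big\|_{L^2}=\|v_0\|_{L^2}+\|v_1\|_{\dot{H}^{-1}}$, uniformly in $t$. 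The only point to verify is $C_0^\infty(\RR^N)\subset\dot{H}^{-1}(\RR^N)$, i.e. $\int_{|\xi|\le1}|\xi|^{-2}|\widehat{v_1}(\xi)|^2\,d\xi<\infty$, which holds precisely because $N\ge3$.

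For \eqref{Morawetz} I would combine two classical ingredients. First, the \emph{exterior energy} $E_{\mathrm{ext}}(t;R):=\int_{|x|\ge t-R}\big(|\partial_tv|^2+|\nabla v|^2\big)(t,x)\,dx$ is nonincreasing in $t$: the energy--momentum flux identity over $\{t_1\le s\le t_2,\ |x|\ge s-R\}$ has a lateral-boundary term equal to a nonnegative multiple of $\int_{\{|x|=s-R\}}\big((\partial_tv-\partial_rv)^2+\big|\tfrac1r\nabla_\omega v\big|^2\big)$. Since $\int_{\RR^N}\big(|\partial_tv|^2+|\nabla v|^2\big)$ is conserved, the \emph{interior energy} $E_{\mathrm{int}}(t;R):=\int_{|x|<t-R}\big(|\partial_tv|^2+|\nabla v|^2\big)(t,x)\,dx$ is nondecreasing in $t$, so $\ell(R):=\lim_{t\to+\infty}E_{\mathrm{int}}(t;R)$ exists. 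Second, the conformal (Morawetz) conservation law for $\partial_t^2v-\Delta v=0$: the nonnegative quantity
$$Z(t):=\int_{\RR^N}\left[\frac{(t+r)^2}{2}(\partial_tv+\partial_rv)^2+\frac{(t-r)^2}{2}(\partial_tv-\partial_rv)^2+(t^2+r^2)\Big|\frac1r\nabla_\omega v\Big|^2\right]dx$$
equals, after integration over $[0,T]$, $C_0T$ plus terms controlled by $\sup_s\|v(s)\|_{L^2}^2$; using \emph{(i)} this gives $\frac1T\int_0^TZ(t)\,dt\le C$, hence a sequence $t_k\to+\infty$ with $Z(t_k)\le C$. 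On $\{|x|\le t_k-R\}$ one has $(t_k+r)^2\ge t_k^2$, $(t_k-r)^2\ge R^2$, $\frac{t_k^2+r^2}{r^2}\ge\frac{t_k^2}{r^2}$, and $|\partial_tv|^2+|\nabla v|^2=\tfrac12(\partial_tv+\partial_rv)^2+\tfrac12(\partial_tv-\partial_rv)^2+\big|\tfrac1r\nabla_\omega v\big|^2$, whence $E_{\mathrm{int}}(t_k;R)\le 2Z(t_k)\big(t_k^{-2}+R^{-2}\big)$. Letting $k\to+\infty$ and using that $\ell(R)$ exists gives $\ell(R)\le 2C/R^2\to0$ as $R\to+\infty$, which is \eqref{Morawetz}. (In odd dimensions this is trivial from strong Huygens, so only the case $N=4$ genuinely needs the argument.)

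For \eqref{Hardy}, split $\int|v(t,x)|^2|x|^{-2}\,dx$ over $\{|x|>t/2\}$ and $\{|x|\le t/2\}$. The first piece is $\le 4t^{-2}\|v(t)\|_{L^2}^2\to0$ by \emph{(i)}. For the second, with $t>2R$ choose $\chi$ equal to $1$ on $\{|x|\le t/2\}$, supported in $\{|x|\le t-R\}$, with $|\nabla\chi|\lesssim t^{-1}$; Hardy's inequality (valid since $N\ge3$) gives $\int_{|x|\le t/2}\frac{|v|^2}{|x|^2}\le\int\frac{|\chi v|^2}{|x|^2}\lesssim\int|\nabla(\chi v)|^2\lesssim\int_{|x|\le t-R}|\nabla v(t)|^2+t^{-2}\|v(t)\|_{L^2}^2$. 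Taking $\limsup_{t\to+\infty}$ and then $R\to+\infty$ the right-hand side tends to $0$ by \eqref{Morawetz} and \emph{(i)}, proving \eqref{Hardy}.

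I expect the conformal-energy step to be the main obstacle: the identity only bounds the \emph{time average} of $Z$, not $Z(t)$ pointwise, so the bound $Z(t_k)\le C$ is available only along a sequence --- which is exactly why the estimate must be routed back through the monotonicity of $E_{\mathrm{int}}(t;R)$ to recover the full limit required by \eqref{Morawetz}. Tracking the lower-order $\|v(t)\|_{L^2}^2$ contributions in the conformal identity (the reason \emph{(i)} has to be established first) is the other point requiring some care; the rest is routine.
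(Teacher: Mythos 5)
Your proof is correct, and reaches the same conclusion as the paper, but the key step \eqref{Morawetz} is handled by a genuinely different argument. The paper uses a single virial identity with the multiplier $x\cdot\nabla+\tfrac{N-1}{2}$,
\begin{equation*}
\frac{d}{dt}\Big(\int x\cdot\nabla v\,\partial_tv+\frac{N-1}{2}\int v\,\partial_tv\Big)=-E_{\lin}(v_0,v_1),
\end{equation*}
integrates in $t$, and uses the $L^2$-bound from (i) to isolate $|\int x\cdot\nabla v\,\partial_tv|\geq tE_{\lin}-C$; this is pitted against the Cauchy--Schwarz/finite-speed bound $|\int x\cdot\nabla v\,\partial_tv|\leq(t+M)E_{\lin}-\tfrac R2\int_{|x|\leq t-R}|\nabla_{t,x}v|^2$, the leading $tE_{\lin}$ cancels, and one gets $\int_{|x|\leq t-R}|\nabla_{t,x}v|^2\lesssim 1/R$ \emph{uniformly in $t$}. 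Your route instead combines exterior-energy monotonicity (so $\ell(R)=\lim_{t\to\infty}E_{\mathrm{int}}(t;R)$ exists) with the conformal conservation law to produce a good sequence $t_k$, and then upgrades the sequential bound via the monotonicity; this works, and your time-averaged inequality $\frac1T\int_0^TZ(t)\,dt\leq C$ is correct (the $t\int v\partial_tv$ correction to the conserved conformal charge, which grows like $t$ pointwise and would kill a pointwise bound on $Z$, becomes $O(T\sup_s\|v(s)\|_{L^2}^2)$ after integration by parts in $t$ --- note it is $O(T)$, not $O(1)$ as your phrasing suggests, but that is harmless after dividing by $T$). The paper's approach is shorter and more self-contained: one multiplier identity, Cauchy--Schwarz, finite speed of propagation, no sequence extraction, and a pointwise-in-$t$ estimate; yours shows cleanly where the decay comes from (the $(t-r)^2$ weight inside the cone) at the cost of two classical ingredients and extra bookkeeping. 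Your proofs of (i) and (iii) are essentially identical to the paper's: (i) is conservation of the $L^2\times\dot H^{-1}$ norm (which your Plancherel computation encodes directly, and which uses $N\geq 3$), and (iii) is Hardy's inequality with a cutoff supported inside the region covered by \eqref{Morawetz} (the paper cuts off at scale $\sqrt t$ rather than $t/2$, same effect). One small typo: the positive lateral flux through $\{|x|=s-R\}$ involves $(\partial_tv+\partial_rv)^2$, not $(\partial_tv-\partial_rv)^2$; nonnegativity, which is all you use, is unaffected.
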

This is classical. We postpone the proof after the proof of Lemma \ref{L:4} (see also Lemma 2.1 in \cite{DuJiKeMe16P}). 
\begin{proof}[Proof of Lemma \ref{L:4}]
 We let $F$ be as in Lemma \ref{L:3} and $g(\eta,\omega)=F(\eta,\omega,0)$. Note that $g\in C^{\infty}(\RR\times S^{N-1})$.

 \EMPH{Step 1} We prove that $g\in \dot{H}^1_{\eta}$, i.e.
 \begin{equation}
  \label{A9}
  \int_{\RR\times S^{N-1}} |\partial_{\eta}g|^2<\infty.
 \end{equation} 
 We have $F\left(r-t,\omega,\frac 1r\right)=r^{\frac{N-1}{2}}v(t,r\omega)$. As a consequence, $F(\eta,\omega,\sigma)=0$ if $\eta\geq M$ by finite speed of propagation, where $M>0$ is such that 
 \begin{equation}
  \label{A10}
  \supp (v_0,v_1)\subset\left\{x\in \RR^N\;:\; |x|\leq M\right\}.
 \end{equation} 
 Let $A>0$. We have 
 $$\int_{S^{N-1}}\int_{t-A}^{t+M} \left|\partial_{\eta}F\left( r-t,\omega,\frac{1}{r} \right) \right|^2\,drd\omega=\int_{S^{N-1}}\int_{t-A}^{t+M} r^{N-1}\left( \partial_tv(t,r\omega) \right)^2\,drd\omega\leq 2 E_{\lin}(v_0,v_1).$$
 Hence 
 $$\int_{S^{N-1}} \int_{-A}^M\left|\partial_{\eta}F\left( \eta,\omega,\frac{1}{\eta+t} \right)\right|^2\,d\eta d\omega\leq 2 E_{\lin}(v_0,v_1).$$
Letting $t\to+\infty$, we obtain
$$\int_{S^{N-1}} \int_{-A}^{+\infty} |\partial_\eta g(\eta,\omega)|^2\,d\eta d\omega=\int_{S^{N-1}} \int_{-A}^M \left|\partial_\eta F(\eta,\omega,0)\right|^2\,d\eta d\omega\leq 2 E_{\lin}(v_0,v_1).$$
Since $A$ can be taken arbitrarily large, we obtain \eqref{A9}.

\EMPH{Step 2} Proof of \eqref{A7}. By Claim \ref{Cl:5} and Step 1, it is sufficient to prove:
\begin{equation}
 \label{A:12}
 \forall R>0,\quad \lim_{t\to\infty} \int_{|x|\geq t-R} \frac{1}{|x|^2}|\nabla_{\omega}v|^2\,dx=0,
\end{equation} 
i.e. 
$$\forall R>0,\quad \lim_{t\to\infty} \int_{S^{N-1}}\int_{t-R}^{t+M} \frac{1}{r^2}\left|\nabla_{\omega}F\left( r-t,\omega,\frac{1}{r} \right)\right|^2\,dr d\omega=0,$$
which follows easily from the fact that $F\in C^{\infty}\left( \RR\times S^{N-1}\times [0,+\infty)\right)$ and the change of variable $\eta=r-t$.

\EMPH{Step 3} Proof of \eqref{A8}. By Claim \ref{Cl:5} and Step 1, it is sufficient to prove:
\begin{multline}
 \label{A:13}
 \forall R>0,\quad \lim_{t\to+\infty}\int_{S^{N-1}} \int_{t-R}^{t+M} \left| \frac{\partial}{\partial r}\left( F\left(r-t,\omega,\frac{1}{r} \right)-F(r-t,\omega,0 )\right)\right|^2\,dr d\omega\\+\int_{S^{N-1}} \int_{t-R}^{t+M} \left| \frac{\partial}{\partial t}\left( F\left(r-t,\omega,\frac{1}{r} \right)-F(r-t,\omega,0 )\right)\right|^2\,dr d\omega=0.
\end{multline} 
As in Step 2, this follows from the change of variable $\eta=r-t$.
 \end{proof}
\begin{lemma}
 \label{L:6}
 Let $(v_0,v_1)$ and $g$ be as in Lemma \ref{L:4}. Then
 $$E_{\lin}(v_0,v_1)=\left\|\overline{g}\right\|^2_{\dot{H}^1_\eta}.$$
\end{lemma}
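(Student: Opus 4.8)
The plan is to compute $\lim_{t\to+\infty}E_{\lin}(\vec v(t))$ using the asymptotics recorded in Lemma~\ref{L:4} (together with Step~1 of its proof, which gives $\partial_\eta g\in L^2(\RR\times S^{N-1})$), and then to conclude from the conservation of the linear energy, $E_{\lin}(\vec v(t))=E_{\lin}(v_0,v_1)$.

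First I would reduce the energy to the radial profile. In polar coordinates $|\nabla v|^2=(\partial_rv)^2+\tfrac1{r^2}|\nabla_\omega v|^2$, so
\[
E_{\lin}(\vec v(t))=\frac12\int_0^{+\infty}\!\!\int_{S^{N-1}}\big[(\partial_rv)^2+(\partial_tv)^2\big]r^{N-1}\,d\omega\,dr+\frac12\Big\|\tfrac1r\nabla_\omega v(t)\Big\|_{L^2}^2,
\]
and the last term is $o(1)$ by \eqref{A7}. Setting $w(t,r,\omega)=r^{\frac{N-1}{2}}v(t,r\omega)$, one has $\partial_tw=r^{\frac{N-1}{2}}\partial_tv$ and $\partial_rw=r^{\frac{N-1}{2}}\partial_rv+\tfrac{N-1}{2r}w$. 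Since $\|\partial_tv(t)\|_{L^2}^2+\|\partial_rv(t)\|_{L^2}^2\le\|\partial_tv(t)\|_{L^2}^2+\|\nabla v(t)\|_{L^2}^2=2E_{\lin}(v_0,v_1)$, the $L^2((0,+\infty)\times S^{N-1})$ norms of $\partial_tw$ and of $r^{\frac{N-1}{2}}\partial_rv$ are bounded uniformly in $t$, while $\|\tfrac1r w\|_{L^2((0,+\infty)\times S^{N-1})}=\|\tfrac1r v(t)\|_{L^2}\to 0$ by \eqref{A7}; so a Cauchy--Schwarz argument lets the $\tfrac1r w$ correction in $\partial_rw$ be absorbed into the $o(1)$, and
\[
E_{\lin}(\vec v(t))=\frac12\int_0^{+\infty}\!\!\int_{S^{N-1}}\big[(\partial_rw)^2+(\partial_tw)^2\big]\,d\omega\,dr+o(1)\qquad(t\to+\infty).
\]

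Then I would bring in Lemma~\ref{L:4}. By \eqref{A8}, $\partial_rw-\partial_r\big(g(r-t,\omega)\big)$ and $\partial_tw-\partial_t\big(g(r-t,\omega)\big)$ tend to $0$ in $L^2((0,+\infty)\times S^{N-1})$, and since $\partial_r\big(g(r-t,\omega)\big)=(\partial_\eta g)(r-t,\omega)$ while $\partial_t\big(g(r-t,\omega)\big)=-(\partial_\eta g)(r-t,\omega)$, both $\partial_rw$ and $-\partial_tw$ approach $(\partial_\eta g)(\cdot-t,\cdot)$ in $L^2((0,+\infty)\times S^{N-1})$. By the change of variable $\eta=r-t$,
\[
\big\|(\partial_\eta g)(\cdot-t,\cdot)\big\|_{L^2((0,+\infty)\times S^{N-1})}^2=\int_{-t}^{+\infty}\!\!\int_{S^{N-1}}|\partial_\eta g(\eta,\omega)|^2\,d\omega\,d\eta\ \longrightarrow\ \|\overline{g}\|_{\dot{H}^1_{\eta}}^2
\]
as $t\to+\infty$, the convergence (and finiteness of the limit) being a consequence of $\partial_\eta g\in L^2(\RR\times S^{N-1})$. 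Because $L^2$-closeness together with the uniform bounds above forces convergence of the $L^2$ norms, $\|\partial_rw\|_{L^2}^2$ and $\|\partial_tw\|_{L^2}^2$ both tend to $\|\overline{g}\|_{\dot{H}^1_{\eta}}^2$, whence $E_{\lin}(\vec v(t))\to\|\overline{g}\|_{\dot{H}^1_{\eta}}^2$; combined with $E_{\lin}(\vec v(t))=E_{\lin}(v_0,v_1)$ this is the claim.

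I do not anticipate a serious obstacle: the argument is essentially bookkeeping of error terms, and the only two points requiring care are (i) the uniform-in-$t$ $L^2$ bounds on $\partial_rw$ and $\partial_tw$, needed both to absorb the $\tfrac1r w$ term in the reduction step and to pass from $L^2$-closeness to convergence of norms — these follow from $\|\partial_tv(t)\|_{L^2}^2+\|\nabla v(t)\|_{L^2}^2=2E_{\lin}(v_0,v_1)$ and \eqref{A7} — and (ii) the global square integrability of $\partial_\eta g$ (established in Step~1 of the proof of Lemma~\ref{L:4}), which is what makes $\int_{-t}^{+\infty}$ converge to the full integral over $\RR$.
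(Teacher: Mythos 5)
Your proposal is correct and follows essentially the same route as the paper: decompose the energy in polar coordinates, discard the angular and $\tfrac1r v$ terms by \eqref{A7} (with Cauchy--Schwarz absorbing the cross term), pass to $w=r^{\frac{N-1}{2}}v$, identify $\partial_r w$ and $-\partial_t w$ with the radiation field via \eqref{A8}, and conclude by conservation of energy. You merely spell out two steps — the Cauchy--Schwarz absorption and the passage from $L^2$-closeness to convergence of norms — that the paper leaves implicit.
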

\begin{proof}
\begin{multline*}
 E_{\lin}(v,\partial_tv)=\frac{1}{2}\int_0^{\infty} \int_{S^{N-1}} (\partial_rv)^2\,r^{N-1}d\omega dr\\
 +\frac{1}{2}\int_0^{\infty} \int_{S^{N-1}} \frac{1}{r^2}|\nabla_{\omega}v|^2r^{N-1}\,d\omega dr+\frac{1}{2}\int_0^{\infty} \int_{S^{N-1}} (\partial_tv)^2\,r^{N-1}d\omega dr. 
\end{multline*}

 Notice that 
 \begin{equation}
 \label{A12'}\int_0^{\infty} \int_{S^{N-1}} (\partial_rv)^2r^{N-1}\,d\omega dr=\int_0^{+\infty} \int_{S^{N-1}} \left( \partial_r (r^{\frac{N-1}{2}} v) \right)^2\,dr +o(1)\text{ as }t\to \infty.  
 \end{equation} 
Indeed, $\partial_r \left( r^{\frac{N-1}{2}}v \right)=r^{\frac{N-1}{2}}\partial_r v+\frac{N-1}{2} r^{\frac{N-3}{2}} v$,
and \eqref{A12'} follows in view of \eqref{A7}. Using \eqref{A7} again, we see that 
$$E_{\lin}(v,\partial_t v)=\frac{1}{2}\int_0^{\infty} \int_{S^{N-1}} \left(\partial_r (r^{\frac{N-1}{2}}v)\right)^2~\,d\omega dr+\frac{1}{2}\int_0^{\infty} \int_{S^{N-1}} (\partial_tv)^2\,r^{N-1}d\omega dr+o(1)\text{ as }t\to\infty.$$
By \eqref{A8}, 
\begin{multline*}
 E_{\lin}(v,\partial_t v)=\int_0^{\infty} \int_{S^{N-1}} \left|\partial_{\eta}g(r-t,\omega)\right|^2\,dr d\omega+o(1)
=\int_{-t}^{\infty} \int_{S^{N-1}} \left|\partial_{\eta}g(\eta,\omega)\right|^2\,d\eta d\omega+o(1)\\
\underset{t\to\infty}{\longrightarrow}\int_{-\infty}^{\infty} \int_{S^{N-1}} \left|\partial_{\eta}g(\eta,\omega)\right|^2\,d\eta d\omega,
\end{multline*}
and we conclude using the conservation of the energy. 
 \end{proof}
 
It remains to prove Claim \ref{Cl:5}.
We use the identity
$$ \frac{d}{dt} \left( \int x\cdot \nabla v \partial_t v+\frac{N-1}{2}\int v\partial_tv \right)=-E_{\lin}(v_0,v_1).$$
(In all the proof of the claim, $\int$ denotes the integral on $\RR^N$).
Integrating between $0$ en $t>0$, we obtain
\begin{equation}
 \label{A18} 
 \int x\cdot \nabla v \partial_t v+\frac{N-1}{2}\int v\partial_tv=-tE_{\lin}(v_0,v_1)+\int x\cdot \nabla v_0v_1+\frac{N-1}{2} \int v_0v_1.
\end{equation} 

Since $N\geq 3$, $\dot{H}^{-1}(\RR^N)$ is a Hilbert space, included in the space of tempered distributions, and $C_0^{\infty}(\RR^N)\subset \dot{H}^{-1}(\RR^N)$. By conservation of the $L^2\times \dot{H}^{-1}$ norm of $(v,\partial_tv)$, we deduce that the $L^2$ norm of $v$ remains bounded. Using \eqref{A18}, we deduce
\begin{equation}
 \label{A19} 
 \left| \int x\cdot \nabla v\partial_tv \right| \geq t E_{\lin}(v_0,v_1)-C.
\end{equation} 
For some constant $C>0$ depending on $(v_0,v_1)$.

Let $M>0$ such that $|x|\leq M$ on the support of $(v_0,v_1)$. By finite speed of propagation, $|x|\leq M+|t|$ on the support of $(v(t),\partial_tv(t))$. Let $R>0$. By Cauchy-Schwarz inequality,
\begin{multline*}
 \left| \int x\cdot \nabla v\partial_tv\right| \leq (t+M) \int_{|x|\geq t-R} |\nabla v\partial_tv|+(t-R)\int_{|x|\leq t-R} |\nabla v\partial_tv|\\
 \leq tE_{\lin}(v_0,v_1)+M\,E_{\lin}(v_0,v_1)-\frac R2 \int_{|x|\leq t-R} |\nabla_{t,x} v|^2.
\end{multline*}
Combining with \eqref{A19}, we obtain
\begin{equation}
 \label{A20}
 C+ME_{\lin}(v_0,v_1)\geq \frac{R}{2}\int_{|x|\leq t-R} |\nabla_{t,x} v|^2,
\end{equation} 
which yields as announced
\begin{equation}
 \label{A21} 
 \lim_{R\to \infty} \limsup_{t\to +\infty} \int_{|x|\leq t-R} |\nabla_{t,x}v|^2=0.
\end{equation} 
It remains to prove:
\begin{equation}
 \label{A22}
\lim_{t\to+\infty} \int \frac{1}{|x|^2}|v|^2=0.
\end{equation} 
Since 
$$ \sup_{t\geq 0} \int|v(t)|^2<\infty,$$
it is sufficient to prove
$$ \forall A>0,\quad \lim_{t\to +\infty} \int_{|x|\leq A} \frac{1}{|x|^2}|v|^2=0.$$
Let $\varphi\in C_0^{\infty}(\RR^N)$ such that $\varphi(x)=1$ if $|x|\leq 1$ and $\varphi(x)=0$ if $|x|\geq 2$. Then
$$ \int \frac{1}{|x|^2} \varphi\left( \frac{x}{\sqrt{t}} \right) |v|^2 \leq \int \left| \nabla\left( \varphi\left( \frac{x}{\sqrt{t}} \right)u \right)\right|^2\leq \frac{C}{t}\int |v|^2+2\int \left| \varphi\left( \frac{x}{\sqrt{t}} \right)\nabla v\right|^2.$$
The first term goes to $0$ as $t$ goes to infinity since the $L^2$ norm of $v$ is bounded. The second one goes to $0$ by \eqref{A21}. The proof is complete. \qed

\subsection{General case}
We prove here Theorem \ref{T:B2'}.
\subsubsection{Existence and uniqueness}
Let $(v_0,v_1)\in \dot{H}^1\times L^2$. We argue by density, considering a sequence $\left\{(v_{0,n},v_{1,n})\right\}_n$ in $\left( C_0^{\infty}(\RR^N) \right)^2$ such that 
\begin{equation}
 \label{A11}
 \lim_{n\to\infty} \left\|(v_{0,n}-v_0,v_{1,n}-v_1)\right\|_{\dot{H}^1\times L^2}=0.
\end{equation} 
Let $\overline{g}_n\in \dot{\HHH}^1_{\eta}$ be given by Lemma \ref{L:4}, corresponding to $(v_{0,n},v_{1,n})$. By the energy identity in Lemma \ref{L:6}, the sequence $\left\{\overline{g}_n\right\}_n$ is a Cauchy sequence in $\dot{\HHH}^1_{\eta}$. Since, by Proposition \ref{P:1},
$\dot{\HHH}^1_{\eta}$ is complete, we obtain that it has a limit $\overline{g}$ in $\dot{\HHH}^1_{\eta}$.

Let $\eps>0$. Choose $n$ such that 
\begin{equation}
 \label{A12}
 \left\|v_{0,n}-v_{0}\right\|_{\dot{H}^1}^2+\left\|v_{1,n}-v_1\right\|_{L^2}^2<\eps.
\end{equation} 
By conservation of the energy and \eqref{A7} in Lemma \ref{L:4}, we deduce
\begin{equation}
 \label{A13}
 \lim_{t\to\infty} \left\| \frac{1}{r}v(t)\right\|_{L^2}^2+\left\|\frac{1}{r}\nabla_{\omega} v(t)\right\|^2_{L^2}<C\eps,
\end{equation} 
which yields \eqref{A4}, letting $\eps\to 0$.

Fixing again $\eps$ and $n$ such that \eqref{A12} holds, and 
\begin{equation}
 \label{A13'}
 \left\|\overline{g}_n-\overline{g}\right\|^2_{\dot{\HHH}^1_{\eta}}<\eps,
\end{equation} 
we obtain, by \eqref{A12}, conservation of the energy and \eqref{A4},
\begin{equation}
 \label{A14}
 \limsup_{t\to\infty}
 \int_{[0,\infty)\times S^{N-1}}\left|\frac{\partial}{\partial r}\left( r^{\frac{N-1}{2}}v_n-r^{\frac{N-1}{2}}v \right)\right|^2+
\left|\frac{\partial}{\partial t}\left( r^{\frac{N-1}{2}}v_n-r^{\frac{N-1}{2}}v \right)\right|^2\,dr d\omega<\eps.
 \end{equation} 
 Hence, by the definition of $g_n$,
 \begin{equation}
  \label{A15}
  \limsup_{t\to\infty}
  \int_{[0,\infty)\times S^{N-1}}\left|\partial_{r,t}\left( g_n(r-t,\omega)-r^{\frac{N-1}{2}}v(t,r\omega) \right)\right|^2\,dr d\omega<\eps.
 \end{equation} 
 By \eqref{A13'} we can replace $g_n$ by $g$ in \eqref{A15} (changing $\eps$ into $4\eps$ in the right-hand side). Letting $\eps\to 0$, we obtain \eqref{A1}.
 
 Using that 
 \begin{equation*}
 \lim_{n\to\infty} E_{\lin}(v_{0,n},v_{1,n})=E_{\lin}(v_0,v_1)
 \text{ and }
  \lim_{n\to\infty} \left\|\overline{g}_n\right\|_{\dot{\HHH}^1_{\eta}}=\left\|\overline{g}\right\|_{\dot{\HHH}^1_{\eta}}
 \end{equation*}
we see that Lemma \ref{L:6} implies the energy identity \eqref{A2}. Of course \eqref{A2} implies the uniqueness of $\overline{g}$ and that the map $(v_0,v_1)\mapsto \sqrt{2}\overline{g}$ is an isometry from $\dot{H}^1\times L^2$ to $\HHH^1_{\eta}$.
\subsubsection{Proof of the surjectivity}
We next let $\overline{g}\in \dot{\HHH}^1_{\eta}$ and construct $v$ satisfying \eqref{E3} and such that \eqref{A1} holds. Since $E_{\lin}(v_0,v_1)=\|\overline{g}\|^2_{\dot{\HHH}^1_{\eta}}$, we see that the image of the map $(v_0,v_1)\mapsto \overline{g}$ is closed in $\dot{\HHH}^1_{\eta}$. Using the density of $C^{\infty}_0(\RR\times S^{N-1})$ in $\dot{\HHH}^1_{\eta}$, we see that it is sufficient to prove the existence of $v$ for $g\in C^{\infty}_0(\RR\times S^{N-1})$.

We look for $v$ of the form
\begin{equation}
 \label{A16}
 v(t,x)=\frac{1}{r^{\frac{N-1}{2}}} g(t-r,\omega)+\epsilon(t,x),
\end{equation} 
for large $t$, with $\omega=x/|x|$, and 
\begin{equation}
 \label{A16'}
 \lim_{t\to\infty} \left|(\epsilon,\partial_t\epsilon)(t)\right\|_{\hdot\times L^2}=0.
\end{equation}
Since 
$$(\partial_t^2-\Delta)\left( \frac{1}{r^{\frac{N-1}{2}}} g(t-r,\omega) \right)=\frac{N^2-4N+3}{4r^{\frac{N+3}{2}}} g(t-r,\omega)-\frac{1}{r^{\frac{N+3}{2}}}\Delta_{\omega}g(t-r,\omega)=:H(t,x),$$
the equation \eqref{E3} is equivalent to 
\begin{equation*}
(\partial_t^2-\Delta)\epsilon=-H.
\end{equation*}
Let $L\gg 1$ such that $\supp g\subset [-L,L]\times S^{N-1}$. Then 
$H$ is in $L^1\left([L+1,+\infty),L^2(\RR^N)\right)$. Indeed, if $t\geq L+1$,
\begin{multline*}
 \left\|\frac{1}{r^{\frac{N+3}{2}}}g(t-r,\omega)\right\|^2_{L^2(\RR^N)}+\left\| \frac{1}{r^{\frac{N+3}{2}}}\Delta_{\omega}g(t-r,\omega)\right\|^2_{L^2(\RR^N)}\\
 \lesssim \int_{t-L}^{t+L} \frac{1}{r^{N+3}}r^{N-1}\,dr\lesssim \frac{1}{(t-L)^3}-\frac{1}{(t+L)^3}\approx \frac{1}{t^4} \text{ as }t\to\infty.
\end{multline*}
Letting 
$$\epsilon(t,x)=\int_t^{+\infty}\frac{\sin\left( (t-s)\sqrt{-\Delta} \right)}{\sqrt{-\Delta}}H(s,x)\,ds,$$
we obtain $\epsilon$ satisfying \eqref{A16'} and such that $v$, defined by \eqref{A16}, is a solution of \eqref{E3}. 

By \eqref{A16} and \eqref{A16'},
$$\left\|\partial_{r,t}\left( r^{\frac{N-1}{2}}v-g(t-r,\omega) \right)\right\|_{L^2}=\left\|\partial_{t,r}\left( r^{\frac{N-1}{2}}\eps \right)\right\|_{L^2}\underset{t\to\infty}{\longrightarrow}0,$$
which concludes the proof. We used that 
$$\int_0^{+\infty}\left(\partial_r\left( r^{\frac{N-1}{2}}\epsilon \right)\right)^2\,dr=\int_0^{+\infty}\left( \partial_r\epsilon \right)^2r^{N-1}\,dr$$
if $\epsilon\in \dot{H}^1(\RR^N)$.

\subsection{Radiation fields and channels of energy}
\label{A:functionspace}
We conclude this appendix with a remark on the relation between exterior energy estimates (see Proposition 2.8 of \cite{DuKeMe12}) and radiation fields. 
Assume that $N$ is odd. It follows from the explicit formula for the solution of the wave equation in term of spherical means that the radiation field $G$ of a solution $v$ of the linear wave equation with initial data $(v_0,v_1)\in \hdot\times L^2$ has the following symmetry properties:
\begin{itemize}
 \item if $v_1=0$ then $G(\eta,\omega)=(-1)^{\frac{N+1}{2}}G(-\eta,-\omega)$.
 \item if $v_0=0$ then $G(\eta,\omega)=(-1)^{\frac{N-1}{2}}G(-\eta,-\omega)$.
\end{itemize}
(see for example the asymptotic formulas in the proof of Lemma 2.9 in \cite{DuKeMe12}). In both cases, we obtain
$$\lim_{t\to+\infty} \int_{|x|\geq |t|} |\nabla_{t,x} v(t,x)|^2\,dx=\int_{\RR}\int_{S^{N-1}} |G(\eta,\omega)|^2\,d\omega\,d\eta=
\frac 12\int|\nabla v_0(x)|^2+|v_1(x)|^2\,dx,$$
which yields the exterior energy estimate for the linear wave equation in odd space dimension (see Proposition 2.8 in \cite{DuKeMe12}).
\section{Study of a function space}
\label{S:funct}
We prove here Proposition \ref{P:1}.
\subsection{Completeness}
Let $\big\{\overline{g}_n\big\}_n$ be a Cauchy sequence in $\dot{\HHH}^1_{\eta}$. Replacing $g_n(\eta,\omega)$ by $g_n(\eta,\omega)-g_n(0,\omega)$ we can always assume $$\forall \omega\in S^{N-1},\quad g_n(0,\omega)=0.$$
As a consequence, if $\eta\in \RR$ and $n,p\in \NN$,
\begin{multline*}
\int_{S^{N-1}}\left|g_n(\eta,\omega)-g_p(\eta,\omega)\right|^2\,d\omega 
=\int_{S^{N-1}}\left|\int_0^{\eta} \left(\partial_{\eta}g_n(\eta',\omega)-\partial_{\eta}g_p(\eta',\omega)\right)d\eta'\right|^2\,d\omega \\
\leq |\eta|\int_{S^{N-1}}\int_0^{\eta} \left(\partial_{\eta}g_n(\eta',\omega)-\partial_{\eta}g_p(\eta',\omega)\right)^2d\eta'\,d\omega.
\end{multline*}
Thus, for all $M>0$, the sequence $\big\{g_n\big\}_n$ is a Cauchy sequence in $C^0\left( [-M,+M],L^2(S^{N-1}) \right)$. As a consequence, there exists $g\in C^0\left( \RR,L^2(S^{N-1}) \right)$ such that $\big\{g_n\big\}_n$ converges to $g$ locally in $L^{\infty}\left(\RR,L^2(S^{N-1}) \right)$.

Since $\big\{\partial_{\eta}g_n\big\}_n$ is a Cauchy sequence in $L^2\left( \RR\times S^{N-1} \right)$, it converges to some $h\in L^2\left( \RR\times S^{N-1} \right)$ in this space. Letting $n\to\infty$ in the equality
$$g_n(\eta,\omega)=\int_0^{\eta} \partial_{\eta}g_n(\eta',\omega)\,d\eta'\quad \text{for a.a. }\omega\in S^{N-1}$$
at fixed $\eta$, and taking the limit in $L^2(S^{N-1})$, we see that 
$$g(\eta,\omega)=\int_0^{\eta}h(\eta',\omega)\,d\eta'  \quad \text{for a.a. }\omega\in S^{N-1}.$$
Thus $\partial_{\eta}g=h\in L^2\left(\RR\times S^{N-1}\right)$ and, by the definition of $h$,
$$\lim_{n\to\infty}\left\| \partial_{\eta}g-\partial_{\eta}g_n\right\|_{L^2\left( \RR\times S^{N-1} \right)}=0.$$
\subsection{Density of compactly supported, smooth functions}
\EMPH{Step 1} We let $\varphi\in C^{\infty}_0(\RR)$ such that $\varphi(\eta)=1$ if $|\eta|\leq 1$ and $\varphi(\eta)=0$ if $|\eta|\geq 2$. Let $g\in \dot{H}^{1}_{\eta}$. In this step we prove:
$$ \lim_{R\to\infty} \left\|\frac{\partial}{\partial\eta}\left( g-\varphi\left( \frac{\cdot}{R} \right)g \right)\right\|_{L^2(\RR\times S^{N-1})}=0.$$
Indeed,
\begin{multline}
 \label{A46}
 \left\|\frac{\partial}{\partial\eta}\left( g-\varphi\left( \frac{\cdot}{R} \right)g \right)\right\|_{L^2(\RR\times S^{N-1})}\\
 \leq \left\|\left( 1-\varphi\left( \frac{\cdot}{R} \right)\right)\partial_{\eta}g \right\|_{L^2(\RR\times S^{N-1})}+\frac{1}{R}\left\|\varphi'\left( \frac{\cdot}{R}\right)g\right\|_{L^2(\RR\times S^{N-1})}.
\end{multline} 
The first term of the right-hand side goes to $0$ as $R$ goes to infinity by dominated convergence. We next treat the second term. We have:
\begin{equation}
 \label{A47}
 \frac{1}{R^2}\left\|\varphi'\left( \frac{\cdot}{R} \right)g\right\|_{L^2}^2\leq \frac{C}{R^2}\int_{R}^{2R} \int_{S^{N-1}} |g(\eta,\omega)|^2,d\omega d\eta.
\end{equation} 
Let $A\gg 1$. Then, for $\eta>A$,
$$ g(\eta,\omega)=g(A,\omega)+\int_A^{\eta} \partial_{\eta}g(\eta',\omega)\,d\eta'\quad \text{for a.a. }\omega\in S^{N-1}.$$
Hence, 
$$|g(\eta,\omega)|^2\leq 2\eta \int_{A}^{+\infty} |\partial_{\eta} g(\eta',\omega)|^2\,d\eta'+2|g(A,\omega)|^2\quad \text{for a.a. }\omega\in S^{N-1}.$$
Integrating on $S^{N-1}$, we obtain
$$\int_{S^{N-1}} |g(\eta,\omega)|^2\,d\omega\leq 2\eta \left\|\partial_{\eta} g\right\|^2_{L^2\left([A,+\infty)\times S^{N-1}\right)}+2\left\|g(A,\cdot)\right\|^2_{L^2(S^{N-1})}.$$
For $R>A$, we have 
$$\frac{1}{R^2}\int_{R}^{2R}\int_{S^{N-1}} |g(\eta,\omega)|^2\,d\omega d\eta\leq 4\left\|\partial_{\eta}g\right\|^2_{L^2([A,+\infty)\times S^{N-1})}+\frac{2}{R}\left\|g(A,\cdot)\right\|^2_{L^2(S^{N-1})}.$$
Letting $R\to \infty$, we obtain
$$\limsup_{R\to\infty} \frac{1}{R^2}\int_{R}^{2R} \int_{S^{N-1}}|g(\eta,\omega)|^2\,d\omega d\eta\leq 4\left\|\partial_{\eta}g\right\|^2_{L^2([A,+\infty)\times S^{N-1})}$$
and thus, since $A$ is arbitrarily large,
$$\lim_{R\to\infty} \frac{1}{R^2}\int_{R}^{2R} \int_{S^{N-1}}|g(\eta,\omega)|^2\,d\omega d\eta=0.$$
In view of \eqref{A46}, \eqref{A47}, this concludes Step 1.

\EMPH{Step 2} It remains to prove that a compactly supported function $g\in C^{0}\left(\RR,L^2(S^{N-1})\right)$ such that $\partial_{\eta} g\in L^2(\RR\times S^{N-1})$ can be approximated in the $\dot{\HHH}^1_{\eta}$ norm by smooth functions. This can be done using convolution with approximations of the identity (in the variable $\eta$) and projecting on the $n$ first eigenspaces of the Laplace-Beltrami operator $\Delta_{\omega}$ on $S^{N-1}$. We leave the details to the reader. 
\subsection{Isometry with $L^2$}
Let $\Phi: \overline{g}\mapsto \partial_{\eta}g$. It follows obviously from the definition of $\dot{H}^1_{\eta}$ that $\Phi$ is a well-defined, injective isometry. If $G\in L^2(\RR\times S^{N-1})$, then
$$ g(\eta,\omega)=\int_0^{\eta} G(\omega,\eta')\,d\omega$$
is an element of $\dot{H}^1_{\eta}$ which satisfies $\Phi(\overline{g})=G$, which proves that the map is also surjective. The proof of Proposition \ref{P:1} is complete.

\section{Profiles and estimates on Strichartz norms}
Recall that $S(\RR)=L^{\frac{N+2}{N-2}}\left(\RR,L^{\frac{2(N+2)}{N-2}}(\RR^N)\right)$.
\begin{claim}
 \label{Cl:A1}
 Let $\left\{(u_{0,n},u_{1,n})\right\}_n$ be a sequence in $\hdot\times L^2$ such that
 \begin{equation}
  \label{EA1}
  \limsup_{n\to\infty} \left\|(u_{0,n},u_{1,n})\right\|_{\hdot\times L^2}=M.
 \end{equation} 
 Assume furthermore
 \begin{equation}
  \label{EA2}
  \forall j,\quad \left\| U^j_{\lin}\right\|_{S(\RR)}\leq \delta_0.
 \end{equation} 
 Then 
 $$ \limsup_{n\to\infty} \left\| S_{\lin}(\cdot)(u_{0,n},u_{1,n})\right\|_{S(\RR)} \leq 
 \begin{cases}
 C M^{\frac{3}{4}}\delta_0^{\frac{1}{4}}& \text{ if } N=3\\
 C M^{\frac{2N-4}{N+2}}\delta_0^{\frac{6-N}{N+2}}&\text{ if } N=4,5.
 \end{cases}
 $$
\end{claim}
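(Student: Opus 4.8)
The plan is to combine the profile decomposition recalled in \S\ref{SS:profiles}, orthogonality of the profiles in a \emph{pure} space-time Lebesgue norm, and the interpolation inequalities \eqref{EC1}--\eqref{EC2} of \S\ref{S:sing_reg} (which hold verbatim with $\RR\times\RR^N$ in place of $\Omega_j$) to return to the mixed norm $S(\RR)$. After extracting, fix a profile decomposition \profiles of $\{(u_{0,n},u_{1,n})\}_n$ and set $v_n:=S_{\lin}(\cdot)(u_{0,n},u_{1,n})$, so that $v_n=\sum_{j=1}^J U^j_{\lin,n}+w_n^J$ for every $J$, and let $p:=\frac{2(N+1)}{N-2}$. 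Then $L^p(\RR^{N+1})$ is the scaling-invariant admissible Strichartz space for \eqref{E3}, so $\|U^j_{\lin,n}\|_{L^p(\RR^{N+1})}=\|U^j_{\lin}\|_{L^p(\RR^{N+1})}$, and since $p\geq 2$ for $N\in\{3,4,5\}$ the profiles are pairwise asymptotically orthogonal in $L^p(\RR^{N+1})$; in particular $\|U^j_{\lin,n}U^k_{\lin,n}\|_{L^{p/2}(\RR^{N+1})}\to 0$ for $j\neq k$, hence $\limsup_n\big\|\sum_{j=1}^J U^j_{\lin,n}\big\|_{L^p(\RR^{N+1})}^p=\sum_{j=1}^J\|U^j_{\lin}\|_{L^p(\RR^{N+1})}^p$, exactly as in \cite{BaGe99}.

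First I would bound $\Lambda:=\limsup_n\|v_n\|_{L^p(\RR^{N+1})}$. One checks that $L^p(\RR^{N+1})$ interpolates between $S(\RR)$ and $L^\infty(\RR,L^{2N/(N-2)})$ with weights $\alpha:=\frac{N+2}{2(N+1)}$ and $1-\alpha$. Combined with the Sobolev embedding $\hdot\hookrightarrow L^{2N/(N-2)}$, conservation of the linear energy, the bound $\limsup_n\|\vec w_n^J\|_{\hdot\times L^2}\lesssim M$ coming from the Pythagorean expansion, and $\lim_{J\to\infty}\limsup_n\|w_n^J\|_{S(\RR)}=0$, this yields $\lim_{J\to\infty}\limsup_n\|w_n^J\|_{L^p(\RR^{N+1})}=0$; together with the triangle inequality and the orthogonality, letting $J\to\infty$ gives $\Lambda^p\leq\sum_{j\geq 1}\|U^j_{\lin}\|_{L^p(\RR^{N+1})}^p$. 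Each profile satisfies two bounds: $\|U^j_{\lin}\|_{L^p}\lesssim\|\vec U^j_{\lin}(0)\|_{\hdot\times L^2}$ by Strichartz, and, by the same interpolation together with \eqref{EA2}, $\|U^j_{\lin}\|_{L^p}\lesssim\|U^j_{\lin}\|_{S(\RR)}^{\alpha}\|\vec U^j_{\lin}(0)\|^{1-\alpha}\leq\delta_0^{\alpha}\|\vec U^j_{\lin}(0)\|^{1-\alpha}$. Writing $\|U^j_{\lin}\|_{L^p}^p=\big(\|U^j_{\lin}\|_{L^p}^p\big)^{\beta}\big(\|U^j_{\lin}\|_{L^p}^p\big)^{1-\beta}$ with $\beta:=\min\{1,\tfrac{6}{N+2}\}$, bounding the first factor by the interpolation bound and the second by Strichartz, one computes that the exponent $p-\alpha p\beta$ of $\|\vec U^j_{\lin}(0)\|$ equals $3$ if $N=3$ and $2$ if $N\in\{4,5\}$, hence $\geq 2$ in all cases. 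Using $\sup_j\|\vec U^j_{\lin}(0)\|\lesssim M$ and the Pythagorean bound $\sum_j\|\vec U^j_{\lin}(0)\|^2\lesssim M^2$ we get $\sum_{j\geq 1}\|U^j_{\lin}\|_{L^p}^p\lesssim\delta_0^{\alpha p\beta}M^{\,p-\alpha p\beta}$, i.e. $\Lambda\lesssim\delta_0^{\alpha\beta}M^{1-\alpha\beta}$, which is $\delta_0^{5/8}M^{3/8}$ if $N=3$ and $\delta_0^{3/(N+1)}M^{(N-2)/(N+1)}$ if $N\in\{4,5\}$.

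Finally, I would interpolate $S(\RR)$ between $L^p(\RR^{N+1})$ and the auxiliary admissible pair on which $v_n$ is bounded by $\lesssim M$ via Strichartz: this is \eqref{EC1}, namely $\|v_n\|_{S(\RR)}\leq\|v_n\|_{L^4_tL^{12}_x}^{3/5}\|v_n\|_{L^8(\RR^4)}^{2/5}$, if $N=3$, and \eqref{EC2} with $\theta=\frac{(N+1)(6-N)}{3(N+2)}$ if $N\in\{4,5\}$. Passing to a subsequence realizing $\limsup_n\|v_n\|_{S(\RR)}$ and extracting once more so that both factors converge, I would insert the Strichartz bound $\limsup_n\|v_n\|_{L^4_tL^{12}_x}\lesssim M$ (resp. its analogue for $N\in\{4,5\}$) and the estimate of $\Lambda$ above; computing the resulting exponents gives $\limsup_n\|v_n\|_{S(\RR)}\lesssim M^{3/4}\delta_0^{1/4}$ for $N=3$ and $\lesssim M^{\frac{2N-4}{N+2}}\delta_0^{\frac{6-N}{N+2}}$ for $N\in\{4,5\}$, which is the claim. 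The main obstacle is the $L^p(\RR^{N+1})$-orthogonality of the profiles, but this is the standard Bahouri--G\'erard fact (reducing, after the usual change of variables and a density argument, to $\|U^j_{\lin,n}U^k_{\lin,n}\|_{L^{p/2}(\RR^{N+1})}\to 0$ for $j\neq k$); the rest is bookkeeping of interpolation exponents, with the single real subtlety that for $N=5$ one must interpolate the power $\|U^j_{\lin}\|_{L^p}^p$ rather than the norm itself, since $\frac{N}{N-2}<2$ prevents $\sum_j\|\vec U^j_{\lin}(0)\|^{N/(N-2)}$ from being directly summable.
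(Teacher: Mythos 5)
Your proof is correct and follows essentially the same route as the paper: a Pythagorean expansion of the $L^{\frac{2(N+1)}{N-2}}_{t,x}$ norm over the profiles, a per-profile bound obtained by the Hölder interpolation $\|U^j_{\lin}\|_{L^p}\lesssim\|U^j_{\lin}\|_{S(\RR)}^{\alpha}\|\vec U^j_{\lin}(0)\|_{\hdot\times L^2}^{1-\alpha}$ combined with Strichartz (which is exactly the paper's \eqref{EA4}--\eqref{EA5}, your $\beta$-parameter repackaging the paper's extra Strichartz trade for $N=5$), summation via the energy Pythagorean bound, and a final Hölder/Strichartz interpolation back to $S(\RR)$ as in \eqref{EC1}--\eqref{EC2}. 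The exponents you compute agree with the paper's, and the only cosmetic difference is that you rederive the asymptotic orthogonality of the profiles in $L^p_{t,x}$ whereas the paper invokes it directly from \cite{BaGe99}.
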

\begin{proof}
 We use the following Pythagorean expansion:
 \begin{equation}
  \label{EA3}
  \lim_{n\to\infty} \left\|S_{\lin}(\cdot)(u_{0,n},u_{1,n})\right\|^{\frac{2(N+1)}{N-2}}_{L^{\frac{2(N+1)}{N-2}}(\RR\times\RR^N)}=\sum_{j\geq 1}\left\|U^j_{\lin}\right\|^{\frac{2(N+1)}{N-2}}_{L^{\frac{2(N+1)}{N-2}}(\RR\times\RR^N)}.
 \end{equation} 
 Next, we notice that by H\"older and Sobolev inequality,
 \begin{equation}
  \label{EA4}
  \left\|U^j_{\lin}\right\|^{\frac{2(N+1)}{N-2}}_{L^{\frac{2(N+1)}{N-2}}(\RR\times\RR^N)}\leq \left\|U_{\lin}^j\right\|_{L^{\infty}\left(\RR,L^{\frac{2N}{N-2}}\right)}^{\frac{N}{N-2}}\left\|U_{\lin}^j\right\|_{S(\RR)}^{\frac{N+2}{N-2}}.
 \end{equation} 
 If $N=3$, we deduce
 \begin{equation*}
  \left\|U_{\lin}^j\right\|^8_{L^8_{t,x}}\leq C\left\|\vU_{\lin}^j(0)\right\|^2_{\hdot\times L^2}M\delta_0^5.
 \end{equation*} 
 Hence, by the Pythagorean expansion \eqref{EA3},
 \begin{equation*}
  \limsup_{n\to \infty}
  \left\|S_{\lin}(\cdot)(u_{0,n},u_{1,n})\right\|^8_{L^8_{t,x}}\leq CM\delta_0^5 \sum_{j\geq 1}\left\|\vU_{\lin}^j(0)\right\|^2_{\hdot\times L^2}\leq C\,M^3\delta_0^5.
 \end{equation*} 
 By H\"older's inequality, then Strichartz inequality
 \begin{align*}
  \left\|S_{\lin}(\cdot)(u_{0,n},u_{1,n})\right\|_{L^5_tL^{10}_x}&\leq \left\|S_{\lin}(\cdot)(u_{0,n},u_{1,n})\right\|_{L^4_tL^{12}_x}^{3/5}\left\|S_{\lin}(\cdot)(u_{0,n},u_{1.n})\right\|_{L^8_{t,x}}^{\frac{2}{5}}\\
  \limsup_{n\to\infty}\left\|S_{\lin}(\cdot)(u_{0,n},u_{1,n})\right\|_{L^5_tL^{10}_x}&\leq C M^{\frac 35} M^{\frac{3}{8}\times \frac{2}{5}}\delta_0^{\frac{5}{8}\times \frac{2}{5}}=C M^{\frac{3}{4}}\delta_0^{\frac 14}.
 \end{align*}
If $N\in \{4,5\}$, we combine \eqref{EA4} with Strichartz and Sobolev inequalities and obtain
\begin{equation}
 \label{EA5}
 \left\|U^j_{\lin}\right\|_{L^{\frac{2(N+1)}{N-2}}_{t,x}}^{\frac{2(N+1)}{N-2}}\leq C\left\|\vU^j_{\lin}(0)\right\|_{\hdot\times L^2}^2\left\|U^j_{\lin}\right\|_{S(\RR)}^{\frac{6}{N-2}}\leq C\left\|\vU^j_{\lin}(0)\right\|^2_{\hdot\times L^2} \delta_0^{\frac{6}{N-2}}.
\end{equation} 
Summing up, we deduce
\begin{equation}
 \label{EA6}
 \sum_{j\geq 1} \left\|U_{\lin}^j\right\|_{L^{\frac{2(N+1)}{N-2}}_{t,x}}^{\frac{2(N+1)}{N-2}}\leq C M^2\delta_0^{\frac{6}{N-2}}.
\end{equation} 
Hence
\begin{equation}
 \label{EA7}
 \limsup_{n\to \infty} \left\| S_{\lin}(\cdot)(u_{0,n},u_{1,n})\right\|_{L^{\frac{2(N+1)}{N-2}}_{t,x}}^{\frac{2(N+1)}{N-2}} \leq C M^2\delta_0^{\frac{6}{N-2}}.
\end{equation} 
By H\"older's inequality,
\begin{equation}
 \left\| S_{\lin}(\cdot)(u_{0,n},u_{1,n})\right\|_{S(\RR)}\leq  \left\|S_{\lin}(\cdot)(u_{0,n},u_{1,n})\right\|_{L^{\frac{2(N+1)}{N-2}}_{t,x}}^{\theta}\left\| S_{\lin}(\cdot)(u_{0,n},u_{1,n})\right\|_{L^2\left(\RR,L^{\frac{2N}{N-3}}\right)}^{1-\theta},
\end{equation} 
where $\theta=\frac{(N+1)(6-N)}{3(N+2)}$, $1-\theta=\frac{N(N-2)}{3(N+2)}$.

Hence
$$\left\| S_{\lin}(\cdot)(u_{0,n},u_{1,n})\right\|_{S(\RR)}\leq C\delta_0^{\frac{6-N}{N+2}}M^{\frac{2N-4}{N+2}}.$$
\end{proof}
\section{Dispersion for solution with small $e_1$-norm}
In this appendix we prove Claim \ref{Cl:E15}. We argue by contradiction. Assume that for all $n>0$, there exists $(v_{0,n},v_{1,n})\in \hdot\times L^2$ such that 
\begin{equation}
 \label{EA8}
 \left\|(v_0^n,v_1^n)\right\|_{e_1}\leq \frac 1n, \quad \left\|(v_{0,n},v_{1,n})\right\|_{\hdot\times L^2}\leq M
\end{equation} 
and 
\begin{equation}
\label{EA9}
\left\|v_{\lin}^n\right\|_{S(\RR)}>\beta,     
\end{equation} 
where $v_{\lin}^n(t)=S_{\lin}(t)\left( v_0^n,v_1^n \right)$. Extracting subsequences, we can assume that $\left\{(v_0^n,v_1^n)\right\}$ has a profile decomposition \profiles. By the Pythagorean expansion of the norm $\|\cdot\|_{e_1}$ (see Remark \ref{R:E14'''}), for all $J\geq 1$,
\begin{equation*}
 \frac{1}{n}\geq\left\|(v_0^n,v_1^n)\right\|_{e_1}^2=\sum_{j=1}^J \left\|\vU_{\lin}^j(0)\right\|^2_{e_1}+\left\|w_n^J(0)\right\|^2_{e_1}+o(1)
\end{equation*} 
as $n\to \infty$. As a consequence, $U^j_{\lin}=0$ for all $j\geq 1$ and $\lim_n \|v_{\lin}^n\|_{S(\RR)}=0$, which contradicts \eqref{EA9}.
\bibliographystyle{acm}
\bibliography{toto}

\begin{thebibliography}{10}

\bibitem{Alinhac09Bo}
{\sc Alinhac, S.}
\newblock {\em Hyperbolic partial differential equations}.
\newblock Universitext. Springer, Dordrecht, 2009.

\bibitem{BaGe99}
{\sc Bahouri, H., and G{\'e}rard, P.}
\newblock High frequency approximation of solutions to critical nonlinear wave
  equations.
\newblock {\em Amer. J. Math. 121}, 1 (1999), 131--175.

\bibitem{BaSh98}
{\sc Bahouri, H., and Shatah, J.}
\newblock Decay estimates for the critical semilinear wave equation.
\newblock {\em Ann. Inst. H. Poincar\'e Anal. Non Lin\'eaire 15}, 6 (1998),
  783--789.

\bibitem{Bulut10}
{\sc Bulut, A.}
\newblock Maximizers for the {S}trichartz inequalities for the wave equation.
\newblock {\em Differential Integral Equations 23}, 11/12 (2010), 1035--1072.

\bibitem{CoKeLaSc15P}
{\sc C\^ote, R., Kenig, C.~E., Lawrie, A., and Schlag, W.}
\newblock Profiles for the radial focusing 4d energy-critical wave equation.
\newblock ArXiv preprint:1402.2307.

\bibitem{CoKeLaSc15b}
{\sc C{\^o}te, R., Kenig, C.~E., Lawrie, A., and Schlag, W.}
\newblock Characterization of large energy solutions of the equivariant wave
  map problem: {II}.
\newblock {\em Amer. J. Math. 137}, 1 (2015), 209--250.

\bibitem{Ding86}
{\sc Ding, W.~Y.}
\newblock On a conformally invariant elliptic equation on {${\bf R}^n$}.
\newblock {\em Comm. Math. Phys. 107}, 2 (1986), 331--335.

\bibitem{DuJiKeMe16P}
{\sc Duyckaerts, T., Jia, H., Kenig, C.~E., and Merle, F.}
\newblock Soliton resolution along a sequence of times for the focusing
  energy-critical wave equation, 2016.

\bibitem{DuKeMe12b}
{\sc Duyckaerts, T., Kenig, C., and Merle, F.}
\newblock Profiles of bounded radial solutions of the focusing, energy-critical
  wave equation.
\newblock {\em Geom. Funct. Anal. 22}, 3 (2012), 639--698.

\bibitem{DuKeMe12}
{\sc Duyckaerts, T., Kenig, C., and Merle, F.}
\newblock Universality of the blow-up profile for small type {II} blow-up
  solutions of the energy-critical wave equation: the nonradial case.
\newblock {\em J. Eur. Math. Soc. (JEMS) 14}, 5 (2012), 1389--1454.

\bibitem{DuKeMe13}
{\sc Duyckaerts, T., Kenig, C., and Merle, F.}
\newblock Classification of radial solutions of the focusing, energy-critical
  wave equation.
\newblock {\em Cambridge Journal of Mathematics 1}, 1 (2013), 75--144.

\bibitem{DuKeMe15Pb}
{\sc Duyckaerts, T., Kenig, C., and Merle, F.}
\newblock Concentration-compactness and universal profiles for the non-radial
  energy critical wave equation.
\newblock ArXiv preprint:1510.01750, 2015.

\bibitem{Friedlander62}
{\sc Friedlander, F.~G.}
\newblock On the radiation field of pulse solutions of the wave equation.
\newblock {\em Proc. Roy. Soc. Ser. A 269\/} (1962), 53--65.

\bibitem{Friedlander80}
{\sc Friedlander, F.~G.}
\newblock Radiation fields and hyperbolic scattering theory.
\newblock {\em Math. Proc. Cambridge Philos. Soc. 88}, 3 (1980), 483--515.

\bibitem{GiSoVe92}
{\sc Ginibre, J., Soffer, A., and Velo, G.}
\newblock The global {C}auchy problem for the critical nonlinear wave equation.
\newblock {\em J. Funct. Anal. 110}, 1 (1992), 96--130.

\bibitem{GiVe95}
{\sc Ginibre, J., and Velo, G.}
\newblock Generalized {S}trichartz inequalities for the wave equation.
\newblock {\em J. Funct. Anal. 133}, 1 (1995), 50--68.

\bibitem{Grillakis90b}
{\sc Grillakis, M.~G.}
\newblock Regularity and asymptotic behaviour of the wave equation with a
  critical nonlinearity.
\newblock {\em Ann. of Math. (2) 132}, 3 (1990), 485--509.

\bibitem{Grillakis92}
{\sc Grillakis, M.~G.}
\newblock Regularity for the wave equation with a critical nonlinearity.
\newblock {\em Comm. Pure Appl. Math. 45}, 6 (1992), 749--774.

\bibitem{JiaLiuXu14P}
{\sc Jia, H., Liu, B., and Xu, G.}
\newblock Long time dynamics of defocusing energy critical 3+ 1 dimensional
  wave equation with potential in the radial case.
\newblock {\em arXiv preprint arXiv:1403.5696\/} (2014).

\bibitem{Kapitanski94}
{\sc Kapitanski, L.}
\newblock Global and unique weak solutions of nonlinear wave equations.
\newblock {\em Math. Res. Lett. 1}, 2 (1994), 211--223.

\bibitem{KeMe08}
{\sc Kenig, C.~E., and Merle, F.}
\newblock Global well-posedness, scattering and blow-up for the energy-critical
  focusing non-linear wave equation.
\newblock {\em Acta Math. 201}, 2 (2008), 147--212.

\bibitem{LaPh89}
{\sc Lax, P.~D., and Phillips, R.~S.}
\newblock {\em Scattering theory}, second~ed., vol.~26 of {\em Pure and Applied
  Mathematics}.
\newblock Academic Press Inc., Boston, MA, 1989.
\newblock With appendices by Cathleen S. Morawetz and Georg Schmidt.

\bibitem{Levine74}
{\sc Levine, H.~A.}
\newblock Instability and nonexistence of global solutions to nonlinear wave
  equations of the form {$Pu_{tt}=-Au+\mathcal{F}(u)$}.
\newblock {\em Trans. Amer. Math. Soc. 192\/} (1974), 1--21.

\bibitem{Nakanishi99b}
{\sc Nakanishi, K.}
\newblock Scattering theory for the nonlinear {K}lein-{G}ordon equation with
  {S}obolev critical power.
\newblock {\em Internat. Math. Res. Notices}, 1 (1999), 31--60.

\bibitem{ShSt93}
{\sc Shatah, J., and Struwe, M.}
\newblock Regularity results for nonlinear wave equations.
\newblock {\em Ann. of Math. (2) 138}, 3 (1993), 503--518.

\bibitem{ShSt94}
{\sc Shatah, J., and Struwe, M.}
\newblock Well-posedness in the energy space for semilinear wave equations with
  critical growth.
\newblock {\em Internat. Math. Res. Notices}, 7 (1994), 303ff., approx.\ 7 pp.\
  (electronic).

\bibitem{Struwe88}
{\sc Struwe, M.}
\newblock Globally regular solutions to the {$u^5$} {K}lein-{G}ordon equation.
\newblock {\em Ann. Scuola Norm. Sup. Pisa Cl. Sci. (4) 15}, 3 (1988), 495--513
  (1989).

\bibitem{Tao04DPDE}
{\sc Tao, T.}
\newblock On the asymptotic behavior of large radial data for a focusing
  non-linear {S}chr\"odinger equation.
\newblock {\em Dyn. Partial Differ. Equ. 1}, 1 (2004), 1--48.

\bibitem{Tao07DPDE}
{\sc Tao, T.}
\newblock A (concentration-) compact attractor for high-dimensional non-linear
  schr{\"o}dinger equations.
\newblock {\em Dyn. Partial Differ. Equ. 4}, 1 (2007), 1--53.

\bibitem{Tao08DPDE}
{\sc Tao, T.}
\newblock A global compact attractor for high-dimensional defocusing non-linear
  {S}chr\"odinger equations with potential.
\newblock {\em Dyn. Partial Differ. Equ. 5}, 2 (2008), 101--116.

\end{thebibliography}
\end{document}